\numberwithin{equation}{section}
\DeclareMathOperator{\SL}{SL}
\DeclareMathOperator{\Gal}{Gal}
\DeclareMathOperator{\rank}{rank}
\DeclareMathOperator{\vol}{vol}
\DeclareMathOperator{\red}{red}
\DeclareMathOperator{\D}{\mathcal{D}}
\DeclareMathOperator{\Disc}{Disc}
\DeclareMathOperator{\Norm}{Norm}
\newcommand{\Q}{\mathbb{Q}}
\newcommand{\R}{\mathbb{R}}
\newcommand{\PP}{\mathbb{P}}
\newcommand{\Z}{\mathbb{Z}}
\newcommand{\F}{\mathbb{F}}
\renewcommand{\O}{\mathcal{O}}
\newcommand{\p}{\mathfrak{p}}
\newcommand{\n}{\mathfrak{n}}
\newcommand{\leg}[2]{\left(\frac{#1}{#2}\right)}
\newcommand{\OO}{\mathcal{O}}
\newcommand{\cI}{\mathcal{I}}
\newcommand{\cJ}{\mathcal{J}}
\newcommand{\ZZ}{\mathbb{Z}}
\newcommand{\QQ}{\mathbb{Q}}
\newcommand{\ve}{\varepsilon}
\newcommand{\y}{\mathbf{y}}
\newcommand{\0}{\mathbf{0}}
\newcommand{\FF}{\mathbb{F}}
\newcommand{\NN}{\mathbb{N}}
\newcommand{\cW}{\mathcal{W}}
\newcommand{\cP}{\mathcal{P}}
\newcommand{\cF}{\mathcal{F}}
\newcommand{\cU}{\mathcal{U}}
\renewcommand{\c}{\mathbf{c}}
\renewcommand{\v}{\mathbf{v}}
\renewcommand{\u}{\mathbf{u}}
\newcommand{\fp}{\mathfrak{p}}
\renewcommand{\epsilon}{\varepsilon}
\newtheorem{theorem}{Theorem}[section]
\newtheorem{lemma}[theorem]{Lemma}
\newtheorem{prop}[theorem]{Proposition}
\newtheorem{conjecture}[theorem]{Conjecture}
\theoremstyle{definition}
\newtheorem*{ack}{Acknowledgements}
\newtheorem{remark}[theorem]{Remark}
\newtheorem{definition}[theorem]{Definition}
\renewcommand{\leq}{\leqslant}
\renewcommand{\geq}{\geqslant}
\begin{document}

\author{Tim Browning}
\author{Stephanie Chan}
\address{IST Austria\\
Am Campus 1\\
3400 Klosterneuburg\\
Austria}
\email{tdb@ist.ac.at}
\email{stephanie.chan@ist.ac.at}

\title[Quadratic twists of an elliptic curve]{Almost all quadratic twists of an elliptic curve have no integral points}

\date{\today}

\begin{abstract}
For a given elliptic curve $E$ in 
short Weierstrass form, we show that almost all quadratic twists $E_D$ have no integral points, as $D$ ranges over square-free integers ordered by size. Our result is conditional 
on a weak form of the Hall--Lang conjecture in the case that $E$ has partial $2$-torsion. 
The proof uses a correspondence of Mordell and the reduction theory of binary quartic forms in order to transfer the problem to counting rational points of bounded height on a certain singular cubic surface, together with extensive use of cancellation in character sum estimates, drawn from  Heath-Brown's  analysis of Selmer group statistics for the congruent number curve.
\end{abstract}

\subjclass[2010]{11D45 (11D25, 11G05)}

\maketitle

\setcounter{tocdepth}{1}
\tableofcontents

\section{Introduction}

Given an elliptic curve defined over $\ZZ$, 
the aim of this paper is to establish the paucity of quadratic twists that contain integral points. 
Fixing $A$ and $B$ such that $4A^3+27B^2\neq 0$, 
let $E$ denote the elliptic curve 
$y^2=x^3+Ax+B$.  
We shall consider the quadratic twist family 
\begin{equation}\label{eq:jive}
E_D:y^2=x^3+AD^2x+BD^3,
\end{equation}
as $D$ runs over square-free integers.
Let $\D\coloneqq\{D\in\Z \text{ square-free}\}$ and let 
$$
\D(N)\coloneqq\{D\in\D:|D|\leq N\},
$$
for any $N\geq 1$.
Denote the set of integral points on $E_D$ by
\[
E_D(\Z)\coloneqq \{(x,y)\in\Z^2:y^2=x^3+AD^2x+BD^3\}.
\]
If $E$ has a rational $2$-torsion point and $\alpha$ is an integer root of the polynomial $x^3+Ax+B$, then every twist $E_D$ has an integral point $(\alpha D,0)$. 
Accordingly, we define 
\begin{equation}\label{eq:salsa}
E_D^*(\Z)\coloneqq E_D(\Z)\setminus E_D[2],
\end{equation}
where 
$E_D[2]$ is the $2$-torsion subgroup 
of $E_D(\Q)$.
Granville \cite{Granvilletwists} has conjectured asymptotics for the density of elliptic curves 
that have a non-trivial integral point in a quadratic twist family. (However, care should be taken when comparing the conjecture against our setting, since he considers the  model $Dy^2=x^3+Ax+B$, which contains fewer integral points than $E_D$.) In our setting, we expect that 
$$
\#\{D\in\D(N):E_D^*(\Z)\neq \varnothing\}\sim c_{A,B} N^{\frac{1}{2}},
$$ 
as $N\to \infty$, for some constant $c_{A,B}>0$ depending only on $A$ and $B$.
Our main goal is to show that the left hand side is $o(N)$.  
Chan \cite{congruentav} has achieved this for  the quadratic twist family of congruent number curves $y^2=x^3-D^2x$. Moreover, for a fixed square-free integer $k\neq 1$, 
Chan \cite{cubicav} has also proved that almost all elliptic curves in the cubic twist family of Mordell curves $y^2=x^3+kD^2$ have no integral point.

In the  setting of quadratic twists of an arbitrary elliptic curve $y^2=x^3+Ax+B$, 
we shall need to work under the following  hypothesis when the 
 curve has partial $2$-torsion. 

\begin{conjecture}[weak Hall--Lang]
\label{con}
Let $E$ be the elliptic curve $y^2=x^3+Ax+B$, where $A,B\in \ZZ$ are such that $4A^3+27B^2\neq 0$, 
and let  $(x,y)\in E(\ZZ)$. Then there exists constants $C,\ve>0$ such that  
$|x|\leq \exp\left(C\max\{|A|^{\frac{1}{2}},|B|^{\frac{1}{3}}\}^{1-\ve}\right). 
$
\end{conjecture}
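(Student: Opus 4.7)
The natural approach is Baker's method via linear forms in elliptic logarithms. Let $P=(x,y)\in E(\ZZ)$ with $|x|$ sufficiently large, and let $\psi\colon E(\RR)\to \RR/\Lambda_\infty$ denote the real elliptic logarithm. The standard expansion of $\psi$ near the identity gives $|\psi(P)|\asymp |x|^{-1/2}$. If $P_1,\dots,P_r$ generate $E(\QQ)/E(\QQ)_{\mathrm{tors}}$, we may write $P = T+\sum_{i=1}^r n_i P_i$ for some torsion point $T$ and integers $n_i$, so that $\psi(P)-\sum_i n_i\psi(P_i)$ lies in $\Lambda_\infty$. Lower bounds of David and Hirata-Kohno, refined by Gaudron--R\'emond, for nonvanishing linear forms in elliptic logarithms then yield an inequality of the shape
\[
-\log|\psi(P)|\ll (\log B)^{\kappa(r)}\cdot\prod_{i=1}^r \max\{1,\hat h(P_i)\}\cdot\log H,
\]
where $B=\max_i|n_i|$, $\kappa(r)$ depends only on $r$, and $H=\max\{|A|^{1/2},|B|^{1/3}\}$ is the naive height.

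To extract the target bound $\log|x|\leq CH^{1-\varepsilon}$ from this, I would combine three inputs: (i) Silverman's inequality $|\hat h(P)-\tfrac{1}{2}\log|x||\ll \log H$, together with the parallelogram law applied to an LLL-reduced basis of the N\'eron--Tate regulator, in order to bootstrap $\log B\ll \log H+\log\log|x|$; (ii) the standard unconditional bound $r\ll \log|{\Disc}(E)|/\log\log|{\Disc}(E)|$ on the rank, giving $r\ll \log H$; and crucially (iii) the existence of a minimal basis with $\prod_i\hat h(P_i)\ll H^{1-\varepsilon}$. Substituting back into the David--Hirata-Kohno estimate and iterating once to absorb the $(\log\log|x|)^{\kappa(r)}$ factor then produces the desired subexponential bound on $|x|$.

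The principal obstacle is step (iii). Unconditionally, known bounds on the canonical heights of a set of Mordell--Weil generators grow at least polynomially in $H$, whether obtained from $2$-descent, explicit regulator estimates in terms of the discriminant, or by invoking a BSD formula with only trivial bounds on $L$-derivatives; reducing them to $H^{1-\varepsilon}$ in aggregate appears to require genuinely new input, in effect an effective form of BSD combined with Lindel\"of-type bounds on $L(E,s)$ and its derivatives at $s=1$. By contrast, the transcendence input behind the linear-forms estimate is of essentially the correct shape, and the rank bound in (ii) is already sharp enough. It is thus the uniform control of heights of Mordell--Weil generators, rather than the Baker machinery itself, that constitutes the genuine bottleneck, which is why Conjecture~\ref{con} is invoked here as a hypothesis rather than proved.
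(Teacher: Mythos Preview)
The statement you are addressing is a \emph{conjecture}, not a theorem: the paper does not prove it, and indeed explicitly records that over $\QQ$ only the much weaker exponential bound of Hajdu--Herendi is known. There is therefore no ``paper's own proof'' to compare against. You yourself acknowledge this in your final sentence, so your text is not a proof proposal at all but a heuristic discussion of why the conjecture lies beyond current technology.

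As such a discussion, the broad shape is reasonable: the route through linear forms in elliptic logarithms is the standard one, and you correctly locate the genuine obstruction in controlling $\prod_i \hat h(P_i)$ for a generating set. That said, a couple of points in your sketch are loose. First, the rank bound $r\ll \log|\Disc(E)|/\log\log|\Disc(E)|$ gives $r$ of size roughly $\log H/\log\log H$, and the implicit constants in the David--Hirata-Kohno estimate depend at least exponentially on $r$; so even granting your step (iii), the contribution from the number of generators would already push the final bound well past $H^{1-\varepsilon}$ unless one also had much stronger rank control. Second, your step (i) bootstrap $\log B\ll \log H+\log\log|x|$ via LLL and the height comparison is not self-evidently uniform in the curve; one usually obtains $\log B\ll (\hat h(P)/\lambda_1)^{1/2}$ with $\lambda_1$ the smallest eigenvalue of the regulator matrix, and bounding $\lambda_1$ from below uniformly in $H$ is again tied to the generator-height problem. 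So the obstacles are somewhat more entangled than the clean trichotomy (i)--(iii) suggests. None of this changes your conclusion: the conjecture is genuinely open, and the paper uses it only as a hypothesis in the partial $2$-torsion case.
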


The {\em Hall--Lang conjecture} is recorded by  Lang \cite{lang} and predicts that 
a polynomial bound should hold. Thus the conjecture states that 
 there exist constants $C,\kappa>0$ such that 
$$
|x|<C\max\{|A|,|B|\}^\kappa.
$$
Note that the analogue of this  conjecture over $\FF_q(t)$  is known, thanks to  Schmidt \cite{schmidt}. Over $\QQ$, on the other hand, we only have the exponential bound 
$$
|x|\leq \exp\left(C_\ve\max\{|A|^{\frac{1}{2}},|B|^{\frac{1}{3}}\}^{6+\ve}\right),
$$ 
for any $\ve>0$, which follows from work of 
Hajdu and Herendi \cite{Bug}. 

We are now ready to reveal our  main result.

\begin{theorem}\label{t:main}
Let $\ve>0$. 
Let  $A,B\in \ZZ$ such that $4A^3+27B^2\neq 0$, and let $E_D$ be given by \eqref{eq:jive}.
Assume that Conjecture \ref{con} holds.
Then 
\[
N^{\frac{1}{2}}\ll 
\#\{D\in\D(N):E_D^*(\Z)\neq \varnothing\}\ll \frac{N}{(\log N)^{\frac{1}{8}-\ve}},
\]
where the implied constants depend at most on $A, B$ and $\ve$. 
Moreover, Conjecture \ref{con} is only required in the proof of the upper bound, and only then when 
$x^3+Ax+B$ has precisely one root over $\QQ$.
\end{theorem}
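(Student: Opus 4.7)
For the lower bound I would exhibit an explicit polynomial family of integer solutions. Substituting $x=Dt$ in \eqref{eq:jive} gives $y^2 = D^3(t^3 + At + B)$, so an integer point exists whenever $D$ is the squarefree part of $t^3 + At + B$; this alone only yields $\gg N^{1/3}$ values. To reach $N^{1/2}$, I would seek a polynomial identity $Y(t)^2 = X(t)^3 + AD(t)^2 X(t) + BD(t)^3$ with $D \in \ZZ[t]$ of degree $2$ and squarefree as a polynomial, which can be constructed by matching coefficients in $\ZZ[t]$. A standard squarefree sieve then produces $\gg N^{1/2}$ distinct squarefree specialisations $D(t)\in\D(N)$ as $t$ ranges over integers of size $N^{1/2}$, with corresponding integer points $(X(t),Y(t))\in E_{D(t)}^*(\ZZ)$.

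\textbf{Descent to a singular cubic surface.} The upper bound is the main content. Every non-torsion point $(X,Y)\in E_D^*(\ZZ)$ with $Y\neq 0$ produces, via the Mordell correspondence underlying classical $2$-descent, an integer binary quartic $F(u,v)=au^4+bu^3v+cu^2v^2+duv^3+ev^4$ whose $\SL_2$-invariants satisfy $I(F)=-4AD^2$ and $J(F)=-16BD^3$, together with a primitive integer solution $(u_0,v_0,Z)$ of $F(u_0,v_0)=Z^2$. Applying Julia's reduction theory for binary quartics, I may replace $F$ by a $\GL_2(\ZZ)$-equivalent reduced form whose coefficients satisfy $|a|,\dots,|e| \ll |D|^{O(1)}$, and a further projective normalisation using the two invariant relations cuts the problem down to counting rational points of bounded height on a certain singular cubic surface $V$. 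The trivial height count on $V$ recovers only the bound $N$, so the real work is to beat it.

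\textbf{Character sums and the role of Conjecture \ref{con}.} To do better, I would detect the square condition $F(u_0,v_0)=Z^2$ together with the squarefreeness of $D$ through quadratic characters, turning the count into weighted sums of Jacobi symbols $\sum_{D}\left(\frac{\ast}{|D|}\right)$ whose arguments are polynomial expressions in the reduced quartic data. The plan is then to import the oscillatory machinery that Heath--Brown developed in his study of $2$-Selmer groups for the congruent number family: quadratic reciprocity swaps the character in $D$ for a character in the point variables, after which P\'olya--Vinogradov type estimates, combined with careful bookkeeping over squarefree integers with prescribed prime factorisation, deliver the saving $(\log N)^{-1/8+\ve}$. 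The delicate obstacle, and precisely where Conjecture \ref{con} enters, is the partial $2$-torsion case: then $x^3 + Ax + B = (x-\alpha)Q(x)$ with $Q$ an irreducible quadratic, the descent splits into factors of different shape, and in the quadratic component the character sum machinery collapses unless one first truncates integer points whose $x$-coordinate is very large relative to $|D|$---exactly the truncation supplied by the weak Hall--Lang bound. Organising this truncation while preserving the $\log$-saving seems, to my mind, the main technical hurdle of the proof.
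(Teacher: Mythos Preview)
Your sketch names most of the right tools but assembles them in the wrong order, and the missing idea is structural: the paper splits the count according to the size of $g=\gcd(x(P),D)$, and the two regimes are handled by entirely different arguments. The Mordell--quartic/reduction-theory/cubic-surface route is applied only when $g<K=N(\log N)^{-\kappa}$. The key step you omit is a discriminant-lowering substitution $F_P(X,Y)=M^{-3}f_P(MX+kY,Y)$ with $M=D/g$ (Lemma~\ref{lemma:defFP}); this makes $F_P(1,0)=M$ and $\Delta(F_P)\asymp g^6$ rather than $D^6$. Counting $\SL_2(\ZZ)$-classes of such $F_P$ via the syzygy among the seminvariants lands on the cubic surface $h^3+Aa^2h+Ba^3=r^2g$ with $h,a,r,g\ll G$, contributing $\ll G(\log G)^\nu$ classes for $g\sim G$ (Theorem~\ref{theorem:main}); within each class the Thue inequality $|F(\alpha,\beta)|\leq N/G$ has $\ll(N/G)^{1/2}$ solutions. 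Summing over dyadic $G\leq K$ yields $(NK)^{1/2}(\log K)^\nu$, which already beats $N$ with no character sums at all. Your plan to push character sums into the cubic-surface count is not what is done, and it is not clear what square condition you would be detecting: in this setup $f_P(1,0)=1$ identically.

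For $g$ large (so $\tilde D=D/g\ll(\log N)^\kappa$), the quartic route gives nothing useful and one works directly with $g\tilde y^2=C(\tilde x,\tilde D)$. If $C$ is irreducible, solubility modulo $g$ forces the prime factors of $g$ into a Chebotarev set of density $\leq\tfrac23$, saving $(\log N)^{-1/3}$. If $C$ factors over $\QQ$, one writes $\tilde x=G_1y_1^2$, $\tilde x-A\tilde D=G_2y_2^2$, etc., and the local solubility conditions at $p\mid G_i$ become products of Legendre symbols; \emph{these} are what Heath-Brown's machinery (Theorem~\ref{theorem:charsum}) is applied to, not to any $F(u_0,v_0)=Z^2$. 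Two corrections on the partial-$2$-torsion case: the character sums there do not collapse---they run unconditionally but leave an exceptional family with $xB\in\QQ^2$; that family is counted via Pell-type equations in $\QQ(\sqrt B)$ (Lemma~\ref{lemma:p2excep}), and Conjecture~\ref{con} enters only to bound $\log|x|$ so that the unit count is $o(N)$. Finally, for the lower bound the paper does not manufacture a degree-$2$ parametrisation (whose existence for arbitrary $A,B$ you have not justified) but applies Xiao's square-free-values theorem to the binary quartic $Y\cdot C(X,Y)$.
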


When the underlying curve $E_1$ does not have rational two torsion, so that 
$E_D[2]=\varnothing$, not only is our work completely unconditional, but the upper bound 
holds with $\frac{1}{3}$ instead of $\frac{1}{8}$. 
A detailed summary of our results is presented in Section \ref{s:mom},  which we shall build on to address 
moments of $\#E_D^*(\Z)$. For comparison, 
for the full family of elliptic curves, 
Alp\"{o}ge \cite{Alpoge} has shown that the average number of integral points on  elliptic curves ordered by height is bounded (by at most $66$). More recently, Alp\"{o}ge and Ho 
 \cite{AlpogeHo}
have shown that the second moment is also bounded in the family of all elliptic curves. 
Restricting to the setting of quadratic twist families, we will apply  recent work of Smith \cite{Smith2} to 
assess higher moments of  $\#E_D(\Z)$ in the case that 
$E_D[2]=\varnothing$.

\begin{theorem}\label{t:mom}
Let $A,B\in\Z$ such that $x^3+Ax+B$ is irreducible over $\Q$.
Then, for any positive integer $k\leq \log\log\log N$, we have
\[
\frac{1}{\#\D(N)}
\sum_{D\in\D(N)} \#E_D(\Z)^k\ll \frac{1}{(\log N)^{\frac{1}{4}}} ,
\]
 where the implied constant depends at most on $A$ and $B$.
\end{theorem}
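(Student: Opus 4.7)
The proof strategy is to combine Theorem \ref{t:main} with moment estimates for the 2-Selmer group due to Smith, via H\"older's inequality and an exponential-in-rank bound for integer points. Since $x^3+Ax+B$ is irreducible over $\Q$, each $E_D$ has trivial rational 2-torsion, so $E_D^*(\Z) = E_D(\Z)$ and the unconditional case of Theorem \ref{t:main} applies with exponent $\tfrac{1}{3}$, yielding $\#S_N \ll N/(\log N)^{1/3}$ where $S_N := \{D \in \D(N) : E_D(\Z) \neq \varnothing\}$. For each $D$ I would pair this with a Helfgott--Venkatesh-type bound $\#E_D(\Z) \leq c_1\alpha^{r_D}$ for absolute $\alpha>1$, together with the descent inequality $r_D \leq s_D := \dim_{\F_2}\Sel_2(E_D/\Q)$, which uses the triviality of $E_D[2](\Q)$.

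Applying H\"older with conjugate exponents $\tfrac{4}{3}$ and $4$ gives
\[
\sum_{D\in\D(N)} \#E_D(\Z)^k \;=\; \sum_{D\in S_N}\#E_D(\Z)^k \;\leq\; (\#S_N)^{3/4}\Bigl(\sum_{D\in\D(N)} \#E_D(\Z)^{4k}\Bigr)^{1/4};
\]
the first factor already produces $\ll N^{3/4}/(\log N)^{1/4}$, which matches the target exponent $\tfrac{1}{4}$ (arising simply as $\tfrac{3}{4}\cdot\tfrac{1}{3}$). For the residual factor, I would use $\#E_D(\Z)^{4k} \leq c_1^{4k}\alpha^{4ks_D}$ and invoke Smith \cite{Smith2}, whose results on the distribution of 2-Selmer ranks in quadratic twist families of curves without rational 2-torsion identify the limiting (Cohen--Lenstra-type) distribution of $s_D$ and yield an estimate of the form $\sum_D \alpha^{4ks_D} \ll N$ in which the implied constant grows at most like $\exp(O(k^2))$ in $k$.

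The range $k\leq\log\log\log N$ is dictated precisely by the need for this $k$-dependent overhead $\exp(O(k^2))$ to be $(\log N)^{o(1)}$, so that it is absorbed into the final implied constant. The main obstacle is therefore the extraction from Smith's theorem of an effective moment bound for $\alpha^{s_D}$ with this controlled growth rate; once this input is in place, the remaining ingredients---the exponential-in-rank bound, the descent inequality $r_D \leq s_D$, and H\"older's inequality---combine in an essentially routine fashion to produce the stated upper bound.
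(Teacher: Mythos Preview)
Your proposal follows essentially the same strategy as the paper: combine the sparsity bound for $S_N$ (Theorem~\ref{theorem:maintwists}), an exponential-in-rank bound for $\#E_D(\Z)$, and Smith's moment estimate, via H\"older's inequality. The paper packages this as the general Theorem~\ref{theorem:tomoments} and uses Hindry--Silverman (Theorem~\ref{theorem:HS}) for the integral-points bound, together with Smith's result stated directly for $\rank E_D(\Q)$ (Theorem~\ref{theorem:Smith}); your passage through the $2$-Selmer rank $s_D$ is therefore unnecessary, though harmless.

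One small gap: your fixed H\"older exponents $(p,q)=(\tfrac{4}{3},4)$ extract exactly $(\log N)^{-1/4}$ from the sparsity factor, leaving no slack to absorb the $\log\log N$ in Theorem~\ref{theorem:maintwists} or the $\exp(O(k^2))$ overhead from Smith. Both of these are $(\log N)^{o(1)}$ but are not bounded, so they cannot be ``absorbed into the final implied constant'' as you write, since that constant must depend only on $A,B$. The paper handles this by taking $q=1/\ve$ for small $\ve$ (equivalently $p$ slightly below $\tfrac{4}{3}$), applying the sparsity hypothesis with $\kappa=\tfrac{1}{3}-\tfrac{1}{24}<\tfrac{1}{3}$, and using the resulting slack to swallow these sub-polynomial factors while still reaching the clean exponent $\tfrac{1}{4}$. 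Adjusting your H\"older exponents in the same way fixes the argument.
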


Let us proceed by summarising our approach to Theorem \ref{t:main}.
Our argument differs according to the size of $\gcd(x_P,D)=g_P$, say, where $P=(x_P,y_P)\in E_D(\ZZ)$.
When $g_P$ is large we shall reduce to a question about the solubility of $x^3+Ax+B$ modulo $g_P$. When  $x^3+Ax+B$  is irreducible over $\QQ$, 
the Chebotarev density theorem  will allow us to control the 
density of primes $p$ for which $x^3+Ax+B$ has a root modulo $p$.
Alternatively, when $x^3+Ax+B$  is reducible over $\QQ$, we shall 
reduce to a question about the solubility of a system of equations, for which 
 character sum estimates will prove crucial. 

When $g_P$ is small, on the other hand, 
our approach makes crucial use of a construction by Mordell 
\cite[Chapter~25]{Mordell}, which associates a binary quartic form 
$f_P\in \ZZ[x_1,x_2]$
to any point 
$P\in E_D(\Z)$. Building on the 
discriminant-lowering procedure adopted for binary cubic forms in 
\cite{cubicav}, we will use the reduction theory of binary quartic forms, as explained by Cremona \cite{Cremona}, in order to lower the discriminant of $f_P$. 
This will ultimately allow us to reduce the problem to counting rational points of bounded height on a certain singular cubic surface, which may be of independent interest.

\medskip

Let 
$C\in \ZZ[x_1,x_2]$ be a separable binary cubic form and let  $S\subset \PP^3$ be the cubic surface
\begin{equation}\label{eq:ogeq}
C(x_1,x_2)=x_3^2x_4. 
\end{equation}
This is a singular cubic surface containing six lines, and with an isolated singularity 
$(0:0:0:1)$ of type $\mathbf{D}_4$.
The Manin conjecture \cite{fmt} makes a precise prediction for the asymptotic behaviour of the quantity
$$
N(U;B)=\#\left\{x\in U(\QQ): H(x)\leq B\right\},
$$
as $B\to \infty$, where 
$H$ is standard height function on $\PP^3(\QQ)$ and 
$U$ is the Zariski open subset formed by deleting the six lines from $S$. A crude form of the conjecture predicts linear growth, so that $N(U;B)=O_S(B^{1+\ve})$, for any $\ve>0$.

When the cubic form factorises as $C(x_1,x_2)=x_1x_2(x_1+x_2)$, the surface $S$ has been studied by Browning \cite{d4}, who established that
$$
B(\log B)^6\ll N(U;B) \ll B(\log B)^6.
$$
This was later upgraded to an asymptotic formula for $N(U;B)$ by Le Boudec \cite{d4-a}.
For Theorem \ref{theorem:maintwists} we shall need to study $S$ for any separable binary cubic form $C$.  In fact, it will suffice to restrict  to the locus of $(x_1:x_2:x_3:x_4)
 \in S(\QQ)$ for which $\gcd(x_1,x_2,x_4)=1$, as follows.

\begin{theorem}\label{theorem:main}
Let $C\in \ZZ[x_1,x_2]$ be a binary cubic form that is separable over $\Q$.
Let $N^\circ(B)$ be the number of $(x_1,x_2,x_3,x_4)\in\Z^4$ with $\gcd(x_1,x_2,x_4)=1$, $x_3x_4\neq 0$, and $|x_i|\leq B$, such that~\eqref{eq:ogeq} holds.
Then
\[B\ll N^\circ(B)\ll B(\log B)^{\max\{\lambda,2\}},\]
where $\lambda$ is the number of irreducible factors of $C$ over $\Q$, and the implied constant depends at most on $C$.
\end{theorem}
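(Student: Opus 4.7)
The plan for the lower bound is to exhibit a parametric family. For each positive integer $s \leq c B^{1/3}$ and each coprime pair $(y_1, y_2) \in \Z^2$ with $|y_i| \leq c B^{1/3}$, $C(y_1, y_2) \neq 0$, and $\gcd(s, C(y_1, y_2)) = 1$, the tuple $(x_1, x_2, x_3, x_4) = (s^2 y_1, s^2 y_2, s^3, C(y_1, y_2))$ satisfies $C(x_1, x_2) = s^6 C(y_1, y_2) = x_3^2 x_4$, has $\gcd(x_1, x_2, x_4) = \gcd(s^2, C(y_1, y_2)) = 1$, $x_3 x_4 \neq 0$, and all coordinates bounded by $B$ provided $c$ is chosen small enough. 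Since distinct triples $(s, y_1, y_2)$ yield distinct tuples (one recovers $s$ from $x_3$ and $y_i$ from $x_i/s^2$), the count of valid triples $\gg B^{1/3} \cdot B^{2/3} = B$ delivers the desired lower bound.

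For the upper bound I would begin with a parametrization based on $\gcd(x_1, x_2)$. Setting $g = \gcd(x_1, x_2)$ and $x_i = g y_i$ with $\gcd(y_1, y_2) = 1$, the hypothesis $\gcd(x_1, x_2, x_4) = 1$ becomes $\gcd(g, x_4) = 1$, which combined with $g^3 C(y_1, y_2) = x_3^2 x_4$ forces $g^3 \mid x_3^2$. Writing $g = a^2 b$ uniquely with $b$ squarefree, this is equivalent to $a^3 b^2 \mid x_3$; setting $x_3 = a^3 b^2 n$, the equation reduces to
\[
C(y_1, y_2) = b n^2 x_4,\qquad \gcd(ab, x_4) = 1,
\]
supplemented with $\gcd(y_1, y_2) = 1$. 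The original height bounds translate to $a^2 b \max(|y_1|, |y_2|) \leq B$, $a^3 b^2 |n| \leq B$, and $|x_4| \leq B$.

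Next, for each fixed $(a, b, n)$ with $a^3 b^2 n \leq B$, I would estimate the number of primitive $(y_1, y_2) \in [-Y, Y]^2$, where $Y := B/(a^2 b)$, satisfying $M \mid C(y_1, y_2)$ (with $M := bn^2$) and $|C(y_1, y_2)| \leq BM$ (equivalent to $|x_4| \leq B$). Using that the area of $\{(y_1, y_2) \in \R^2 : |C| \leq T\}$ is $\asymp T^{2/3}$ (by homogeneity of $C$ and integrability of $|C|^{-2/3}$ on the unit circle), and that the density of primitive residues $(y_1, y_2) \pmod M$ with $M \mid C$ is $\asymp r(M)/M$, where $r(M) = \prod_{p\mid M} r(p)$ and $r(p)$ is the number of $\FF_p$-points of $\{C = 0\} \subset \PP^1$, the inner count is $\ll B^{2/3} r(M)/M^{1/3}$. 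Summing over $a$ then gives
\[
N^\circ(B) \ll B \sum_{\substack{b \text{ squarefree},\, n \geq 1 \\ b^2 n \leq B}} \frac{r(bn^2)}{bn}.
\]

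The final step is to bound this multiplicative sum by $(\log B)^{\max\{\lambda, 2\}}$. Via the Chebotarev density theorem applied to the splitting field of $C$, $\sum_{p \leq X} r(p)/p \sim \lambda \log\log X$, and a direct Euler product analysis produces only $(\log B)^{2\lambda}$. The hard part will be harnessing the coprimality condition $\gcd(ab, x_4) = 1$, which at primes $p \mid a$ coprime to $bn$ demands $p \nmid C(y_1, y_2)$; this is an additional density-$(1 - r(p)/p)$ restriction that should kill a factor of $(\log B)^\lambda$ in the main sum (for $\lambda \geq 2$), reducing $(\log B)^{2\lambda}$ to $(\log B)^\lambda = (\log B)^{\max\{\lambda, 2\}}$, and which is automatic when $\lambda = 1$ since the crude exponent $2\lambda = 2$ already matches. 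The main obstacle is implementing this cancellation uniformly in $(a, b, n)$ while simultaneously controlling the error terms in the lattice-point estimate when $M > \sqrt B$, where the equidistribution heuristic breaks down; this regime likely requires character-sum methods or a finer $\gcd$-based decomposition of $(y_1, y_2)$.
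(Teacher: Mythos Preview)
Your lower-bound construction differs from the paper's but is viable in principle: the paper instead fixes $x_3$ coprime to $\Delta(C)$, counts roots $\alpha \bmod x_3^2$ of $C(\alpha,1)$, and for each uses Minkowski to extract a short lattice vector $(x_1,x_2)$ with $|x_i|\ll x_3$, then sums $\varrho(x_3)$ over $x_3\leq B$. Your parametric family would also give $\gg B$, but the step ``count of valid triples $\gg B$'' is not justified as written; the coprimality $\gcd(s, C(y_1,y_2))=1$ is a sieve whose error must be controlled uniformly for $s$ up to $B^{1/3}$, and you have not done this.

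For the upper bound your decomposition $g=a^2b$ and the reduction to $C(y_1,y_2)=bn^2 x_4$ match the paper exactly. The genuine gap is in the inner count. Your area bound $\vol\{|C|\leq T\}\asymp T^{2/3}$ is correct but is the wrong estimate to use, because you have discarded the constraint $|y_i|\leq Y$. The paper's key observation (Lemma~\ref{lem:linear}) is that if $|y_1|,|y_2|\leq Y$ and $|C(y_1,y_2)|\leq T$ then one linear factor of $C$ over $\overline{\Q}$ satisfies $|L(y_1,y_2)|\ll T/Y^2$, since the other two are $\asymp Y$; hence the region is contained in a union of thin strips of total area $\ll T/Y$ rather than $T^{2/3}$. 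Combined with a further dyadic decomposition in $|x_4|$ (so that $T=bn^2 V$, not $bn^2 B$) and Heath-Brown's lattice-point lemma, this yields $\ll V/Y + 1$ primitive points per lattice. Summing the $V/Y$ term and the $+1$ term separately over the $f(bn^2)$ lattices, then over dyadic $U,V,Y$ and over $a,b$, produces the two contributions that combine to the exponent $\max\{\lambda,2\}$.

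Your proposed rescue---using $\gcd(a,x_4)=1$ to impose a density-$(1-r(p)/p)$ sieve at primes $p\mid a$---does not recover a factor of $(\log B)^\lambda$. This is a positive-density restriction: by Wirsing one has $\sum_{a\leq A}\prod_{p\mid a}(1-r(p)/p)\asymp A$, so the $a$-sum is unchanged in order of magnitude regardless of which variable is summed last. The savings in the paper's argument come entirely from the thin-strip geometry, not from this coprimality, which is used only at the outset to force $g^3\mid x_3^2$.
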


The condition $x_3x_4\neq 0$ places us on the open set $U$, so that $N^\circ(B)\leq N(U;B)$.
The condition $\gcd(x_1,x_2,x_4)=1$ corresponds to  counting integral points on $S\setminus D$, with codimension 2 boundary divisor  $(0:0:1:0)$. This counting problem admits an interpretation 
through work of Chambert-Loir and Tschinkel \cite{ACL}.

Our method of proof is based on viewing the equation~\eqref{eq:ogeq} as a congruence modulo $x_3^2$ and 
relies crucially on the particular coprimality condition that is assumed of the solutions. 
When $C$ is irreducible we have 
 $N(S;B)=N(U;B)+O(1)$, since then  none of the six lines are defined over $\QQ$, and  our work implies that $N(S;B)\gg B$. In this case it would be 
  interesting to have a proof of  the corresponding upper bound 
  $N(S;B)\ll  B$.

\begin{ack}
The authors are grateful to Roger Heath-Brown for useful comments. The first author was
supported by FWF grant P~32428-N35.
\end{ack}

\section{Main results and moments}\label{s:mom}

Let  $A,B\in \ZZ$ such that $4A^3+27B^2\neq 0$, and let $E_D$ be given by \eqref{eq:jive}.
We recall the definition \eqref{eq:salsa} of 
$E_D^*(\Z)$.
In this section we summarise our various results, which differ according to the factorisation properties of
$x^3+Ax+B$. The following three results will be deduced in Sections \ref{s:5.4},
\ref{s:7} and \ref{s:8}, respectively. 

\begin{theorem}\label{theorem:maintwists}
Assume that $x^3+Ax+B$ is irreducible over $\Q$.
Then
\[N^{\frac{1}{2}}\ll \#\{D\in\D(N): E_D(\Z)\neq\varnothing\}\ll N(\log N)^{-\frac{1}{3}}\log\log N,\]
where the implied constants depend at most on $A$ and $B$.
\end{theorem}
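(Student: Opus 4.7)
My plan organises the proof around the quantity $g_P=\gcd(x_P,D)$ for $P=(x_P,y_P)\in E_D(\Z)$, combining Mordell's correspondence and Theorem~\ref{theorem:main} on the geometric side with a Chebotarev input on the arithmetic side.

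\emph{Lower bound.} I would apply the lower bound of Theorem~\ref{theorem:main} to the separable binary cubic
\[
C(x_1,x_2)=x_1^3+Ax_1x_2^2+Bx_2^3.
\]
Given any coprime integer solution $(x_1,x_2,x_3,x_4)$ to $C(x_1,x_2)=x_3^2x_4$, a direct substitution into \eqref{eq:jive} shows that $(x_1x_4,\,x_3x_4^2)\in E_D(\Z)$ with $D=x_2x_4$, since both sides reduce to $x_3^2x_4^4$. Taking $B\asymp N^{\frac{1}{2}}$ forces $|D|\leq B^2\leq N$, and after controlling the fibres of the map $(x_1,x_2,x_3,x_4)\mapsto x_2x_4$ one extracts $\gg N^{\frac{1}{2}}$ distinct $D\in\D(N)$ with $E_D(\Z)\neq\varnothing$.

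\emph{Upper bound, large $g_P$.} Fix a threshold $G=G(N)$, eventually chosen as a small power of $\log N$, and write $D=g_Pm$ with $\gcd(m,g_P)=1$ and $|m|\leq G$. A $p$-adic analysis of \eqref{eq:jive} shows that for every prime $p\mid g_P$ outside a bounded set dividing $4A^3+27B^2$, reading the equation modulo $p^3$ and $p^4$ and using the square-freeness of $D$ forces $\nu_p(x_P)=1$ and then $(x_P/p)^3+A(D/p)^2(x_P/p)+B(D/p)^3\equiv 0\pmod{p}$; after dividing by $(D/p)^3$ this is exactly the statement that $f(x)=x^3+Ax+B$ has a root modulo $p$. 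Let $\mathcal{P}_f$ denote this set of primes. The irreducibility of $f$ together with the Chebotarev density theorem applied to the Galois closure of $\Q[x]/(f)$ (with Galois group $S_3$ or $C_3$) gives $\mathcal{P}_f$ a natural density $\delta\leq\frac{2}{3}$. A Landau--Wirsing estimate on squarefree $\mathcal{P}_f$-smooth integers $g_P$ then yields
\[
\sum_{|m|\leq G}\#\{g_P\leq N/|m|:\text{sqfree and }\mathcal{P}_f\text{-smooth}\}\ll N(\log N)^{\delta-1}\log G\ll N(\log N)^{-\frac{1}{3}}\log\log N.
\]

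\emph{Upper bound, small $g_P$, and main obstacle.} In the complementary regime $|D/g_P|>G$ I would appeal to Mordell's construction, which attaches a binary quartic form $f_P\in\Z[x_1,x_2]$ to $P$, then apply Cremona's reduction theory together with the discriminant-lowering procedure of \cite{cubicav} to realise $P$ as a coprime integer solution $(u,m,t,g_P)$ to $C(x_1,x_2)=x_3^2x_4$ of height $\ll N^{\alpha}$ for some $\alpha<1$. Theorem~\ref{theorem:main} with $\lambda=1$ then bounds this contribution by $O(N^{\alpha}(\log N)^2)$, which is of strictly smaller order than the first regime. The decisive technical step is the $p$-adic analysis above: one must handle with care the primes dividing $4A^3+27B^2$ and the prime $p=2$, and any contribution they make must be shown to be of lower order than $N(\log N)^{-\frac{1}{3}}\log\log N$. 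A secondary difficulty is calibrating the discriminant-lowering step so that, in the absence of the Hall--Lang conjecture, the height on the cubic surface is provably sublinear in $N$; this is what allows Theorem~\ref{theorem:main} to absorb the small $g_P$ contribution without further arithmetic input.
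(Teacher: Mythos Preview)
Your decomposition along $g_P=\gcd(x_P,D)$ and the treatment of the large-$g_P$ regime via Chebotarev are correct and match the paper (the $p$-adic digression is more than you need: from $g\tilde y^2=C(\tilde x,\tilde D)$ and $\gcd(g,\tilde D)=1$ one reads off $\varrho(g)\geq 1$ directly). There are, however, two genuine gaps.

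\textbf{Small $g_P$: the Thue step is missing.} The Mordell--Cremona reduction does not send an individual point $P$ to a point of height $\ll N^{\alpha}$ on the cubic surface. What it does is send the $\SL_2(\Z)$-class of the quartic $F_P$ to a tuple $(h,a,r,g)$ satisfying $h^3+Aa^2h+Ba^3=r^2g$ with $h,a,r,g\ll g_P$; here the height is $\asymp g_P$, which in your small-$g_P$ regime can be as large as $N(\log N)^{-\kappa}$, not $N^{\alpha}$. Theorem~\ref{theorem:main} then bounds only the number of \emph{classes} by $\ll G(\log G)^3$ in a dyadic range $g_P\asymp G$. To recover the number of points you still need to count, for each class, the pairs $(\alpha,\beta)$ with $|F(\alpha,\beta)|\leq N/G$; this is the Thue inequality input (Thunder's bound $\ll (N/G)^{1/2}$), which your plan omits entirely. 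The product gives $(NG)^{1/2}(\log G)^3$, and only after summing over dyadic $G$ and choosing the threshold do you get a bound of the shape $N(\log N)^{3-\kappa/2}$, absorbed by taking $\kappa$ large enough. Your claimed $O(N^{\alpha}(\log N)^2)$ does not follow from the construction as stated.

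\textbf{Lower bound: squarefreeness.} Your substitution $(x_1x_4,x_3x_4^2)\in E_{x_2x_4}(\Z)$ is correct, but nothing in Theorem~\ref{theorem:main} guarantees that $D=x_2x_4$ is squarefree, nor that the fibres of $(x_1,x_2,x_3,x_4)\mapsto x_2x_4$ are bounded. The paper avoids this by applying Xiao's result on squarefree values of the quartic $Y\cdot C(X,Y)$ directly, which produces $\gg N^{1/2}$ squarefree $D$ of the form $\beta\cdot C(\alpha,\beta)$ and hence a point $(\alpha d,d^2)\in E_D(\Z)$ with $d=C(\alpha,\beta)$.
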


\begin{theorem}\label{theorem:mainfullt}
Let $\ve>0$. 
Assume that $x^3+Ax+B$ has three distinct roots over $\Q$.
Then
\[
N^{\frac{1}{2}}\ll \#\{D\in\D(N):E_D^*(\Z)\neq \varnothing\}
\ll 
N(\log N)^{-\frac{1}{8}+\epsilon},
\]
where the implied constants depend at most on $A, B$ and $\epsilon$.
\end{theorem}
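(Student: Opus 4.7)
The plan is to establish the lower and upper bounds separately, with the upper bound requiring most of the work.

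\textbf{Lower bound.} Since $x^3+Ax+B=\prod_{i=1}^3(x-\alpha_i)$ with $\alpha_i\in\Z$, an integral point $P=(x_P,y_P)\in E_D(\Z)$ yields the factorisation $y_P^2=\prod_{i=1}^3(x_P-\alpha_i D)$. Searching for the simplest $2$-descent data $x_P-\alpha_i D=b_i^2$ and eliminating $D$ leads to the single homogeneous quadric
\[
(\alpha_3-\alpha_2)\,b_1^2-(\alpha_3-\alpha_1)\,b_2^2+(\alpha_2-\alpha_1)\,b_3^2=0
\]
in $(b_1,b_2,b_3)\in\PP^2$, which always admits the rational point $(1,1,1)$ and is therefore isomorphic to $\PP^1$ over $\Q$. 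Integral solutions are parametrised linearly in a single integer $t$, giving a quadratic polynomial $D(t)=(b_1(t)^2-b_2(t)^2)/(\alpha_2-\alpha_1)$. Imposing $|D(t)|\leq N$ yields $\gg N^{1/2}$ integer values of $t$, and a standard square-free sieve shows that a positive proportion of the resulting $D(t)$ are square-free and give rise to distinct non-trivial points on $E_{D(t)}^*(\Z)$.

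\textbf{Upper bound.} For each $D\in\D(N)$ with a non-trivial integral point, fix $P=(x_P,y_P)\in E_D^*(\Z)$ and set $g_P=\gcd(x_P,D)$. We split according to a threshold $G=G(N)$ to be optimised at the end.

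\emph{Large $g_P$ case.} Writing $g=g_P$, $x_P=gx'$, $D=gD'$, the square-freeness of $g$ forces $g^2\mid y_P$; setting $y'=y_P/g^2$ we obtain $gy'^2=\prod_i(x'-\alpha_iD')$. Classical $2$-descent produces $x'-\alpha_iD'=a_ib_i^2$ with square-free $a_i$, and the coprimalities $\gcd(x',D')=\gcd(g,D')=1$ together with the linear relations $u_i-u_j=(\alpha_j-\alpha_i)D'$ force $a_1a_2a_3=g$ up to a bounded set of primes dividing $\prod_{i<j}(\alpha_i-\alpha_j)$, with each prime of $g$ appearing in exactly one of the $a_i$. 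For each admissible triple $(a_1,a_2,a_3)$ the two independent relations $a_ib_i^2-a_jb_j^2=(\alpha_j-\alpha_i)D'$ form a Pell-type system in $(b_1,b_2,b_3)$. Summing over $(a_1,a_2,a_3)$ and $D'$ and extracting cancellation from products of Jacobi symbols indexed by the primes $p\mid g$, in the spirit of Heath-Brown's Selmer-group analysis for the congruent number curve, yields the saving $(\log N)^{-1/8+\epsilon}$.

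\emph{Small $g_P$ case.} Here we invoke Mordell's construction, attaching to $P$ an integral binary quartic form $f_P\in\Z[x_1,x_2]$ with discriminant essentially equal to $\disc(E_D)$, and apply the reduction theory of binary quartics (Cremona) to place $f_P$ in a fundamental domain for $\SL_2(\Z)$. The counting problem then transfers to counting integral points on the singular cubic surface $C(x_1,x_2)=x_3^2x_4$ of Theorem~\ref{theorem:main}, where $C$ is a separable binary cubic built from $f_P$. Since $x^3+Ax+B$ splits completely over $\Q$, the same is true of $C$, so $\lambda=3$ in Theorem~\ref{theorem:main}, giving $N^\circ(B)\ll B(\log B)^3$; after dyadic summation the contribution from this case is absorbed into the large-$g_P$ bound at the optimal choice of $G$.

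The principal obstacle is the character sum analysis in the large-$g_P$ case. The full splitting of $x^3+Ax+B$ over $\Q$ precludes the Chebotarev-type savings available in the irreducible setting of Theorem~\ref{theorem:maintwists}, and we must extract all cancellation from quadratic character products indexed by the $2$-descent data. Ensuring uniformity of this cancellation across all admissible triples $(a_1,a_2,a_3)$, in combination with the coprimality conditions inherited from the descent, is where the specific exponent $\tfrac{1}{8}$ is pinned down.
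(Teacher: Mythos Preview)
Your proposal has the correct high-level architecture (split by $\gcd(x_P,D)$; Mordell quartic plus the cubic-surface count of Theorem~\ref{theorem:main} for small gcd; $2$-descent plus Heath-Brown character sums for large gcd), and your lower bound via the conic parametrisation is a valid alternative to the paper's Lemma~\ref{lemma:twistslb}, which instead invokes Xiao's theorem on square-free values of binary forms.

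There is, however, a genuine gap in the large-gcd analysis. You assert that the character sum over the descent triples $(a_1,a_2,a_3)$ directly yields the saving $(\log N)^{-1/8+\epsilon}$. In the paper's argument the natural sum built from the three factors $x'-\alpha_iD'=a_ib_i^2$ has twelve indices with $f_i=4^{-\omega}$ and maximal unlinked sets of size $4$; Theorem~\ref{theorem:charsum} then gives only an \emph{error} of $(\log N)^{-1/4+\epsilon}$, and the main term fails to vanish for certain admissible unlinked sets. These exceptional configurations must be excised and treated separately. One class, corresponding to points in $(BD,0)+2E_D(\Q)$, is handled by simultaneous Pell equations (Lemma~\ref{lemma:simPell}). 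The other arises precisely when $AB\in\Q^2$ and forces $xD\in\Q^2$; writing $x=Du^2$ leads to $(u^2-a^2)(u^2-b^2)=Dt^2$, and a \emph{secondary} descent on the four linear factors $u\pm a$, $u\pm b$ produces a new character sum with $f_i=8^{-\omega}$ and maximal unlinked sets of size $8$. It is this secondary sum (Lemma~\ref{lemma:cyclic4}) that fixes the exponent $\tfrac18$. Your description conflates the Pell system with the generic case and does not isolate the secondary descent, so as written the argument would stall at the exceptional configurations rather than reach the claimed bound.

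A minor correction in the small-gcd case: since $E_D$ has full $2$-torsion, the degeneration $r=0$ in Lemma~\ref{lemma:countforms} contributes $O(G(\log G)^6)$ additional classes, so the relevant exponent is $\nu=6$ in Lemma~\ref{lemma:smallgcd}, not $3$. This is harmless once the threshold is chosen as $\kappa=13$.
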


\begin{theorem}\label{theorem:mainpartt}
Assume that $x^3+Ax+B$ factors as $(x-r)Q(x)$,  for some $r\in \Q$ and a polynomial $Q$ that is irreducible over $\Q$. 
Assume that either 
$Q(r)<0$, or $Q(r)\in\Q^2$, or else that 
Conjecture \ref{con} holds.
Then
\[
N^{\frac{1}{2}}\ll 
\#\left\{D\in\D(N): E_D^*(\Z)\neq \varnothing\right\}
\ll N(\log N)^{-\frac{1}{8}}\log\log N,
\]
where the implied constants depend at most on $A$ and $B$.
\end{theorem}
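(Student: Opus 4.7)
By the rational root theorem, $r\in\ZZ$, and writing $Q(x)=x^2+rx+(r^2+A)$ yields $Q(r)=3r^2+A$. The factorisation
\[
E_D\colon y^2 = (x-rD)\bigl((x-rD)^2+3rD(x-rD)+D^2Q(r)\bigr)
\]
is central throughout. For the lower bound, the rational $2$-torsion point $(rD,0)$ enables a classical parametric construction, analogous to that used by Chan~\cite{congruentav,cubicav} for the congruent number and Mordell twist families, producing $\gg N^{\frac{1}{2}}$ square-free $D\in\D(N)$ for which $E_D$ admits a non-torsion integral point. The argument is insensitive to the sign or square-class of $Q(r)$, and makes no appeal to Conjecture~\ref{con}.

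For the upper bound, fix $P=(x_P,y_P)\in E_D^*(\ZZ)$ and set $u\coloneqq x_P-rD$, so that $u(u^2+3rDu+D^2Q(r))=y_P^2$. Since the greatest common divisor of the two factors on the left divides $3D^2Q(r)$, a $2$-descent produces $\varepsilon\in\{\pm 1\}$, a square-free $h$ dividing $D\cdot Q(r)$, and coprime $u_1,v_1\in\ZZ$ with
\[
u=\varepsilon h u_1^2,\qquad u^2+3rDu+D^2Q(r)=\varepsilon h v_1^2.
\]
I would split the count according to $g_P\coloneqq\gcd(x_P,D)$. In the large-$g_P$ regime, the divisibilities $g_P\mid x_P$ and $g_P\mid D$ force $x^3+Ax+B$ to have a root modulo $g_P$; irreducibility of $Q$ lets the Chebotarev density theorem control the density of admissible prime divisors of $g_P$, and a character-sum cancellation estimate adapted from Heath-Brown's analysis of the congruent number Selmer group yields the contribution $\ll N(\log N)^{-\frac{1}{8}}\log\log N$. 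In the small-$g_P$ regime, Mordell's correspondence \cite[Chapter~25]{Mordell} attaches to $P$ a binary quartic form $f_P\in\ZZ[x_1,x_2]$ whose invariants are controlled by the discriminant of $E_D$; rational $2$-torsion forces $f_P$ to possess a rational linear factor, and Cremona's reduction theory \cite{Cremona} yields a reduced representative whose coefficients grow polynomially in $D$. This casts the problem as an instance of Theorem~\ref{theorem:main} for a binary cubic form $C$ with $\lambda\leq 2$ irreducible factors, and summing over the descent parameter $h$ with additional character-sum cancellation recovers the same $N(\log N)^{-\frac{1}{8}}$ saving.

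The principal obstacle is the subcase $Q(r)>0$ and $Q(r)\notin\QQ^2$. Cremona's reduction of $f_P$ yields coefficients polynomial in $D$ only under an a priori bound on $|x_P|$; in the subcases $Q(r)<0$ and $Q(r)\in\QQ^2$ such a bound follows from elementary sign or factorisation considerations on the quadratic factor $u^2+3rDu+D^2Q(r)$, but in the generic subcase no bound is available by unconditional methods. Conjecture~\ref{con}, supplying $|x_P|\leq\exp(D^{1-\ve})$, fills this gap: the bound is quasi-polynomial in $D$, comfortably sufficient to close the cubic-surface descent and recover the same logarithmic saving as in Theorem~\ref{theorem:mainfullt}. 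Patching the two regimes and optimising the cut-off on $g_P$ then yields the stated upper bound.
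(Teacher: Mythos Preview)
Your split by $g_P=\gcd(x_P,D)$ and the ingredients you name (character sums for large $g_P$; Mordell's quartic, Cremona reduction, and Theorem~\ref{theorem:main} for small $g_P$) match the paper. But you have the role of Conjecture~\ref{con} backwards. The small-$g_P$ regime is unconditional: after the discriminant-lowering step of Lemma~\ref{lemma:defFP}, the form $F_P$ has invariants $I,J$ depending only on $\gcd(2x_P,D)$, and Cremona's reduction (Lemma~\ref{lemma:redquart}) bounds the seminvariants of the reduced form purely in terms of $I,J$. The size of $x_P$ is irrelevant here; the whole point of the construction is that it erases the height of $P$. Your assertion that reduction ``yields coefficients polynomial in $D$ only under an a~priori bound on $|x_P|$'' is therefore incorrect. (Incidentally, rational $2$-torsion does not give $f_P$ a rational \emph{linear} factor---at best it forces a factorisation into two quadratics---and the paper makes no use of any such factorisation.)

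Conjecture~\ref{con} is actually needed in the \emph{large}-$g_P$ regime. The $2$-descent and Theorem~\ref{theorem:charsum} handle the generic points (Lemma~\ref{lemma:partialgenericgcd}), but leave an exceptional set with $x_P\cdot Q(r)\in\QQ^2$ that the Jacobi symbols cannot see. For these one factors over the splitting field of $Q$ and arrives at a norm equation in $\QQ(\sqrt{Q(r)})$ (Lemma~\ref{lemma:p2excep}): if $Q(r)<0$ there are no such exceptional points; if $Q(r)\in\QQ^2$ the norm form factors over $\ZZ$; but if $Q(r)>0$ with $Q(r)\notin\QQ^2$, the solutions run over an orbit under the infinite unit group of $\QQ(\sqrt{Q(r)})$, and their number is $\asymp\log|x_P|$. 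It is here, and only here, that the bound $|x_P|\leq\exp(CN^{1-\ve})$ from Conjecture~\ref{con} is invoked. (A smaller point: in the reducible case the large-$g_P$ input is the $2$-descent character sum of Theorem~\ref{theorem:charsum}, not Chebotarev on the cubic; that device belongs to Section~4, the irreducible case.)
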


Assume that  $Q(r)>0$ and $Q(r)\notin\Q^2$. 
Then, as we shall see in Section~\ref{s:8}, 
we shall also be able to give an unconditional proof of the upper bound 
\[
\#\left\{D\in\D(N):\begin{array}{l}
x\cdot Q(r)\notin\Q^2 \text{ or }
\gcd(x,D)< N(\log N)^{-\frac{49}{4}}\\
\text{for some }(x,y)\in E_D(\Z)
\end{array}
\right\}
\ll N(\log N)^{-\frac{1}{8}}\log\log N,
\]
where the implied constant depends at most on $A$ and $B$.

\medskip

Turning to moments,  Theorem \ref{t:mom} is a straightforward consequence of the following general statement about  moments of $\#E^*_D(\Z)$. 

\begin{theorem}\label{theorem:tomoments}
Suppose that 
$$
\#\{D\in\D(N):E^*_D(\Z)\neq \varnothing\}\ll N(\log N)^{-\kappa},
$$ 
for some $\kappa>0$, where the implied constant depends at most on $A,B$ and $\kappa$. 
Then for any positive integer $k\leq \log\log\log N$ and any $\ve>0$, we have
\[
\frac{1}{\#\D(N)}
\sum_{D\in\D(N)} \#E^*_D(\Z)^k\ll (\log N)^{-\kappa+\ve} ,
\]
 where the implied constants depend at most on $A,B,\kappa$ and $\ve$.
\end{theorem}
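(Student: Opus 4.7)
\medskip
\noindent\textbf{Proof plan.} The plan is to apply H\"older's inequality to interpolate between the hypothesised sparsity of $D\in\D(N)$ with $E_D^*(\Z)\neq\varnothing$ and a uniform moment bound on $\#E_D^*(\Z)$. Since $\#E_D^*(\Z)^k$ is supported on the set of $D$ for which $E_D^*(\Z)\neq\varnothing$, H\"older with conjugate exponents $p,q>1$ produces
\[
\sum_{D\in\D(N)}\#E_D^*(\Z)^k\;\leq\;\#\{D\in\D(N):E_D^*(\Z)\neq\varnothing\}^{1/p}\cdot\Bigl(\sum_{D\in\D(N)}\#E_D^*(\Z)^{kq}\Bigr)^{1/q}.
\]
The hypothesis immediately bounds the first factor by $\ll(N(\log N)^{-\kappa})^{1/p}$, reducing matters to estimating the $kq$-th moment of $\#E_D^*(\Z)$.

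For the moment factor I would invoke a bound of Silverman type, namely $\#E_D(\Z)\ll_{A,B}c_0^{r(E_D)}$ with $c_0$ absolute, where $r(E_D)$ denotes the Mordell-Weil rank of $E_D(\Q)$. Since $r(E_D)$ is at most the $2$-Selmer rank $s_2(E_D)$, it suffices to control moments of $c_0^{s_2(E_D)}$ over the twist family. When $x^3+Ax+B$ is irreducible over $\Q$ this is furnished by Smith's distributional result \cite{Smith2}, while in the partial or full $2$-torsion cases analogous estimates follow from the $2$-Selmer statistics of Heath-Brown and Kane. In all cases one extracts an estimate of the form
\[
\sum_{D\in\D(N)}c_0^{m\cdot s_2(E_D)}\ll_{A,B} N\cdot\exp(\gamma m^2)
\]
for some $\gamma=\gamma(A,B)>0$, reflecting the Gaussian-type tails of the limiting Selmer rank distribution. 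Applied with $m=kq$, this bounds the $kq$-th moment factor above by $N^{1/q}\exp(\gamma k^2q)$.

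Assembling, one obtains
\[
\sum_{D\in\D(N)}\#E_D^*(\Z)^k\ll N\cdot(\log N)^{-\kappa/p}\cdot\exp(\gamma k^2q).
\]
The proof then concludes by optimising: choose $p=1+\delta$ with $\delta=\delta(\varepsilon,\kappa)>0$ small enough that $\kappa/p\geq\kappa-\varepsilon/2$, so that the conjugate $q$ is a constant depending only on $\varepsilon$ and $\kappa$. Under the standing hypothesis $k\leq\log\log\log N$ one has $k^2q\leq q(\log\log\log N)^2=o(\log\log N)$, whence $\exp(\gamma k^2q)=(\log N)^{o(1)}\ll(\log N)^{\varepsilon/2}$. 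Dividing by $\#\D(N)\asymp N$ delivers the target bound $(\log N)^{-\kappa+\varepsilon}$. The principal obstacle is extracting the Selmer moment estimate with sufficiently tame (at most $\exp(\gamma m^2)$) dependence on the moment order $m$; this requires a quantitative reading of \cite{Smith2} and the earlier work of Heath-Brown and Kane rather than just their qualitative conclusions, but is essentially already present in the techniques of those authors. Once that ingredient is in hand, the H\"older argument runs smoothly and gives the full range $k\leq\log\log\log N$.
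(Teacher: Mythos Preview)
Your proposal is correct and follows essentially the same route as the paper: H\"older's inequality, a Silverman-type bound $\#E_D(\Z)\ll_{A,B} c^{\,\mathrm{rank}\,E_D(\Q)}$ (uniform in $D$ because the Szpiro ratio is bounded in a quadratic twist family, via Hindry--Silverman), Smith's sub-Gaussian moment bound for $\exp(m\cdot\mathrm{rank})$, and then the choice $q\asymp 1/\varepsilon$ so that $k^2q=o(\log\log N)$. Two minor remarks: the detour through $2$-Selmer ranks and the case split by $2$-torsion structure are unnecessary, since Smith's result (as quoted in the paper as Theorem~\ref{theorem:Smith}) already controls $\sum\exp(m\cdot\mathrm{rank}\,E_D(\Q))$ directly and uniformly in $A,B$; and you should make explicit that the uniformity of the Silverman-type bound over the twist family rests on the Szpiro ratio being $O_{A,B}(1)$, which is where Hindry--Silverman enters.
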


We obtain Theorem \ref{t:mom} by applying  Theorem \ref{theorem:maintwists} in Theorem \ref{theorem:tomoments}, with $\kappa=\frac{1}{3}-\frac{1}{24}$ and $\ve=\frac{1}{24}$.
Theorem~\ref{theorem:tomoments} will be a consequence of H\"{o}lder's inequality and the following two results.
The first is a special case of recent work by Smith \cite[Theorem~1.1]{Smith2}.

\begin{theorem}[Smith]\label{theorem:Smith}
Let $A,B$ be integers such that $4A^3+27B^2\neq 0$, and let
$E_D$ be given by \eqref{eq:jive}. Then there are real numbers $C,C'>0$, depending only on $A,B$, such that for any $m>0$ and  $N>C$ satisfying $m<C'\log\log\log N$, we have
\[
\frac{1}{\#\D(N)}
\sum_{D\in\D(N)}\exp(m\rank E_D(\Q))\leq \exp(Cm^2).
\]
\end{theorem}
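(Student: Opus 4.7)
The plan is to extract the exponential moment bound directly from Smith's main distributional theorem on $2$-Selmer groups of quadratic twists, rather than by any new argument. The starting point is the trivial Mordell--Weil bound
\[
\rank E_D(\QQ)\leq \dim_{\FF_2}\Sel_2(E_D/\QQ)-\dim_{\FF_2} E_D(\QQ)[2],
\]
so that controlling exponential moments of the rank reduces (with a negligible loss, since $E_D[2](\QQ)$ is controlled uniformly in $D$ by the splitting behaviour of $x^3+Ax+B$) to controlling exponential moments of $\dim_{\FF_2}\Sel_2(E_D)$ as $D$ ranges over $\D(N)$.

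The next step is to invoke Smith's Theorem~1.1, which provides an effective form of the Bhargava--Kane--Lenstra--Poonen--Rains conjecture for the family $\{E_D\}_{D\in\D}$: for each non-negative integer $r$, the density of $D\in\D(N)$ with $\dim_{\FF_2}\Sel_2(E_D)=r_0+r$ (where $r_0$ is a local correction depending only on $A,B$) converges to an explicit probability $p_r$ satisfying $p_r\ll 2^{-r(r-1)/2}$, with a power-saving error term in $N$. Summing geometrically, the limiting distribution has moment generating function bounded by $\exp(Cm^2)$ for all $m>0$ and some absolute $C$.

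The final step is to trade the error term in Smith's effective statement against the range of $m$. Writing
\[
\frac{1}{\#\D(N)}\sum_{D\in\D(N)}\exp\bigl(m\dim_{\FF_2}\Sel_2(E_D)\bigr)
=\sum_{r\geq 0} e^{m(r+r_0)}\cdot\frac{\#\{D\in\D(N):\dim_{\FF_2}\Sel_2(E_D)=r_0+r\}}{\#\D(N)},
\]
I split the sum at the truncation point $r_*\asymp \log\log N$ below which Smith's effective equidistribution with a power saving in $N$ dominates, and above which the trivial bound $2\dim\Sel_2(E_D)\leq \log\#\Sha[2]+O(\log|D|)$ combined with Brumer-type averages suffices. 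The super-Gaussian decay $p_r\ll 2^{-r^2/2}$ is precisely what absorbs the factor $e^{mr}$ and produces the bound $\exp(Cm^2)$, and the range $m<C'\log\log\log N$ emerges from requiring $e^{mr_*}$ to be dominated by the power saving.

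The main obstacle is quantitative: Smith's theorem as stated yields convergence in distribution, but to get a clean exponential moment bound of the form $\exp(Cm^2)$ uniformly for $m$ up to $C'\log\log\log N$, one must track carefully the effective error terms, the local correction $r_0$ (which depends on the reduction type of $E$ at primes dividing $4A^3+27B^2$), and the tail contribution from $D$ with anomalously large Selmer group. In practice, Smith's paper already records the required uniformity, so that once the reduction to Selmer rank is made, the bound follows by a direct term-by-term computation of the generating function of the limiting distribution combined with the effective error estimate.
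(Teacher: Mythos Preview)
The paper does not prove this statement at all: it is recorded as Theorem~\ref{theorem:Smith} with the attribution ``a special case of recent work by Smith \cite[Theorem~1.1]{Smith2}'' and is used purely as a black box in the proof of Theorem~\ref{theorem:tomoments}. There is nothing to compare against beyond the citation itself; the exponential moment bound is taken verbatim from Smith's paper rather than deduced from an intermediate distributional statement.

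Your sketch, by contrast, attempts to reconstruct a proof by passing through $\dim_{\FF_2}\Sel_2(E_D)$ and the BKLPR-type limiting distribution. That is indeed roughly the architecture of Smith's own argument, but as written your sketch has gaps that would need substantial work to close. The tail step is the weakest point: the inequality you quote, ``$2\dim\Sel_2(E_D)\leq \log\#\Sha[2]+O(\log|D|)$'', is not a meaningful bound (it mixes quantities in a way that does not control the tail), and ``Brumer-type averages'' give bounds on the \emph{average} rank, not the kind of pointwise or large-deviation control needed to make the contribution from $r>r_*$ negligible against $e^{mr}$. Smith's effective error terms in \cite{Smith2} are also considerably more delicate than a single power saving in $N$; the triple-log range for $m$ reflects the tower of iterated logarithms appearing in his method, not a simple truncation at $r_*\asymp\log\log N$. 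For the purposes of this paper none of that matters: you should simply cite \cite[Theorem~1.1]{Smith2}, as the authors do.
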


The second ingredient we require is a straightforward  consequence of a result by Hindry and Silverman \cite[Theorem~0.7]{HS}.

\begin{theorem}\label{theorem:HS}
Let $A,B$ be integers such that $4A^3+27B^2\neq 0$, 
and let
$E_D$ be given by \eqref{eq:jive} for $D\in \mathcal{D}$.
Then there exists an absolute constant $c$ such that 
\[
\# E_D(\Z)\leq c^{\omega(\gcd(A,B))+(1+\rank E_D(\Q))\sigma_{E_D}},
\]
where 
\[
\sigma_{E_D}=\frac{\log |\text{discriminant of }E_D|}{\log |\text{conductor of }E_D|}
\]
is the Szpiro ratio of $E/\Q$.
\end{theorem}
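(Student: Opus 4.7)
The plan is to pass to a global minimal Weierstrass model $E_D^{\min}$ of the curve $E_D$, apply Hindry--Silverman's integral points theorem to that model, and then incorporate the discrepancy between integral points of $E_D$ and of $E_D^{\min}$ into the factor $c^{\omega(\gcd(A,B))}$.

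First I would carry out the minimization step. Because $E_D$ is in short Weierstrass form, any global minimal model arises from a substitution $(x,y) = (u^2 X, u^3 Y)$ for a positive integer $u$ with $u^4 \mid AD^2$ and $u^6 \mid BD^3$. Since $D$ is squarefree, $v_p(D) \leq 1$ at every prime $p$, and these divisibilities force $v_p(A) \geq 2$ and $v_p(B) \geq 3$ whenever $p \mid u$. In particular every prime dividing $u$ also divides $\gcd(A,B)$, so $\omega(u) \leq \omega(\gcd(A,B))$; when $\gcd(A,B) = 1$ the given equation is already minimal and no correction is needed. Next, Hindry--Silverman's Theorem 0.7 applied to the minimal model delivers an absolute constant $c_0$ such that
\[
\#E_D^{\min}(\Z) \leq c_0^{(1+\rank E_D(\Q))\sigma_{E_D}},
\]
using that the minimal discriminant and the conductor of $E_D^{\min}$ agree with those of $E_D$. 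Finally, under the minimizing substitution, each $(x,y) \in E_D(\Z)$ maps to a rational point $(x/u^2,\, y/u^3) \in E_D^{\min}(\Q)$ whose denominators are supported on the set $S$ of primes dividing $u$, i.e.\ to an $S$-integral point of $E_D^{\min}$. The $S$-integral form of Hindry--Silverman contributes an extra factor of $c_1^{|S|} \leq c_1^{\omega(\gcd(A,B))}$, which combined with the previous estimate yields the stated bound on taking $c = \max\{c_0, c_1\}$.

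The chief point requiring care is this last step, where one invokes the $S$-integral refinement of Hindry--Silverman with only exponential dependence on $|S|$. This is already present in the proof of their Theorem 0.7 (the $|S|$-dependence entering through the treatment of places of bad reduction), and in any case can be recovered via the standard device of enlarging the set of bad primes by $S$ and re-applying the theorem on the minimal model viewed as an elliptic curve with the augmented bad-reduction set.
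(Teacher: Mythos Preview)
Your proof is correct and follows essentially the same route as the paper: pass to a quasi-minimal short Weierstrass model via the substitution $(x,y)\mapsto(x/C^2,y/C^3)$ with $C$ maximal subject to $C^4\mid AD^2$ and $C^6\mid BD^3$, observe that every prime of $C$ divides $\gcd(A,B)$ because $D$ is squarefree, and then apply the $S$-integral form of \cite[Theorem~0.7]{HS} with $S$ the set of such primes. The only cosmetic differences are that the paper speaks of the \emph{quasi-minimal} rather than the global minimal model, and invokes \cite[Theorem~0.7]{HS} once in its $S$-integral formulation rather than splitting the appeal into an integral step plus an $S$-correction as you do.
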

\begin{proof}
If $y^2=x^3+AD^2x+BD^3$ gives a quasi-minimal model, so that any $p^4\mid AD^2$ implies that $p^6\nmid BD^3$, then the claim is immediate from \cite[Theorem~0.7]{HS}. Otherwise, the model $y^2=x^3+AD^2x+BD^3$ is not quasi-minimal. Since  $D\in\D$ is assumed to be square-free, any prime that satisfies both $p^4\mid AD^2$ and $p^6\mid BD^3$ must divide $A$ and $B$. Let $S$ be the set of primes dividing both $A$ and $B$. Suppose that $C$ is the largest integer satisfying both $C^4\mid AD^2$ and $C^6\mid BD^3$, then any prime dividing $C$ must be in $S$. If $(x,y)\in\Z^2$ satisfies $y^2=x^3+AD^2x+BD^3$, then $(\frac{x}{C^2},\frac{y}{C^3})$ gives an $S$-integral point on the quasi-minimal model 
$$
y^2=x^3+\frac{AD^2}{C^4}x+\frac{BD^3}{C^6}.
$$ 
Therefore the result follows from \cite[Theorem~0.7]{HS}.
\end{proof}

\begin{proof}[Proof of Theorem~\ref{theorem:tomoments}]
The discriminant of $E_D$ in \eqref{eq:jive}
 is $-16(4A^3+27B^2)D^6$ and the conductor is divisible by primes dividing the discriminant.
Within a quadratic twist family, since $D\in\D$ is a square-free, we deduce that the Szpiro ratio 
is 
\[\sigma_{E_D}\leq 6+\frac{2\log 16|4A^3+27B^2|}{\log |D|}.\]
Thus $\sigma=\sup_{D\in \D}\{\sigma_{E_D}\}\ll 1$, for an implied constant that only depends on  $A$ and $B$.

We shall apply H\"{o}lder's inequality with $p,q>1$ such that $\frac{1}{p}+\frac{1}{q}=1$. This yields
\[
\sum_{D\in\D(N)} \#E^*_D(\Z)^k\leq \#\{D\in\D(N): E^*_D(\Z)\neq \varnothing\}^{\frac{1}{p}}\left(\sum_{D\in\D(N)} \#E_D(\Z)^{qk}\right)^{\frac{1}{q}}.
\]
Applying 
Theorem~\ref{theorem:HS} and the assumption of the theorem, it follows that 
 \[
 \sum_{D\in\D(N)} \#E^*_D(\Z)^k\ll \left(\frac{N}{(\log N)^{\kappa}}\right)^{\frac{1}{p}} c_1^{k}\left(\sum_{D\in\D(N)} (c^{qk\sigma})^{\rank E_D(\Q)}\right)^{\frac{1}{q}},
 \]
where $c_1=c^{\omega(\gcd(A,B))+\sigma}$
and the implied constant depends on $A$ and $B$.
We now apply Theorem~\ref{theorem:Smith} with $m=qk\sigma\log c$. 
Since $k\leq \log\log\log N$, by assumption, we have 
$m\ll \log\log\log N$ if we assume that $q\ll 1$. This  gives 
\[
\sum_{D\in\D(N)} \#E^*_D(\Z)^k\ll \frac{N c_1^{k}c_2^{qk^2}}{(\log N)^{\frac{\kappa}{p}}} ,
\]
where $c_2=\exp(C(\sigma\log c)^2)$.
We now specify the choice of $p,q$ by imposing $q=\frac{1}{\epsilon}$. 
Since $k\leq \log\log\log N$, this leads to 
\begin{align*}
\sum_{D\in\D(N)} \#E^*_D(\Z)^k
&\ll \frac{N}{(\log N)^{(1-\epsilon)\kappa}} \cdot \exp\left(
C_\ve (\log\log\log N)^2\right),
\end{align*}
for a suitable constant $C_\ve$ depending on $A,B$ and $\ve$.
The statement of the theorem easily follows, on redefining $\ve$.
\end{proof}

\section{Counting points on a cubic surface}
\subsection{Preliminaries}

We begin by examining some properties of a binary cubic form $C$, which we  assume to be separable over $\QQ$ and such that $C(1,0)\neq 0$. Denote the discriminant of $C$ by $\Delta(C)$.
Henceforth we allow all implied constants to depend on the coefficients of $C$.
 Since $C$
is separable over $\QQ$,
there exist binary linear forms
 $L_1,L_2,L_3\in \overline{\Q}[y_1,y_2]$
 such that $C=L_1L_2L_3$, with no two factors proportional.
We have the following simple result. 

\begin{lemma}\label{lem:linear}
Assume that $\y=(y_1,y_2)\in \ZZ^2$ such that 
$
|L_1(\y)|\leq |L_2(\y)|\leq |L_3(\y)|.
$
Then $L_2(\y)\gg |\y|$ and $L_3(\y)\ll |\y|$.
\end{lemma}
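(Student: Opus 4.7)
The plan is to exploit the elementary fact that any three linear forms in two variables must be linearly dependent, combined with the pairwise non-proportionality of $L_1,L_2,L_3$.

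First I would dispose of the upper bound $L_3(\y)\ll|\y|$: each $L_i$ is a linear form with coefficients that are bounded in terms of (the coefficients of) $C$, so the trivial estimate $|L_i(\y)|\leq (|\text{coeffs of }L_i|)\cdot|\y|\ll |\y|$ holds uniformly in $i$, and in particular for $i=3$.

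For the lower bound $L_2(\y)\gg|\y|$, the key step is to write down a linear relation among $L_1,L_2,L_3$. Since these are three elements of the $2$-dimensional $\overline{\QQ}$-vector space of linear forms in $y_1,y_2$, they must satisfy a nontrivial relation
\[
a_1 L_1 + a_2 L_2 + a_3 L_3 = 0
\]
with $(a_1,a_2,a_3)\in\overline{\QQ}^3\setminus\{0\}$. The hypothesis that no two $L_i$ are proportional forces all three coefficients $a_i$ to be nonzero (if $a_j=0$ then the remaining two forms would be proportional). Rearranging and applying the hypothesis $|L_1(\y)|\leq|L_2(\y)|$ therefore yields
\[
|L_3(\y)| = \bigl|a_3^{-1}(a_1 L_1(\y)+a_2 L_2(\y))\bigr|\ll |L_1(\y)|+|L_2(\y)|\ll |L_2(\y)|,
\]
with implied constants depending only on the $a_i$ (and hence only on $C$).

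To finish, I would use that $L_2$ and $L_3$ are themselves non-proportional, so they form a basis of the space of linear forms in $y_1,y_2$. Consequently $y_1$ and $y_2$ can each be expressed as a bounded linear combination of $L_2$ and $L_3$, giving
\[
|\y|\ll \max\bigl\{|L_2(\y)|,|L_3(\y)|\bigr\}\ll |L_2(\y)|
\]
by the inequality just established. There is no real obstacle here; the only point that requires care is verifying that the coefficients in the dependence relation are all nonzero, which is exactly the content of the separability/non-proportionality assumption on $C$.
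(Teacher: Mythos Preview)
Your argument is correct. Both proofs handle the upper bound $L_3(\y)\ll|\y|$ trivially, but for the lower bound you take a different route from the paper. The paper normalises to $(1,t)$ with $t\in[-1,1]$ and argues by contradiction: if there were a sequence $t_n$ with $|l_2(t_n)|<1/n$, then the explicit identity $\beta_1 l_2 - \beta_2 l_1 = \beta_1\alpha_2-\alpha_1\beta_2$ together with $|l_1(t_n)|\leq|l_2(t_n)|$ forces the nonzero constant $\beta_1\alpha_2-\alpha_1\beta_2$ to be $\ll 1/n$. Your approach is more structural: you use the linear dependence of three forms in two variables to get $|L_3(\y)|\ll|L_2(\y)|$, and then use that $L_2,L_3$ form a basis to recover $|\y|$. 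This is cleaner and avoids the contradiction machinery entirely. In fact one could streamline it further: since $L_1,L_2$ are already non-proportional they form a basis, so $|\y|\ll\max\{|L_1(\y)|,|L_2(\y)|\}=|L_2(\y)|$ directly, without ever invoking $L_3$ or the three-term dependence relation.
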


\begin{proof}
The upper bound is trivial. To see the lower bound, on renormalising and possibly 
interchanging the roles of $y_1,y_2$, we may assume without loss of generality that 
$y_1=1$ and $y_2=t$ for $t\in [-1,1]$.
Let $l_i(t)=L_i(1,t)$ for $1\leq i\leq 3$. Thus there exist 
non-zero $\alpha_i,\beta_i\in \overline{\Q}$ such that 
$$
l_i(t)=\alpha_i+\beta_it,
$$
with $\alpha_i\beta_j-\alpha_j\beta_i\neq 0$. We wish to prove that $l_2(t)\gg 1$, where the implied constant is allowed to depend on the constants 
$\alpha_i,\beta_i$. We suppose for a contradiction that, for any positive integer $n$, there exists 
$t_n\in [-1,1]$ such that $|l_2(t_n)|<1/n$. Then it follows that 
$$
\beta_1 l_2(t_n)=\beta_1\alpha_2-\alpha_1\beta_2+\beta_2l_1(t_n).
$$
Since $|l_1(t_n)|\leq |l_2(t_n)|<1/n$, this therefore implies that 
$$
0<|\beta_1\alpha_2-\alpha_1\beta_2|\ll \frac{1}{n},
$$
which is a contradiction for sufficiently large $n$.
\end{proof}

Our remaining results in this section concern the solubility of $C(x,1)$ and $C(1,x)$ in residue classes.
Define a multiplicative function 
\begin{equation}\label{eq:defrho}
\varrho(n)\coloneqq \#\{x\in \Z/n\Z:C(x,1)\equiv 0\bmod n\}.\end{equation}
It follows from Chebotarev density theorem and Burnside's lemma (see for example \cite[Proposition~3.10 and Section~3.3.3.5]{SerreNp}) that
\begin{equation}\label{eq:Cheb}
 \sum_{p\leq N}\frac{\varrho(p)}{p}= \lambda\log\log N+O(1),
\end{equation}
where $\lambda$ is the number of irreducible factors of $C(x,1)$ over $\Q$.
It follows from Hensel lifting that 
\begin{equation}\label{eq:hensel}
\varrho(p)=\varrho(p^v), \text{ for any $v\geq 1$ and any prime $p\nmid \Delta(C)$.}
\end{equation}
We may also define
\begin{equation}\label{eq:defrho'}
\varrho'(n)\coloneqq \#\{x\in \Z/n\Z:C(1,x)\equiv 0\bmod n\}.
\end{equation}
It follows from work of 
 Huxley \cite{Hux} that 
 \begin{equation}\label{eq:hux}
\varrho(p^v),\varrho'(p^v)\leq 3p^{\frac{1}{2}v_p(\Delta(C))},
\end{equation}
for any prime power $p^v$.
We proceed by proving the following result. 

\begin{lemma}\label{lemma:meanf}
Let $f$ 
be any multiplicative function satisfying
\[
f(p^v)
\begin{cases}
= \varrho(p^v)&\text{if } p\nmid C(1,0)\Delta(C),\\ 
\in [0, \varrho(p^v)+\varrho'(p^v)]&\text{if } p\mid C(1,0)\Delta(C).
\end{cases}
\]
Then we have
\begin{equation}\label{eq:meanf}\sum_{n\leq N} f(n)\asymp N(\log N)^{\lambda-1},\qquad \sum_{n\leq N} \frac{f(n)}{n}\asymp (\log N)^{\lambda}\quad \text{ and }\quad \sum_{n\leq N} \frac{f(n)}{n^2}\asymp1,
\end{equation}
where the implied constants depend only on $C$.
\end{lemma}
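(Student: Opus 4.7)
All three estimates are standard mean-value assertions about a non-negative multiplicative function whose values at primes average to $\lambda$, and I would prove them through the Dirichlet series
$$F(s) = \sum_{n\geq 1} \frac{f(n)}{n^s} = \prod_p \Bigl(1 + \sum_{v\geq 1}\frac{f(p^v)}{p^{vs}}\Bigr)$$
via the Landau--Selberg--Delange method. The inputs on $f$ are read off from \eqref{eq:Cheb}, \eqref{eq:hensel} and \eqref{eq:hux}: the latter two yield $f(p^v) \leq 3$ at all but finitely many primes and $f(p^v) \ll p^{v_p(\Delta(C))/2}$ in general, while \eqref{eq:Cheb} gives $\sum_{p\leq N} f(p)/p = \lambda\log\log N + O(1)$.

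The third estimate requires nothing analytic. By \eqref{eq:hensel} and \eqref{eq:hux},
$$\sum_{n\geq 1} \frac{f(n)}{n^2} = \prod_p \bigl(1 + O(p^{-2})\bigr),$$
which converges absolutely; the lower bound comes from the $n=1$ term.

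For the first two estimates I would factor $F(s) = \zeta(s)^\lambda G(s)$ with
$$G(s) = \prod_p (1-p^{-s})^\lambda \Bigl(1 + \sum_{v\geq 1} \frac{f(p^v)}{p^{vs}}\Bigr).$$
At primes $p \nmid C(1,0)\Delta(C)$ the local Euler factor expands as $1 + (\varrho(p)-\lambda)p^{-s} + O(p^{-2s})$, and an effective Chebotarev estimate for the splitting field of $C(x,1)$---sharpening \eqref{eq:Cheb} to a remainder $O(1/\log N)$---shows that $G$ extends holomorphically to some half-plane $\Re s > 1-\delta$ with $G(1) > 0$. The Selberg--Delange Tauberian theorem (see e.g.\ Tenenbaum, \emph{Introduction to Analytic and Probabilistic Number Theory}, Theorem~II.5.2) then yields
$$\sum_{n\leq N} f(n) \sim c_1 N(\log N)^{\lambda-1}, \qquad \sum_{n\leq N} \frac{f(n)}{n} \sim c_2 (\log N)^\lambda,$$
with $c_1, c_2 > 0$, which gives the claimed $\asymp$ bounds.

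The main obstacle is the positivity $G(1) > 0$. At good primes this is transparent from the expansion above; at the finitely many bad primes, the local factor is $(1-p^{-1})^\lambda$ times a sum with non-negative terms and leading term $1$, hence strictly positive. The only genuinely non-trivial ingredient is the effective sharpening of \eqref{eq:Cheb} needed to run Selberg--Delange; this is standard via Artin $L$-functions, although some care is required since the splitting field is in general non-abelian. An alternative route via Wirsing's theorem delivers the same asymptotics under cruder hypotheses, bypassing the analytic continuation altogether; for the purely upper-bound halves, Shiu's theorem combined with \eqref{eq:Cheb} suffices.
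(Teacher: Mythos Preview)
Your proposal is correct, but the paper takes the simpler route you mention only at the end. The paper applies Wirsing's theorem (\cite{Wirsing}, Satz~1) directly: from \eqref{eq:hux} one has $f(p^v)=O(1)$ uniformly, and from \eqref{eq:Cheb} (plus the observation that $f(p)=\varrho(p)$ away from the finitely many primes dividing $C(1,0)\Delta(C)$) one has $\sum_{p\leq N}f(p)/p=\lambda\log\log N+O(1)$; Wirsing then yields $\sum_{n\leq N}f(n)\asymp N(\log N)^{\lambda-1}$ immediately, and the other two estimates follow by partial summation. Your Selberg--Delange argument is heavier machinery that buys you genuine asymptotics $\sim c\,N(\log N)^{\lambda-1}$ rather than merely $\asymp$, but at the cost of needing analytic continuation of $G(s)=F(s)\zeta(s)^{-\lambda}$ past $\Re s=1$; this is available via Artin $L$-functions for the permutation representation on the roots of $C(x,1)$ (your phrasing ``effective Chebotarev'' is slightly off --- what you really need is the analytic continuation and zero-free region for these $L$-functions, from which effective Chebotarev is a \emph{consequence}). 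Since only $\asymp$ is required, the paper's approach is the economical one, and you already identified it as an alternative.
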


\begin{proof}
It follows from~\eqref{eq:hux} that 
 $f(p^v)\leq 6 p^{\frac{1}{2}v_p(\Delta(C))}=O(1)$.
Therefore, by \cite[Satz~1]{Wirsing}, we have
\begin{equation}\label{eq:fntofp}
\sum_{n\leq N}f(n)\asymp \frac{N}{\log N}\exp\left(\sum_{p\leq N}\frac{f(p)}{p}\right).
\end{equation}
The value of $f(p)$ is always equal to $\varrho(p)$ unless $p\mid C(1,0)$, so 
\[\sum_{p\leq N}\frac{f(p)}{p}= \sum_{p\leq N}\frac{\varrho(p)}{p}+O\left(\sum_{p\mid C(1,0)\Delta(C)}\frac{\varrho(p)+\varrho'(p)}{p}\right).\]
Since $ C(1,0)\neq 0$, we deduce that the sum over $p\mid C(1,0)\Delta(C)$ has only finitely many terms. We now apply the estimate from~\eqref{eq:Cheb}, together with the trivial bound $\varrho(p),\varrho'(p)\leq p$ for all $p\mid C(1,0)\Delta(C)$. Thus it follows that
\begin{equation}\label{eq:fpmean}
\sum_{p\leq N}\frac{f(p)}{p}=\lambda\log\log N+O(1).
\end{equation}
The first bound in~\eqref{eq:meanf} follows from combining~\eqref{eq:fntofp} and~\eqref{eq:fpmean}.
The second and third bounds in~\eqref{eq:meanf} can be deduced by partial summation.
\end{proof}

\subsection{Proof of the upper bound in Theorem~\ref{theorem:main}}

Recall our convention that all implied constants are allowed to depend on the coefficients of $C$.
We begin by showing that it suffices to assume without loss of generality that $C(1,0)\neq 0$ in the proof. To see this, we first choose  $a\in \ZZ$, with $a=O(1)$,  such that 
$C(1,a)\neq 0$. But then, on making the
change of variables $(x_1,x_2)\mapsto (x_1,x_2+ax_1)$, we obtain 
a new counting problem, in which $B$ is replaced by $2\max\{1,|a|\}B\ll B$  and the relevant 
binary cubic form has non-zero leading coefficient.

Consider a solution $(x_1,x_2,x_3,x_4)\in\Z^4$ to~\eqref{eq:ogeq}. Write $h_1^2h_2=\gcd(x_1,x_2)$, where $h_1,h_2$ are positive integers and $h_2$ is square-free.
The assumption that $\gcd(x_1,x_2,x_4)=1$ implies that $(h_1^2h_2)^3\mid x_3^2$, so $h_1^3h_2^2\mid x_3$.
Define
\[y_1=\frac{x_1}{h_1^2h_2},\qquad 
y_2=\frac{x_2}{h_1^2h_2},\qquad 
u=\frac{x_3}{h_1^3h_2^2},\qquad
v=x_4.
\]
Then~\eqref{eq:ogeq} can be rewritten as
\begin{equation}\label{eq:redeq}
 C(y_1,y_2)=h_2u^2v.
\end{equation}

First fix $h_1,h_2$ and define the quantity
\begin{equation}\label{eq:flat}
N(Y,U,V)\coloneqq\#\left\{(y_1,y_2,u,v)\in\Z^4: \begin{array}{l}
Y\leq \max\{|y_1|,|y_2|\}< 2Y\\ 
U\leq |u|< 2U,~V\leq |v|< 2V\\ 
\gcd(y_1,y_2)=1,~\eqref{eq:redeq}\text{ holds}
\end{array}\right\},
\end{equation}
with $Y, U, V$ in the range
\begin{equation}\label{eq:rangeYUV}
Y< \frac{B}{h_1^2h_2},\qquad
U< \frac{B}{h_1^3h_2^2},\qquad
V<B.\end{equation}
We shall estimate 
$N(Y,U,V)$ using the geometry of numbers, 
by first proving that the equation 
\eqref{eq:redeq} forces the solutions to lie on a small number of lattices. 

\begin{lemma} \label{lemma:latticebd}
Let $\varrho$ and $\varrho'$ be given by~\eqref{eq:defrho} and~\eqref{eq:defrho'}, respectively.
Define a multiplicative function $f$ via
\begin{equation}\label{eq:deff}
f(p^v)=\begin{cases}
 \varrho(p^v)&\text{if } p\nmid C(1,0),\\ 
 \varrho'(p^v)&\text{if } p\nmid C(0,1),\\
 \varrho(p^v)+\varrho'(p^v)&\text{if } p\mid \gcd(C(1,0),C(0,1)).\\
\end{cases}\end{equation}
Then the solutions $(y_1,y_2)\in\Z^2$ to
$$
 C(y_1,y_2)\equiv 0\bmod d.
$$
 such that $\gcd(y_1,y_2)=1$
 are covered by at most $f(d)$ many rank $2$ lattices, each of determinant $d$.
\end{lemma}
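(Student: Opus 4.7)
The plan is to work prime-by-prime and combine via the Chinese Remainder Theorem. Fix a prime power $p^v\|d$; because $\gcd(y_1,y_2)=1$, at least one of $p\nmid y_1$ or $p\nmid y_2$ must hold, and the analysis splits according to which.

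If $p\nmid y_2$, then $y_2$ is invertible modulo $p^v$, so $y_1\equiv \alpha y_2\pmod{p^v}$ for some $\alpha$ satisfying $C(\alpha,1)\equiv 0\pmod{p^v}$; by \eqref{eq:defrho} there are at most $\varrho(p^v)$ such $\alpha$, and each defines a rank-$2$ sublattice of $\Z^2$ of index (hence determinant) $p^v$, with basis $(p^v,0)$ and $(\alpha,1)$. Symmetrically, if $p\nmid y_1$, the solutions lie in one of at most $\varrho'(p^v)$ analogous lattices indexed by the roots of $C(1,\cdot)\pmod{p^v}$.

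The next step is to identify which sub-case is forced at each prime. If $p\nmid C(1,0)$, then $p\mid y_2$ would force $y_1^3 C(1,0)\equiv C(y_1,0)\equiv 0\pmod{p}$ and hence $p\mid y_1$, contradicting $\gcd(y_1,y_2)=1$; so $p\nmid y_2$ and only the $\varrho(p^v)$ lattices from the first family are needed. A dual argument applies when $p\nmid C(0,1)$. Only when $p\mid\gcd(C(1,0),C(0,1))$ can both sub-cases genuinely arise, in which case at most $\varrho(p^v)+\varrho'(p^v)$ lattices are required. In each scenario the local count matches the corresponding branch of the definition \eqref{eq:deff}; when $p\nmid C(1,0)\cdot C(0,1)$ both of the first two branches apply, and one may simply take the first.

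To finish, I combine the local choices via CRT: for each prime power $p^v\|d$, select one lattice from the local collection above. The intersection in $\Z^2$ of the chosen lattices is a rank-$2$ sublattice whose index, by the standard CRT for sublattices of $\Z^2$, equals the product of the local indices $\prod_{p^v\|d}p^v=d$, and therefore has determinant $d$. The number of such combinations is $\prod_{p^v\|d}f(p^v)=f(d)$, and together these lattices cover every coprime solution. The only genuine obstacle is the bookkeeping required to reconcile the three branches in the definition of $f$ with the local analysis; but this is handled by the observation that whenever several branches apply, any one of them already yields a valid cover, so the definition of $f$ is consistent with taking the first applicable branch throughout.
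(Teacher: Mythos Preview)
Your proof is correct and follows essentially the same approach as the paper: reduce to prime powers via CRT, split according to which of $y_1,y_2$ is a unit modulo $p$, observe that $p\mid y_2$ forces $p\mid C(1,0)$ (and dually), and combine. You are a bit more explicit about the CRT recombination of the local lattices and about reconciling the overlapping branches in the definition of $f$, but the argument is the same.
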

\begin{proof}
By the Chinese remainder theorem, it suffices to consider the case when $d=p^v$.
Since $y_1$ and $y_2$ are coprime, $p$ divides at most one of $y_1$ and $y_2$. Suppose $p\nmid y_2$, then we can rewrite the congruence
 $C(y_1,y_2)\equiv 0\bmod p^v$ as
\[C(y_1/y_2,1)\equiv 0\bmod p^v.\] 
Therefore the solutions $(y_1,y_2)$ all lie in a lattice generated by $(p^v,0)$ and $(\alpha,1)$, where $\alpha\in\Z$ is a solution to $C(x,1)\equiv 0\bmod p^v$.
Moreover, there are $\varrho(p^v)$ many choices of $\alpha$ modulo $p^v$.

If $p\mid y_2$, then since $p\nmid y_1$, we must have $p\mid C(1,0)$. In this case, consider $C(1,y_2/y_1)\equiv 0\bmod p^v$. Then the solutions $(y_1,y_2)$ lies in a lattice generated by $(0,p^v)$ and $(1,\beta)$, where $\beta\in\Z$ is a solution to $C(1,\beta)\equiv 0\bmod p^v$. 

Therefore, the number of lattices is bounded by $\varrho(p^v)$ if $p\nmid C(1,0)$ and $\varrho(p^v)+\varrho'(p^v)$ if $p\mid C(1,0)$. By symmetry we obtain the statement in the lemma.
\end{proof}

Using this result we can now proved the following bound. 

\begin{lemma}\label{lemma:ellipse}
Fix non-zero $h_2,u\in \ZZ$. Then 
 \[
 \#\left\{(y_1,y_2,v)\in\Z^4: \begin{array}{l}
Y\leq \max\{|y_1|,|y_2|\}< 2Y\\ 
\gcd(y_1,y_2)=1, ~V\leq |v|< 2V\\ 
\eqref{eq:redeq}\text{ holds}
 \end{array}
 \right\}\ll f(h_2u^2)\left(\frac{V}{Y}+1\right),\]
where $f$ is the multiplicative function defined in~\eqref{eq:deff} and the implied constant depends only on $C$.
\end{lemma}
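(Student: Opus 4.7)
The plan is to combine Lemma~\ref{lemma:latticebd} with the factorisation $C = L_1 L_2 L_3$ over $\QQbar$ and the size information from Lemma~\ref{lem:linear}. For any triple $(y_1, y_2, v)$ in the counting set, the equation $C(y_1, y_2) = h_2 u^2 v$ forces $h_2 u^2 \mid C(y_1, y_2)$; combined with $\gcd(y_1, y_2) = 1$, Lemma~\ref{lemma:latticebd} places $(y_1, y_2)$ in one of at most $f(h_2 u^2)$ rank-two sublattices $\Lambda \subset \ZZ^2$, each of determinant $d = h_2 u^2$. It therefore suffices to prove, for each such $\Lambda$, the bound
\[
\#\{(y_1, y_2) \in \Lambda : \max|y_j| < 2Y,\ h_2 u^2 V \leq |C(y_1, y_2)| < 2 h_2 u^2 V\} \ll \frac{V}{Y} + 1.
\]

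Writing $C = L_1 L_2 L_3$ with no two factors proportional, for each counted $(y_1, y_2)$ I relabel so that $|L_1(y_1, y_2)| \leq |L_2(y_1, y_2)| \leq |L_3(y_1, y_2)|$. Lemma~\ref{lem:linear} then yields $|L_2(y_1, y_2)|, |L_3(y_1, y_2)| \asymp Y$, whence $|L_1(y_1, y_2)| \asymp T := h_2 u^2 V/Y^2$. Summing over the three possible labellings, the task reduces to counting $(y_1, y_2) \in \Lambda$ with $\max|y_j| < 2Y$ and $0 < |L(y_1, y_2)| \ll T$ for each of the three linear factors $L$ of $C$, the lower bound $|L| > 0$ being enforced because $L(y_1, y_2) = 0$ would give $v = 0$, excluded by $|v| \geq V > 0$.

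Fixing $L$ and writing $\Lambda$ as $\{(y_1, y_2) \in \ZZ^2 : y_1 \equiv \alpha y_2 \pmod d\}$ as in Lemma~\ref{lemma:latticebd} (the symmetric variant with the roles of $y_1$ and $y_2$ exchanged is treated identically), I parametrise by $y_1 = \alpha y_2 + dm$ with $m \in \ZZ$. Writing $L = \gamma(x_1 - r x_2)$ for the root $r$ of $L$, the condition $0 < |L(y_1, y_2)| \ll T$ becomes $0 < |m - \beta y_2| \ll V/Y^2$ with $\beta = (r - \alpha)/d$. Together with the box constraints $|y_2| < 2Y$ and $|\alpha y_2 + dm| < 2Y$, this places $(m, y_2)$ in a thin strip of area $\asymp V/Y$ about the line $m = \beta y_2$, and a direct integer-point count in this strip is to yield the desired bound $\ll V/Y + 1$.

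The main obstacle is justifying the strip count uniformly in $\Lambda$: a crude application of Minkowski's second theorem leaves a parasitic perimeter term of size $Y/\lambda_1(\Lambda)$, which can exceed $V/Y$ for lattices whose shortest vector is short. I sidestep this by working in the $(m, y_2)$ coordinates directly: for irrational $\beta$ the bound follows from equidistribution of $\{\beta y_2\}$ in $\RR/\ZZ$, while for $\beta = p/q \in \QQ$ one counts the integer lines $m - \beta y_2 = c$ lying inside the strip, crucially observing that the singular line $c = 0$ would give $v = 0$ and is excluded. When $L$ has non-real coefficients the region $\{|L(y)| \leq T\}$ is instead an ellipse of area $\ll T^2 \leq TY$, and the same method produces a bound no worse than $\ll V/Y + 1$.
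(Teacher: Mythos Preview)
Your opening moves match the paper exactly: apply Lemma~\ref{lemma:latticebd} with $d=h_2u^2$ to confine $(y_1,y_2)$ to one of $f(h_2u^2)$ sublattices $\Lambda$ of determinant $d$, then use Lemma~\ref{lem:linear} to see that the smallest factor satisfies $|L_1(y_1,y_2)|\ll T:=dV/Y^2$, so that $(y_1,y_2)$ is trapped in a region of area $\ll dV/Y$.

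The gap is in what follows. At the point where you write ``It therefore suffices to prove, for each such $\Lambda$, the bound \ldots'', you have silently discarded the condition $\gcd(y_1,y_2)=1$. That condition is not decorative: it is precisely what kills the perimeter term you correctly identify as the obstacle. Your proposed workaround via $(m,y_2)$-coordinates does not help, because a change of basis cannot remove a genuine excess of lattice points. Concretely, take $C(x_1,x_2)=(x_1-x_2)(x_1-2x_2)(x_1-3x_2)$, choose $\alpha=1$ (so $\Lambda=\{y_1\equiv y_2\bmod d\}$), and look at the factor $L=y_1-2y_2$. Then $\beta=1/d$ is rational with $q=d$, and for $d\gg Y$ the integer points $(y_1,y_2)=(-k,-k)$ with $1\le|k|\ll\min(Y,dV/Y^2)$ all lie in your strip with $c=k/d\neq 0$; there are $\gg\min(Y,dV/Y^2)$ of them, which for suitable $d,V,Y$ far exceeds $V/Y+1$. (These points happen to have $C=0$, so they are not in the original counting set, but your displayed ``It suffices'' bound and the subsequent strip count do not exclude them.) The equidistribution appeal for irrational $\beta$ is likewise not a proof: equidistribution of $\{\beta y_2\}$ is an asymptotic statement, not uniform in $\beta=(r-\alpha)/d$, and $\alpha,d$ vary with $\Lambda$.

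The paper's fix is to keep the coprimality condition and invoke \cite[Lemma~2]{HBlattice}: for any sublattice $\Lambda\subset\ZZ^2$ of determinant $d$ and any ellipse $\mathcal{R}$ of area $A$, the number of points of $\Lambda\cap\mathcal{R}$ with $\gcd(y_1,y_2)=1$ is $\ll A/d+1$. One then simply encloses the region $\{\max|y_j|<2Y,\ |L_1(\y)|\ll T\}$ in an ellipse of area $\ll dV/Y$ and reads off the bound $\ll V/Y+1$. The primitivity hypothesis in Heath-Brown's lemma is exactly what absorbs the ``many multiples of a short vector'' scenario that defeats your direct count.
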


\begin{proof}
We begin by applying Lemma~\ref{lemma:latticebd} to $d=h_2u^2$, and proceed to bound the number of solutions $(y_1,y_2)$ that lie in one of the $f(d)$ lattices of determinant $d$.
Note that~\eqref{eq:redeq} has no solution unless 
$h_2\ll Y^3$ and $U\ll Y^{3/2}$.
 By symmetry, on multiplying by $3$, we may assume that 
 $$
 |L_1(y_1,y_2)|\leq |L_{2}(y_1,y_2)|\leq |L_{3}(y_1,y_2)|.
 $$
It follows from Lemma \ref{lem:linear} that 
any such solution must satisfy 
\begin{equation}\label{eq:region}
L_1(y_1,y_2)\ll \frac{h_2U^2V}{Y^2}.
\end{equation}
The region in $\R^2$ cut out by 
$\max\{|y_1|,|y_2|\}<2Y$
 and~\eqref{eq:region} can be placed inside an ellipse 
 $\mathcal{R}\subset \R^2$ 
with area
$$
\vol(\mathcal{R})
\ll Y\cdot \frac{h_2U^2V}{Y^2}= \frac{h_2U^2V}{Y}.
$$
By \cite[Lemma~2]{HBlattice}, there are 
\[\ll \frac{h_2U^2V}{dY}+1\ll \frac{V}{Y}+1\]
many lattice points in the ellipse such that $\gcd(y_1,y_2)=1$.
\end{proof}

\begin{lemma}\label{lemma:cubicub}
In the notation of Theorem~\ref{theorem:main}, we have
$N^\circ(B)\ll B(\log B)^{\max\{\lambda,2\}}$.
\end{lemma}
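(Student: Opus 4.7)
The plan is to combine the geometry-of-numbers estimate in Lemma~\ref{lemma:ellipse} with the dyadic decomposition set up in~\eqref{eq:flat}--\eqref{eq:rangeYUV} and the mean value bounds of Lemma~\ref{lemma:meanf}. As already recorded at the start of Section~3.2, a linear change of variables $(x_1,x_2)\mapsto(x_1,x_2+ax_1)$ reduces to the case $C(1,0)\neq0$, and the parametrization $\gcd(x_1,x_2)=h_1^2h_2$ with $h_2$ square-free (together with the coprimality condition $\gcd(x_1,x_2,x_4)=1$, which forces $h_1^3h_2^2\mid x_3$) delivers the reduced equation~\eqref{eq:redeq} in primitive coordinates $(y_1,y_2,u,v)$ obeying~\eqref{eq:rangeYUV}. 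So the first step is to fix this framework and write $N^\circ(B)\ll \sum_{h_1,h_2}\sum_{Y,U,V}N(Y,U,V)$, the inner sum being over dyadic $Y,U,V$.

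Next I would apply Lemma~\ref{lemma:ellipse} for each fixed $u\sim U$ and sum trivially to obtain the bound $N(Y,U,V)\ll (V/Y+1)\sum_{u\sim U}f(h_2u^2)$. To evaluate the inner sum, I would exploit the fact that $h_2$ is square-free and $f$ is multiplicative: decomposing $u=u_1u_2$ with $u_1\mid h_2^\infty$ and $\gcd(u_2,h_2)=1$ gives $f(h_2u^2)=f(h_2u_1^2)f(u_2^2)$, which separates the variables. The $u_2$-sum is handled by Lemma~\ref{lemma:meanf} (applied to the multiplicative function $u\mapsto f(u^2)$, which has the same Euler factors at good primes as $f$), producing a bound of the shape
\[
\sum_{u\sim U}f(h_2u^2)\ll F(h_2)\,U(\log U)^{\lambda-1}
\]
for some multiplicative $F(h_2)$ obeying $\sum F(h_2)/h_2\ll(\log)^\lambda$ and $\sum F(h_2)/h_2^2\ll1$, once the finitely many local factors at primes dividing $h_2$ are absorbed.

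It remains to carry out the dyadic sums. The constraint $|C(y_1,y_2)|\ll Y^3$ combined with~\eqref{eq:redeq} forces $h_2U^2V\ll Y^3$, so $V\le V_{\max}:=\min(B,Y^3/(h_2U^2))$; summing $(V/Y+1)$ over dyadic $V\le V_{\max}$ gives $V_{\max}/Y+\log B$. I would then split $U$ at the transition point $U_1:=\sqrt{Y^3/(h_2B)}$ where $V_{\max}$ switches between $B$ and $Y^3/(h_2U^2)$, collapse each dyadic geometric series at the transition, and finally sum over dyadic $Y\le B/(h_1^2h_2)$ and over $(h_1,h_2)$ using the mean values of $F$ from Lemma~\ref{lemma:meanf} together with $\sum_{h_1}1/h_1\ll\log B$. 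The main obstacle is precisely this last stage: the bookkeeping across five dyadic variables must be performed so that the accumulated logarithmic factors combine into $(\log B)^{\max\{\lambda,2\}}$ rather than some larger power. The $(\log B)^\lambda$ arises from the $h_2$-sum weighted by $F(h_2)/h_2$, mirroring the pole order of $\sum f(n)/n^s$ at $s=1$, while the floor of $2$ in $\max\{\lambda,2\}$ reflects the residual dyadic log factors that appear once $\lambda=1$ and that cannot be absorbed into the $h_2$-sum.
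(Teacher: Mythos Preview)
Your proposal follows essentially the same route as the paper: apply Lemma~\ref{lemma:ellipse}, control $\sum_{u\sim U} f(h_2u^2)$ via Lemma~\ref{lemma:meanf}, and then carry out the dyadic summation over $U,V,Y,h_1,h_2$. The paper streamlines your execution in two places: it replaces the $u=u_1u_2$ decomposition by the pointwise inequality $f(h_2u^2)\ll f(h_2)f(|u|)$ (obtained from the Chinese remainder theorem together with~\eqref{eq:hensel} and~\eqref{eq:hux}), and it avoids your transition-point split by summing $U$ geometrically first---up to $U\ll (Y^3/(h_2V))^{1/2}$ for the $V/Y$ contribution and up to $U\ll B/(h_1^3h_2^2)$ for the $+1$ contribution---after which the $V,Y$ sums and then the $h_1,h_2$ sums (using the second and third estimates in~\eqref{eq:meanf}) fall out directly.
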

\begin{proof}
Recall the definition~\eqref{eq:flat} of $N(Y,U,V)$. We apply Lemma~\ref{lemma:ellipse} and sum over $U\leq |u|<2U$. 
Lemma~\ref{lemma:meanf} implies that 
$$
\sum_{U\leq |u|<2U}f(|u|)\ll U.
$$
Now it follows from combining the Chinese remainder theorem with~\eqref{eq:hensel} and~\eqref{eq:hux}
that $f(h_2u^2)\ll f(h_2)f(|u|)$. Hence we deduce that
\[N(Y,U,V)\ll \sum_{U\leq |u|<2U}f(h_2u^2)\left(\frac{V}{Y}+1\right)
\ll f(h_2)\cdot U\cdot\left(\frac{V}{Y}+1\right).
\]
Note that $N(Y,U,V)=0$ unless $h_2U^2V\ll Y^3$, so we can impose $U\ll (Y^3/(h_2V))^{1/2}$.
Now summing $N(Y,U,V)$ over $U,V,Y$ being powers of $2$ subject to~\eqref{eq:rangeYUV}, we have
\[\sum_{U,V,Y}N(Y,U,V)
\ll f(h_2)
\sum_{Y,V}\left(\frac{V^{1/2}Y^{1/2}}{h_2^{1/2}}+\frac{B}{h_1^3h_2^2}\right)\ll \frac{f(h_2)B}{h_1h_2}+\frac{f(h_2)B}{h_1^3h_2^2}(\log B)^2.\]
Finally, we sum over $h_1,h_2$ and apply Lemma~\ref{lemma:meanf} to bound 
$$
\sum_{h_2\leq B}\frac{f(h_2)}{h_2} \quad \text{ and } \quad \sum_{h_2\leq B}\frac{f(h_2)}{h_2^2}.
$$ 
This leads to the upper bound
$N^\circ(B)\ll B(\log B)^{\max\{\lambda,2\}}$.
\end{proof}

\subsection{Proof of the lower bound in Theorem~\ref{theorem:main}}
\begin{lemma}\label{lemma:cubiclb}
In the notation of Theorem~\ref{theorem:main}, we have
$N^\circ(B)\gg B$.
\end{lemma}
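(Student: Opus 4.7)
The plan is to produce $\gg B$ valid tuples by taking $x_3=u$ for $u$ ranging over positive integers up to $cB$ (for some small $c=c(C)>0$), and for each such $u$ counting primitive pairs $(y_1,y_2)\in\ZZ^2$ with
\[
|y_1|,|y_2|\leq Y_u\coloneqq \kappa(Bu^2)^{1/3},\qquad u^2\mid C(y_1,y_2),\qquad C(y_1,y_2)\neq 0,
\]
where $\kappa>0$ is chosen so that $v\coloneqq C(y_1,y_2)/u^2$ satisfies $|v|\leq B$ automatically.  For each such pair the quadruple $(y_1,y_2,u,v)$ lies in the set counted by $N^\circ(B)$: the bounds on $|x_i|$ hold by construction, and primitivity $\gcd(y_1,y_2)=1$ forces $\gcd(y_1,y_2,v)=1$ since $\gcd(y_1,y_2,v)$ divides $\gcd(y_1,y_2)$.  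Because different $u$ give different quadruples, the totals add.  As in the upper-bound proof I first perform the translation $(x_1,x_2)\mapsto(x_1,x_2+ax_1)$ to assume $C(1,0)\neq 0$, at the cost of replacing $B$ by a constant multiple.

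Fix $u$.  By Lemma~\ref{lemma:latticebd}, the primitive pairs with $u^2\mid C(y_1,y_2)$ lie in at most $f(u^2)$ lattices $\Lambda_\alpha\subset\ZZ^2$ of determinant $u^2$, each with basis $(u^2,0),(\alpha,1)$ for $\alpha$ a root of $C(x,1)\equiv 0\pmod{u^2}$.  Parametrising $(y_1,y_2)=(mu^2+n\alpha,n)$, a direct geometry-of-numbers count of the $(m,n)$ with $|n|\leq Y_u$ and $|mu^2+n\alpha|\leq Y_u$ gives $\asymp Y_u^2/u^2$ lattice points, uniformly for $1\leq u\leq cB$ (when $Y_u<u^2$ each $n$ admits at most one admissible $m$, but the proportion of such $n$ is $\asymp Y_u/u^2$, yielding the same total).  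A M\"obius inversion over $d=\gcd(y_1,y_2)$, exploiting the identity $\Lambda_\alpha\cap d\ZZ^2=d\Lambda_\alpha$ for $\gcd(d,u)=1$, then extracts a positive proportion of primitive representatives, uniformly in $u$ and~$\alpha$.

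Summing over $u$ and over the roots $\alpha$ produces
\[
N^\circ(B)\gg B^{2/3}\sum_{u\leq cB}\frac{\varrho(u^2)}{u^{2/3}}.
\]
Applying \eqref{eq:hensel} to replace $\varrho(u^2)$ by $\varrho(u)$ for $u$ coprime to $\Delta(C)$, and observing that $\varrho$ itself satisfies the hypothesis of Lemma~\ref{lemma:meanf}, one has $\sum_{u\leq U}\varrho(u)\gg U(\log U)^{\lambda-1}$.  Partial summation then gives $\sum_{u\leq U}\varrho(u)/u^{2/3}\gg U^{1/3}(\log U)^{\lambda-1}$.  Setting $U\asymp B$ and noting that $\lambda\geq 1$ for any separable real cubic form $C$, we obtain $N^\circ(B)\gg B(\log B)^{\lambda-1}\geq B$, as required.

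The main technical hurdle is the uniform lower bound on the count of \emph{primitive} lattice points in $\Lambda_\alpha\cap[-Y_u,Y_u]^2$ when $u>B^{1/4}$: in that regime $Y_u<u^2$, the $m$-interval for each fixed $n$ has length below one, and the main term $\asymp Y_u^2/u^2$ has to be extracted via equidistribution of $n\alpha\bmod u^2$, after which a sieve on $\gcd(m,n)=1$ imposes primitivity with absolute, uniform constants.  All other ingredients --- the lattice description from Lemma~\ref{lemma:latticebd}, the Hensel-lifting identity~\eqref{eq:hensel} and the mean value from Lemma~\ref{lemma:meanf} --- are already in place from the preceding subsection.
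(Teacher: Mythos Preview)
The step you yourself flag as the ``main technical hurdle'' is a genuine gap, not a deferred routine verification.  The claim that each lattice $\Lambda_\alpha$ contains $\asymp Y_u^2/u^2$ \emph{primitive} points in $[-Y_u,Y_u]^2$, uniformly in $u$ and $\alpha$, fails whenever $\Lambda_\alpha$ has a very short first minimum $\lambda_1$: the lattice points in the box then cluster along multiples of that short vector, only $O(1)$ of which are primitive in $\ZZ^2$, and your M\"obius sum picks up an error of order $Y_u(\log Y_u)/\lambda_1$ which swamps the putative main term once $\lambda_1\ll u^2/Y_u$.  The ``equidistribution of $n\alpha\bmod u^2$'' you invoke is unavailable for the same reason: on the short range $|n|\leq Y_u\ll u^2$ the discrepancy of $n\alpha\bmod u^2$ cannot be bounded uniformly in $\alpha$ --- indeed a small $\lambda_1$ is precisely the statement that some $n\alpha$ with $|n|$ small lands near $0$.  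There is a second, independent problem: summing the per-lattice counts over the $\varrho(u^2)$ roots $\alpha$ does not lower-bound the number of \emph{distinct} quadruples $(y_1,y_2,u,v)$, since a primitive pair with $\gcd(y_2,u)>1$ can sit in several $\Lambda_\alpha$ simultaneously.

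The paper avoids both issues by asking for far less.  Rather than $\asymp Y_u^2/u^2$ primitive points per lattice, it takes the box side to be $\asymp x_3$ (not $(Bx_3^2)^{1/3}$) and invokes Minkowski's theorem once per pair $(x_3,\alpha)$ to produce a single non-trivial lattice point $(x_1,x_2)$ with $|x_1|,|x_2|\ll x_3$; then $|x_4|=|C(x_1,x_2)|/x_3^2\ll x_3\leq B$ automatically.  Summing one point over $x_3\leq B$ coprime to $\Delta(C)$ and over the $\rho(x_3)$ roots already yields $\sum_{x_3}\rho(x_3)\gg B$ by Lemma~\ref{lemma:meanf}, with no uniform primitive-point count and no equidistribution input needed.
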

\begin{proof}
For each positive integer $x_3$ that is coprime to $\Delta(C)$, 
it follows from~\eqref{eq:hensel} and the Chinese remainder theorem that
there are $\rho(x_3^2)=\rho(x_3)$ solutions $\alpha\bmod x_3^2$ to the congruence 
$$
C(x,1)\equiv 0\bmod x_3^2.
$$
Each $\alpha$ give rise to a lattice generated by $(x_3^2,0)$ and $(\alpha,1)$, which produces solutions to $C(x_1,x_2)\equiv 0\bmod x_3^2$.
In each lattice, Minkowski's theorem implies that
$C(x_1,x_2)\equiv 0\bmod x_3^2$
has at least one non-trivial solution $(x_1,x_2)$ such that $x_1,x_2\ll x_3$.
With such a solution, write
\[C(x_1,x_2)=x_3^2x_4.\]
Then $x_3^2x_4=C(x_1,x_2)\ll x_3^3$, so $x_4\ll x_3$.
Varying $x_3$ over integers between $1$ and $B$ such that $x_3$ is coprime to $\Delta(C)$, it follows from Lemma~\ref{lemma:meanf} that
\[\sum_{\substack{1\leq x_3\leq B\\\gcd(x_3,\Delta(C))=1}}\rho(x_3)\gg B
\]
which thereby gives the lower bound.
\end{proof}

Theorem~\ref{theorem:main} follows immediately from Lemmas~\ref{lemma:cubicub} and~\ref{lemma:cubiclb}.

\section{Integral points on quadratic twists with large $\gcd(x,D)$}
Fix integers $A,B$ such that $x^3+Ax+B$ is irreducible over $\Q$.
Define
\[E_D:y^2=x^3+AD^2x+BD^3,\]
where $D\in \D$.
The discriminant of $E_D$ is
\[\Delta(E_D)=-16(4A^3+27B^2)D^6.\]
We henceforth take $C$ to be the binary cubic form such that
\[C(x,D)=x^3+AD^2x+BD^3.\]
Note that $x^3+Ax+B$ being irreducible implies that $C$ is also irreducible.
\begin{lemma}
Assume that $C(x_1,x_2)\in \Z[x_1,x_2]$ is irreducible.
Recall the definition of $\varrho$ from~\eqref{eq:defrho}.
Then
\begin{equation}\label{eq:posrhomertens}
\sum_{\substack{p\leq N\\ \varrho(p)\geq 1}}\frac{1}{p}\leq \frac{2}{3}\log\log N+O(1)\end{equation}
and
\begin{equation}\label{eq:posrhodensity}
\#\{n\leq N: \varrho(n)\geq 1\}\ll N(\log N)^{-\frac{1}{3}}.\end{equation}
\end{lemma}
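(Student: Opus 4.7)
The plan is to exploit the structure of the Galois group $G$ of the splitting field $K$ of the irreducible cubic $C(x,1)$ over $\QQ$. First I would note that $C(1,0)\neq 0$, since otherwise $x_2$ would be a factor of $C$, contradicting irreducibility; hence $C(x,1)$ is a genuine irreducible cubic of degree three. Its Galois group acts transitively on the three roots and is therefore isomorphic to either $S_3$ or $C_3$. For any prime $p$ not dividing the leading coefficient of $C(x,1)$ and not dividing $\Delta(C)$, the quantity $\varrho(p)$ coincides with the number of fixed points of any Frobenius element at $p$ acting on these three roots. A direct enumeration then gives the proportion of elements of $G$ with at least one fixed point: four out of six (the identity together with the three transpositions) when $G\cong S_3$, and one out of three (just the identity) when $G\cong C_3$. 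In either case this proportion is at most $2/3$.

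From this, I would derive \eqref{eq:posrhomertens} by combining the effective Chebotarev density theorem with partial summation (a Mertens-type refinement applied to the Frobenius conjugacy classes of $K/\QQ$). The finitely many primes dividing the leading coefficient or $\Delta(C)$ contribute only $O(1)$ to the sum, via the trivial bound $\varrho(p)\leq 3$.

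To deduce \eqref{eq:posrhodensity}, the key observation is that the indicator function $g(n)\coloneqq\mathbf{1}[\varrho(n)\geq 1]$ is multiplicative, as it inherits this property from $\varrho$. Since $0\leq g\leq 1$, the Wirsing-type mean value bound in \cite[Satz~1]{Wirsing} (already invoked in the proof of Lemma~\ref{lemma:meanf}) yields
\[
\sum_{n\leq N} g(n)\ll \frac{N}{\log N}\exp\!\left(\sum_{p\leq N}\frac{g(p)}{p}\right),
\]
and inserting \eqref{eq:posrhomertens} produces the desired bound $\ll N(\log N)^{-1/3}$.

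The only real obstacle is the Chebotarev step, and that turns out to be a short piece of Burnside-style bookkeeping rather than any delicate analytic input; the multiplicativity of $g$ together with the Wirsing machinery already deployed in Lemma~\ref{lemma:meanf} then takes care of everything else. It is worth emphasising that the constant $2/3$ is sharp exactly in the $S_3$ case and could be sharpened to $1/3$ in the $C_3$ case, but since only the upper bound is required later, I would not pursue this refinement.
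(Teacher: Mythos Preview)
Your proposal is correct and follows essentially the same route as the paper: identify the Galois group of $C(x,1)$ as $S_3$ or $C_3$, compute the density of primes with $\varrho(p)\geq 1$ via Chebotarev (at most $2/3$ in either case) to obtain \eqref{eq:posrhomertens}, and then feed this into Wirsing's theorem applied to the multiplicative indicator $g(n)=\mathbf{1}[\varrho(n)\geq 1]$ to deduce \eqref{eq:posrhodensity}. The paper's proof is terser but structurally identical; your explicit fixed-point count and the remark that $C(1,0)\neq 0$ are helpful elaborations rather than departures.
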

\begin{proof}
Since $C$ is irreducible, the Galois group of $C(x,1)$ is either $S_3$ or $C_3$.
The density of primes such that $\varrho(p)\geq 1$ is $\frac{2}{3}$ when the Galois group of is $S_3$, and $\frac{1}{3}$ instead when the Galois group is $C_3$.
The first claim~\eqref{eq:posrhomertens} is a consequence of the Chebotarev density theorem. The second claim~\eqref{eq:posrhodensity} follows from~\eqref{eq:posrhomertens} and \cite[Satz~1]{Wirsing}.
\end{proof}

We first show that for almost all $D\in\D$, there is no integral point $(x,y)\in E_D(\Z)$ with large $\gcd(x,D)$.

\begin{lemma}\label{eq:largegcd}
Let $\kappa>0$.
We have
$$
\#\left\{D\in\D(N): 
\begin{array}{l}
\gcd(x(P),D)\geq N(\log N)^{-\kappa}\\
\text{for some }P\in E_D(\Z)
\end{array}
\right\}
\ll_{\kappa} N(\log N)^{-\frac{1}{3}}\log\log N.
$$
\end{lemma}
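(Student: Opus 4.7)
The plan is to extract, from any hypothetical integral point $P=(x_P,y_P)\in E_D(\Z)$ with $g\coloneqq\gcd(x_P,D)$ large, a strong local constraint on every prime divisor of $g$: namely that $\varrho(p)\geq 1$. Once this is established, the density bound~\eqref{eq:posrhodensity} can be applied to the divisor $g$ of $D$, after which the $D$-count is a short calculation.

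The first step is a local valuation analysis at each prime $p\mid g$. Since $D\in\D$ is squarefree, $v_p(D)=v_p(g)=1$ for every such $p$. Inspecting the valuations of the three summands of $y_P^2=x_P^3+AD^2x_P+BD^3$, for each $p\mid g$ outside the finite set of primes dividing $AB\Delta(C)$ one obtains $v_p(y_P^2)\geq 3$, and the substitution $(x_P,D)=(p\tilde{x},p\tilde{D})$ with $p\nmid\tilde{D}$ yields
\[
y_P^2/p^3 \;=\; \tilde{x}^3+A\tilde{D}^2\tilde{x}+B\tilde{D}^3.
\]
Since $v_p(y_P^2)$ is even but $v_p(p^3)=3$ is odd, the right hand side must be divisible by $p$; dividing through by $\tilde{D}^3$, which is a unit modulo $p$, shows that $\tilde{x}/\tilde{D}\bmod p$ is a root of $x^3+Ax+B$, and this is exactly the statement $\varrho(p)\geq 1$. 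The exceptional primes $p\mid AB\Delta(C)$ are fixed by $A$ and $B$ alone, so they contribute at most a bounded multiplicative factor to $g$ and may be absorbed into implicit constants.

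The second step is a straightforward counting argument. Writing $D=gD'$, the hypothesis $g\geq N(\log N)^{-\kappa}$ forces $|D'|\leq(\log N)^{\kappa}$. Since $\varrho$ is multiplicative and non-negative, any squarefree $g$ whose prime factors all satisfy $\varrho(p)\geq 1$ itself satisfies $\varrho(g)\geq 1$, and therefore lies in the set counted by~\eqref{eq:posrhodensity}. Summing first over $D'$ and then applying~\eqref{eq:posrhodensity} to the inner sum over $g\leq N/|D'|$ gives
\[
\sum_{|D'|\leq(\log N)^{\kappa}}\frac{N/|D'|}{(\log(N/|D'|))^{1/3}} \;\ll\; \frac{N}{(\log N)^{1/3}}\sum_{|D'|\leq(\log N)^{\kappa}}\frac{1}{|D'|} \;\ll_{\kappa}\; \frac{N\log\log N}{(\log N)^{1/3}},
\]
where for $|D'|\leq(\log N)^{\kappa}$ we used $\log(N/|D'|)\asymp\log N$. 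This is the desired bound.

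The main technical nuisance is keeping the valuation identity clean at the exceptional primes dividing $AB\Delta(C)$; since these are a finite set depending only on $A$ and $B$, the resulting correction to $g$ is uniformly bounded and does not affect the asymptotic. Beyond this book-keeping, the proof is essentially a one-line consequence of the local-to-global observation in step one combined with the Chebotarev-type estimate~\eqref{eq:posrhodensity}.
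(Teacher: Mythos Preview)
Your proposal is correct and follows essentially the same approach as the paper. The only cosmetic difference is that the paper performs the substitution globally---writing $x=g\tilde{x}$, $D=g\tilde{D}$, $y=g^2\tilde{y}$ and obtaining $g\tilde{y}^2=C(\tilde{x},\tilde{D})$ in one step---which yields $\varrho(g)\geq 1$ directly and makes your discussion of exceptional primes dividing $AB\Delta(C)$ unnecessary (indeed your prime-by-prime valuation argument already works at every $p\mid g$ without exclusions, since $v_p(D)=1$ and the parity argument goes through regardless of $v_p(A)$ or $v_p(B)$); the paper's decomposition $|D|=ab$ with $a$ the product of primes $p\mid D$ satisfying $\varrho(p)\geq 1$ is likewise a minor variant of your $D=gD'$.
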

\begin{proof}
Suppose $(x,y)\in E_D(\Z)$ is such that $g\coloneqq \gcd(x,D)\geq N(\log N)^{-\kappa}$.
Write $x=g\tilde{x}$, $D=g\tilde{D}$. 
Then from the equation for $E_D$, we see that $g^3\mid y^2$. Since $D$ is square-free, $g$ must also be square-free, so $g^2\mid y$. Write $y=g^2\tilde{y}$. Then we can rewrite the equation as 
$$
g\tilde{y}^2=\tilde{x}^3+A\tilde{D}^2\tilde{x}+B\tilde{D}^3=C(\tilde{x},\tilde{D}).
$$
Since $D$ is square-free, we have that $\gcd(g,\tilde{D})=1$. Thus we deduce 
that $C(\tilde{x}/\tilde{D},1)\equiv 0\bmod g$, whence $\varrho(g)\geq 1$.

Factorise $|D|$ uniquely as a product $|D|=ab$, where $a$ is the product of all prime divisors of $D$ such that $\varrho(p)\geq 1$. Then $g$ must be a divisor of $a$, so $b\leq N/g\leq (\log N)^{\kappa}$.
Using~\eqref{eq:posrhodensity} to bound the number of $a$, we get that the number of possible $D$ is bounded by
\[2\sum_{b\leq (\log N)^{\kappa}}\sum_{\substack{ a\leq N/b\\ \varrho(a)\geq 1}}1
\ll
\sum_{b\leq (\log N)^{\kappa}}\frac{N/b}{(\log (N/b))^{\frac{1}{3}}}
\ll_{\kappa} \frac{N\log\log N}{(\log N)^{\frac{1}{3}}},
\]
as claimed.
\end{proof}

\section{Integral points on quadratic twists with small $\gcd(x,D)$}

\subsection{Preliminaries}
Consider a binary quartic form of the form 
\[f(X,Y)=a_0X^4+4a_1X^3Y+6a_2X^2Y^2+4a_3XY^3+a_4Y^4,\quad a_i\in\Z.\]
The invariants of $f$ are
\begin{align*}
I=I(f)&=a_0a_4-4a_1a_3+3a_2^2,\text{ and }\\
J=J(f)&=a_0a_2a_4-a_0a_3^2-a_1^2a_4+2a_1a_2a_3-a_2^3.
\end{align*}
The discriminant of $f$ is 
\begin{align*}
\Delta (f)=~& I^3-27J^2\\
 =~& a_0^3 a_4^3- 64 a_1^3a_3^3- 18 a_0^2 a_2^2 a_4^2 - 12 a_0^2a_1 a_3 a_4^2 - 6 a_0 a_1^2 a_3^2 a_4\\&\quad
 - 180 a_0 a_1 a_2^2 a_3 a_4 
 + 81 a_0 a_2^4 a_4
 + 36 a_1^2a_2^2 a_3^2
 - 27 (a_0^2 a_3^4+a_1^4a_4^2) \\&\quad
 + 54 a_2 (-a_2^2 + 2 a_1 a_3 + a_0 a_4) (a_4 a_1^2 + a_0 a_3^2).
\end{align*}
The seminvariants attached to the form are $I$, $J$, $a=a(f)=a_0$,
$$H=H(f)=a_1^2-a_0a_2,$$
and 
$$
R=R(f)=2a_1^3+a_0^2a_3-3a_0a_1a_2.
$$
In particular $H$ is the leading coefficient of the quartic covariant
\begin{equation}\label{eq:Gf}
\begin{split}
G_f(X)=~& (a_1^2 - a_0 a_2)X^4 +2 (a_1 a_2 - a_0 a_3)X^3\\
&\quad + (3 a_2^2 - a_0a_4 - 2 a_1 a_3)X^2 +2 (a_2 a_3 - a_1 a_4)X+(a_3^2 - a_2 a_4),
\end{split}
\end{equation}
and $R$ is the leading coefficient of the sextic covariant of $f$. Comparing to the formulas in~\cite[Section~4.1.1]{Cremona}, here we have removed a factor of $-48$ from $H$ and the quartic covariant, a factor of $32$ from $R$, a factor of $12$ from $I$, a factor of $432$ from their $J$, and a factor of $256\cdot 27$ from $\Delta$.

Given $\gamma=
(\begin{smallmatrix} a& b\\c&d\end{smallmatrix})\in\SL_2(\Z)$, define 
$\gamma\cdot (X,Y)=(aX+bY,cX+dY)$
and
\[(\gamma\cdot f)(X,Y)=f(\gamma\cdot (X,Y))=f(aX+bY,cX+dY).\]
Furthermore, define the action of $\gamma\in\SL_2(\Z)$ on $(f,(\alpha,\beta))$ by
\begin{equation}\label{eq:action}
\gamma\cdot (f,(\alpha,\beta))=(\gamma\cdot f,\gamma^{-1}\cdot(\alpha,\beta)),\end{equation}
and observe that the value of $f(\alpha,\beta)$ is preserved under this action.

\subsection{Quartic forms associated to integral points}

Given $P=(x_0,y_0)\in E_D(\Z)$, we may write down a corresponding quartic form
\begin{equation}\label{eq:fp}
f_P(X,Y)=X^4-6x_0X^2Y^2+8y_0XY^3+(-4AD^2-3x_0^2)Y^4.
\end{equation}
Here $\Delta(f)=\Delta(E_D)=-16(4A^3+27B^2)D^6$, $I(f)=-4AD^2$ and $J(f)= -4BD^3$. This construction is due to Mordell \cite[Chapter~25]{Mordell}. (See also \cite[Section~2.2]{AlpogeHo}.)

\begin{lemma}\label{lemma:defFP}
Suppose $P=(x_0,y_0)\in E_D(\Z)$.
Let $M$ be any integer such that $M\mid D$ and $\gcd(M,2x_0)=1$. Take any integer $k$ such that $k\equiv y_0x_0^{-1} \bmod |M|$.
Then
\begin{equation}\label{eq:defFP}
 F_P(X,Y)=\frac{1}{M^3}f_P(MX+kY,Y)\end{equation}
 is integral and satisfies
\begin{itemize}
 \item $F_P(1,0)=M$;
 \item $I=-4A(D/M)^2$ and $J=-4B(D/M)^3$;
 \item $\Delta(F)=\Delta(f)/M^6=-16(4A^3+27B^2)(D/M)^6$.
\end{itemize}
\end{lemma}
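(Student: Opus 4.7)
The plan is a direct verification. I would expand $f_P(MX+kY,Y)$ as a polynomial in $X,Y$, check that each of its five coefficients is divisible by $M^3$, and then read off the invariants from the transformation weights.

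Writing out the expansion, the coefficients of $X^{i} Y^{4-i}$ for $i = 4, 3, 2, 1, 0$ come out to
\[M^4,\quad 4M^3 k,\quad 6M^2(k^2 - x_0),\quad 4M(k^3 - 3x_0 k + 2y_0),\quad k^4 - 6x_0 k^2 + 8 y_0 k - 4AD^2 - 3x_0^2.\]
The first two clearly lie in $M^3\Z$, and the case $x_0 = 0$ is trivial since $\gcd(M,2x_0)=1$ then forces $|M|=1$. Assuming henceforth that $x_0 \neq 0$ and using $\gcd(M, 2x_0) = 1$, integrality of $F_P$ reduces to the three divisibilities $M \mid k^2 - x_0$, $M^2 \mid k^3 - 3x_0 k + 2y_0$, and $M^3 \mid k^4 - 6x_0 k^2 + 8y_0 k - 4AD^2 - 3x_0^2$.

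The key ingredients are $M \mid D$, which applied to $y_0^2 = x_0^3 + AD^2 x_0 + BD^3$ yields $y_0^2 \equiv x_0^3 \pmod{M^2}$, and the congruence $kx_0 \equiv y_0 \pmod M$. Squaring the latter and reducing by the former gives $k^2 \equiv x_0 \pmod M$, handling the $X^2 Y^2$ coefficient. For the remaining two, I would set $k^2 - x_0 = Ma'$ and $kx_0 - y_0 = Mb'$, and expand $y_0^2 = (kx_0 - Mb')^2 \equiv x_0^3 \pmod{M^2}$ to obtain the Hensel-type relation
\[x_0 a' \equiv 2 k b' \pmod M.\]
Writing $k^3 - 3x_0 k + 2y_0 = M(ka' - 2b')$, the $XY^3$ divisibility then follows on multiplying $ka'-2b'$ by $x_0$ and invoking $k^2 \equiv x_0 \pmod M$. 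For the $Y^4$ coefficient the cleanest move is to eliminate $AD^2$ via $AD^2 x_0 = y_0^2 - x_0^3 - BD^3$, yielding the identity
\[x_0\left(k^4 - 6x_0 k^2 + 8y_0 k - 4AD^2 - 3x_0^2\right) = x_0(k^2 - x_0)^2 - 4(kx_0 - y_0)^2 + 4BD^3.\]
Since $M^3 \mid 4BD^3$, the claim reduces to $M \mid x_0 a'^2 - 4 b'^2$, which follows by squaring the displayed Hensel relation and using $k^2 \equiv x_0 \pmod M$.

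The invariant formulas then fall out from the standard weights. The substitution $(X, Y) \mapsto (MX + kY, Y)$ has determinant $M$, so scales $I$, $J$, $\Delta$ by $M^4$, $M^6$, $M^{12}$; dividing the form by $M^3$ rescales them further by $M^{-6}$, $M^{-9}$, $M^{-12}$. Combined with $I(f_P) = -4AD^2$, $J(f_P) = -4BD^3$, $\Delta(f_P) = -16(4A^3+27B^2)D^6$, the three bullet points follow, and $F_P(1, 0) = M^{-3} f_P(M, 0) = M$ is immediate. The main technical obstacle is the $Y^4$ coefficient, which demands an extra factor of $M$ beyond what the naive relation $k^2 \equiv x_0 \pmod M$ supplies; the identity above is what enables this, by trading $AD^2$ for $y_0^2 - x_0^3$, which the curve equation renders divisible by $M^2$.
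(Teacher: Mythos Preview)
Your proof is correct and takes a genuinely different route from the paper. The paper first observes that any two admissible choices of $k$ differ by a multiple of $M$, so the resulting forms $F_P$ are related by an $\SL_2(\ZZ)$-translate and it suffices to verify integrality for a single well-chosen $k$. It then invokes Hensel lifting to pick $k$ satisfying the stronger congruence $x_0 \equiv k^2 \bmod |M|^3$, from which it derives $y_0 \equiv k^3 + AD^2/(2k) \bmod |M|^3$; with these in hand the divisibility of each coefficient is immediate by inspection. You instead work directly with an arbitrary $k \equiv y_0 x_0^{-1} \bmod |M|$, extracting the needed divisibilities from the single mod-$M$ relation $x_0 a' \equiv 2kb'$ and the algebraic identity that trades $4AD^2 x_0$ for $4(y_0^2 - x_0^3 - BD^3)$. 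Your approach avoids the Hensel step and the reduction-to-one-$k$ argument, at the price of a slightly more delicate identity for the $Y^4$ coefficient; the paper's approach is cleaner once the special $k$ is in hand but needs the extra existence argument.

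One small slip in your bookkeeping: the discriminant $\Delta$ is degree~$6$ in the coefficients of the form, so dividing by $M^3$ rescales $\Delta$ by $M^{-18}$, not $M^{-12}$; the net effect is then $M^{12}\cdot M^{-18} = M^{-6}$, matching the stated $\Delta(F)=\Delta(f)/M^6$. (Equivalently, once $I$ and $J$ are known, $\Delta = I^3 - 27J^2$ gives this automatically.)
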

\begin{proof}
It suffices to show that there exists an integer $k\equiv y_0x_0^{-1} \bmod |M|$ such that the properties of $F_P$ hold, because any other choice of $k$ would simply transform $F_P$ by $(\begin{smallmatrix}1& *\\0&1\end{smallmatrix})\in\SL_2(\Z)$, which preserves the integrality of $F_P$ and its invariants.

Since $M\mid D$, we have $y_0^2\equiv x_0^3+AD^2x_0\bmod |M|^3$ from the equation for $E_D$. If $k\equiv y_0x_0^{-1}\bmod |M|$, then $x_0\equiv (y_0x_0^{-1})^{2}\equiv k^2\bmod |M|$. By Hensel lifting, there exists some $k$ such that $k\equiv y_0x_0^{-1}\bmod |M|$ and 
\begin{equation}\label{eq:forx0}
x_0\equiv k^2\bmod |M|^3,
\end{equation} 
provided $M$ is odd.
Now since $y_0\equiv kx_0\equiv k^3\bmod |M|$ and $y_0^2\equiv x_0^3\equiv k^6\bmod |M|^2$, we deduce that $y_0\equiv k^3\bmod |M|^2$.
Then, on solving 
$$
(k^3+\lambda M^2)^2\equiv y_0^2\equiv x_0^3+AD^2x_0\equiv k^6+AD^2k^2 \bmod |M|^3,
$$ 
for $\lambda\in \ZZ/M\ZZ$, we see that 
\begin{equation}\label{eq:fory0}
y_0\equiv k^3+\frac{AD^2}{2k}\bmod |M|^3.
\end{equation}
Using~\eqref{eq:forx0} and~\eqref{eq:fory0}, we can check that the new quartic form
\begin{equation}
\begin{split}
 F_P(X,Y)=~&\frac{1}{M^3}f_P(MX+kY,Y)\\
 =~&MX^4+4kX^3Y+\frac{6(k^2-x_0)}{M}X^2Y^2
\\&\quad
+\frac{4(k^3-3x_0k+2y_0)}{M^2}XY^3+\frac{k^4-6x_0k^2+8y_0k-4AD^2-3x_0^2}{M^3}Y^4,\label{eq:FP}
\end{split}\end{equation}
is integral and has all the claimed properties.
\end{proof}

\begin{lemma}\label{lemma:injectivemap}
Fix $A, B\in\Z$ such that $4A^3+27B^2\neq 0$. Fix a choice of integer $M\mid \frac{D}{\gcd(2x(P),D)}$, for every $P\in \bigcup_{D\in\D}E_D(\Z)$.
Then the map from
\[P\in \bigcup_{D\in\D}E_D(\Z)\]
to
\[ (F_P,(1,0))/\SL_2(\Z),\]
with $F_P$ as defined in~\eqref{eq:defFP} and the $\SL_2(\Z)$-action as defined in~\eqref{eq:action}, is well-defined and injective.
\end{lemma}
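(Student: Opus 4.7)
The plan is to exploit, in both halves of the statement, the observation that under the action~\eqref{eq:action} the pair $(F,(1,0))$ is sent to $(\gamma\cdot F,\gamma^{-1}\cdot(1,0))$, so the subgroup of $\SL_2(\Z)$ fixing the marked vector is exactly the unipotent subgroup of matrices $\gamma_\lambda=\bigl(\begin{smallmatrix}1&\lambda\\0&1\end{smallmatrix}\bigr)$ with $\lambda\in\Z$. For well-definedness, I would start from the fact that $k$ in Lemma~\ref{lemma:defFP} is determined only modulo $M$. If $k'=k+\lambda M$, then a direct computation from~\eqref{eq:defFP} gives
\[
F_P^{(k')}(X,Y)=\frac{1}{M^3}f_P\bigl(M(X+\lambda Y)+kY,Y\bigr)=F_P^{(k)}(X+\lambda Y,Y)=(\gamma_\lambda\cdot F_P^{(k)})(X,Y),
\]
and since $\gamma_\lambda^{-1}\cdot(1,0)=(1,0)$, the two pairs $(F_P^{(k)},(1,0))$ and $(F_P^{(k')},(1,0))$ lie in the same $\SL_2(\Z)$-orbit, so the class $(F_P,(1,0))/\SL_2(\Z)$ is well defined.

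For injectivity, suppose that $(P,D)$ and $(P',D')$ with $P=(x_0,y_0)\in E_D(\Z)$ and $P'=(x_0',y_0')\in E_{D'}(\Z)$ produce equivalent pairs via some $\gamma\in\SL_2(\Z)$. The relation $\gamma^{-1}\cdot(1,0)=(1,0)$ then forces $\gamma=\gamma_b$ for some $b\in\Z$, so $F_{P'}(X,Y)=F_P(X+bY,Y)$. I would match the two quartics term by term using the explicit expansion~\eqref{eq:FP}: the coefficients of $X^4$, $X^3Y$, $X^2Y^2$, and $XY^3$ yield, in succession, $M'=M$, $k'=k+Mb$, $x_0'=x_0$, and finally $y_0'=y_0$. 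Thus $P'=P$ as points in $\Z^2$. Invariance of $I$ and $J$ together with the identities of Lemma~\ref{lemma:defFP} then give $-4A(D/M)^2=-4A(D'/M)^2$ and $-4B(D/M)^3=-4B(D'/M)^3$, which combined with $4A^3+27B^2\neq0$ force $D=D'$.

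The main obstacle is the stabilizer identification at the outset; once the upper-triangular unipotent shape of the allowed $\gamma$ is pinned down, the rest is a routine coefficient-matching exercise inside~\eqref{eq:FP}. A minor care-point arises if $A=0$ or $B=0$, in which case only one of $I,J$ is non-trivial; but then, with $P=P'$ already in hand, the defining equation $y_0^2=x_0^3+AD^2x_0+BD^3$ itself determines $D$ uniquely (using $4A^3+27B^2\neq0$), so the argument still goes through.
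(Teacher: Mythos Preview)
Your proof is correct and follows essentially the same strategy as the paper: both hinge on the observation that fixing the marked vector $(1,0)$ forces $\gamma=\bigl(\begin{smallmatrix}1&b\\0&1\end{smallmatrix}\bigr)$, after which one compares coefficients. The only organisational difference is that the paper first recovers $D$ from $M=F_P(1,0)$ and $J(F_P)=-4B(D/M)^3$ and then passes back to $f_P$ (whose vanishing $X^3Y$-coefficient forces $f_P=f_Q$, hence $P=Q$), whereas you match the coefficients of $F_P$ directly to obtain $P=P'$ before appealing to the invariants for $D=D'$.
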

\begin{proof}
We first prove that the map is well-defined.
Given $P\in E_{D}(\Z)$ with the prescribed choice of $M$, suppose $k$ and $k'$ are two choices of integer such that $k\equiv k'\equiv y_0x_0^{-1}\bmod |M|$, so $k'=k+bM$ for some $b\in\Z$.
Then, with such choices, the corresponding $F_P$ is 
$$
F'(X,Y)=\frac{1}{M^3}f_P(MX+k'Y,Y) 
\quad \text{ and } \quad F(X,Y)=\frac{1}{M^3}f_P(MX+kY,Y).
$$ 
But
\[(F',(1,0))=\begin{pmatrix}1&b\\0&1\end{pmatrix}\cdot (F,(1,0)),\]
so $(F',(1,0))$ and $(F,(1,0))$ are in the same $\SL_2(\Z)$-equivalence class. Therefore we have checked that the image $(F_P,(1,0))$ does not depend on the choice of $k$, so the map is well-defined.

We now prove that the map is injective. 
Suppose $P,Q\in \bigcup_{D\in\D}E_{D}(\Z)$ are such that
\begin{equation}\label{eq:sameclass}
(F_P,(1,0))=\gamma\cdot (F_Q,(1,0)).\end{equation}
The condition that $(1,0)=\gamma^{-1}\cdot (1,0)$ implies that the first column of $\gamma$ must be $(1,0)$, and so 
\[\gamma=\begin{pmatrix}
1&b\\
0&1
\end{pmatrix}\]
for some $b\in\Z$, since $\gamma\in\SL_2(\Z)$ must have determinant $1$.
Putting this back into~\eqref{eq:sameclass}, we have
\begin{equation}\label{eq:classFP}
F_P(X,Y)=(\gamma\cdot F_Q)(X,Y)=F_Q(X+bY,Y).\end{equation}
Observe from the properties given in Lemma~\ref{lemma:defFP} that
$F_P(1,0)=F_Q(1,0)$ determines $M$,
and $J(F_P)=J(F_Q)$ determines $D/M$,
so it must be that $P,Q\in E_{D}(\Z)$ for the same $D\in\D$.
Write
\begin{equation}\label{eq:defFPQ}F_P(X,Y)=\frac{1}{M^3}f_P(MX+k_PY,Y)\quad\text{ and }\quad F_Q(X,Y)=\frac{1}{M^3}f_Q(MX+k_QY,Y),\end{equation}
where $k_P,k_Q\in\Z$ and $M=F_P(1,0)=F_Q(1,0)$.

 Combining~\eqref{eq:defFPQ} and~\eqref{eq:classFP}, we have
\[f_P(MX+k_PY,Y)=f_Q(M(X+bY)+k_QY,Y).\]
On replacing $X$ by $(X-k_PY)/M$, we deduce that
\begin{equation}\label{eq:fPQcompare}
f_P(X,Y)=f_Q(X+(bM+k_Q-k_P)Y,Y).\end{equation}
Recall that the $X^3Y$-coefficients of $f_P$ and $f_Q$ are $0$ by construction. Comparing the $X^3Y$-coefficients on the two sides of~\eqref{eq:fPQcompare}, we have
\[bM+k_Q-k_P=0.\]
Therefore 
$f_P(X,Y)=f_Q(X,Y)$, and
hence $P=Q$.
\end{proof}
\subsection{Bounding the number of equivalence classes of quartic forms}
We apply Lemma~\ref{lemma:injectivemap} with 
\[M=\frac{D}{\gcd(2x(P),D)},\]
for every $P\in\bigcup_{D\in\D} E_D(\Z)$. To bound the size of the image of $\bigcup_{D\in\D(N)} E_D(\Z)$ with $\gcd(x(P),D)\leq N(\log N)^{-\kappa}$, we bound the number of $\SL_2(\Z)$-equivalence classes of binary quartic forms $F$ that can arise, and we will bound separately the number of solutions $(\alpha,\beta)\in\Z^2$ to the Thue inequality $|F(\alpha,\beta)|\leq N/G$, that comes from placing a dyadic interval of length $G$ around 
$\gcd(x(P),D)$.

To bound the number of $\SL_2(\Z)$-equivalence classes of binary quartic forms, we appeal to reduction theory, which allows us to choose a representative in each $\SL_2(\Z)$-equivalence class with bounded seminvariants. 
In doing so, we will make use of the following bounds taken from work of Cremona \cite{Cremona}.
\begin{lemma}[{\cite[Propositions~11 and~14]{Cremona}}]\label{lemma:redquart}
Every integral binary quartic form $F$ with non-zero discriminat is $\SL_2(\Z)$-equivalent to a form $F_{\mathrm{red}}$ with seminvariants in the range 
\[a(F_{\mathrm{red}})\ll |\phi|+|I(F_{\mathrm{red}})|^{\frac{1}{2}}\quad\text{ and }\quad H(F_{\mathrm{red}})\ll |\phi|^2+|I(F_{\mathrm{red}})|,\]
where
$\phi$ denotes the real root of $X^3-\frac{I(F)}{4}X-\frac{J(F)}{4}$ with the largest absolute value.
\end{lemma}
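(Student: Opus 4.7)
\medskip\noindent\textbf{Proof proposal.}
The seminvariant $H = a_1^2 - a_0 a_2$ is the leading coefficient of the quartic covariant $G_f$ defined in~\eqref{eq:Gf}, while $a$ is simply the leading coefficient of $f$ itself; both transform covariantly under $\SL_2(\Z)$. Hence the task reduces to finding a single $\gamma \in \SL_2(\Z)$ such that the leading coefficients of $\gamma \cdot f$ and of $\gamma \cdot G_f$ are simultaneously small. The canonical device for such a simultaneous reduction is a \emph{positive definite} $\SL_2(\R)$-covariant.

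My plan is to attach to $f$ a positive definite real binary quadratic form $q_f$ (a Julia covariant), constructed as a suitably weighted sum of $|L_i(X,Y)|^2$, where $L_1,\dots,L_4\in\C[X,Y]$ are the linear factors of $f$ over $\C$. The weights will be chosen so that $q_f$ is a covariant for the $\SL_2(\R)$-action on $f$ and has discriminant of order $\phi^2+|I|$, where $\phi$ is the real root of $X^3-\tfrac{I}{4}X-\tfrac{J}{4}$ of largest absolute value. Verifying positivity and the correct discriminant scaling must be done according to the signature of $f$ (four real roots, two real and two complex, or two pairs of complex conjugates), so a case analysis on the shape of the real root structure of $f$ is built into the construction.

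With $q_f$ in hand, I would then invoke classical Gauss reduction to choose $\gamma\in\SL_2(\Z)$ so that $\gamma\cdot q_f$ is Gauss-reduced; writing $F_{\mathrm{red}}=\gamma\cdot f$, standard reduction theory for positive definite binary quadratic forms yields $q_{F_{\mathrm{red}}}(1,0)\ll(\disc q_f)^{1/2}\ll (\phi^2+|I|)^{1/2}$. Because $q_{F_{\mathrm{red}}}(1,0)$ dominates each $|L_i(1,0)|$ appearing in the factorisation of $F_{\mathrm{red}}$, the identity $|a(F_{\mathrm{red}})|=|F_{\mathrm{red}}(1,0)|=\prod_i|L_i(1,0)|$ translates, after careful bookkeeping of the weights and an application of AM--GM, into the desired bound $a(F_{\mathrm{red}})\ll|\phi|+|I|^{1/2}$. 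The corresponding bound for $H(F_{\mathrm{red}})$ then follows by running the same argument on the quartic covariant $G_{F_{\mathrm{red}}}$: since $G_f$ and $f$ share the same Julia covariant up to normalisation, the same $\gamma$ simultaneously reduces both, producing $H(F_{\mathrm{red}})\ll\phi^2+|I|$.

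The principal obstacle I foresee is pinning down the correct normalisation of $q_f$ so that its discriminant scales correctly with $\phi$ and $I$ in each of the three signature cases, and controlling the degenerate situation where the roots of $f$ approach each other, in which the naive weighted sum of squares may fail to be uniformly bounded below. I expect Cremona's Propositions~11 and~14 to resolve these issues by expressing the weights explicitly in terms of the roots of the resolvent cubic $X^3-\tfrac{I}{4}X-\tfrac{J}{4}$, and my plan would be to follow this prescription to close the argument.
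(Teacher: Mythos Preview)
Your proposal is essentially correct and follows the same route as the cited reference: the paper itself gives no proof of this lemma, simply invoking \cite[Propositions~11 and~14]{Cremona}, and Cremona's argument is precisely the Julia-covariant construction you describe, with the weights expressed via the roots of the resolvent cubic and a case split on the signature of~$f$. So there is nothing to compare against beyond the citation, and your sketch accurately anticipates what that citation contains.
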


We now reduce the problem about counting $\SL_2(\Z)$-equivalence classes of quartic forms to a problem about counting points on a cubic surface.
\begin{lemma}\label{lemma:countforms}
The number of $\SL_2(\Z)$-equivalence classes of $F_P$ in the image of 
$$
\{P\in E_D(\Z):D\in\D,\ \gcd(x(P),D)< G\}
$$ 
is bounded by
the number of $(h,a,r,g)\in \Z^4$, where $g$ is a positive square-free integer, and 
\begin{equation}\label{eq:rangetwists}
h,a,r, g\ll_{A,B}G,
\end{equation}
such that 
\begin{equation}\label{eq:surfacetwists}
h^3+Aa^2h+B a^3=r^2g.
\end{equation}
Furthermore we can assume that $r\neq 0$ with $O_{A,B}(G(\log G)^6)$-many exceptions.
\end{lemma}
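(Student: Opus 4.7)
The plan is to attach to each $\SL_2(\ZZ)$-equivalence class $[F_P]$ a quadruple $(h,a,r,g)\in\ZZ^4$ lying on the cubic surface~\eqref{eq:surfacetwists} with bounds $h,a,r,g\ll_{A,B}G$, in a way that is $O_{A,B}(1)$-to-one; combined with the injectivity of $P\mapsto[F_P]$ from Lemma~\ref{lemma:injectivemap}, this will yield the asserted bound. Given $P=(x_0,y_0)\in E_D(\ZZ)$ with $\gcd(x_0,D)<G$, set $g:=\gcd(2x_0,D)\ll G$ (square-free as a divisor of the square-free $D$) and $M:=D/g$. Form $F_P$ via Lemma~\ref{lemma:defFP} and invoke Lemma~\ref{lemma:redquart} to pick a representative $F\in[F_P]$ with $|a(F)|\ll|\phi|+|I|^{1/2}$ and $|H(F)|\ll|\phi|^2+|I|$. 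The resolvent cubic $X^3-IX/4-J/4=X^3+Ag^2X+Bg^3$ rescales via $X=g\xi$ to $\xi^3+A\xi+B=0$, whose roots are $O_{A,B}(1)$, so $|\phi|\ll g$ and $|I|^{1/2}\ll g$, giving $|a(F)|\ll g$ and $|H(F)|\ll g^2$; the syzygy $R(F)^2=4H(F)^3-Ia(F)^2H(F)-Ja(F)^3$ (cf.~\cite[Section~4.1.1]{Cremona}) then yields $|R(F)|\ll g^3$.

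The pivotal step is to establish $g\mid H(F)$. Since $\gcd(M,g)=1$, the Chinese remainder theorem allows us to exploit the residual freedom in $k$ from~\eqref{eq:defFP} to arrange $k\equiv 0\pmod{g_{\mathrm{odd}}}$, where $g_{\mathrm{odd}}$ denotes the odd part of $g$. For each odd prime $\ell\mid g$ we have $\ell\mid x_0$ and $\ell\mid D$, whence the equation of $E_D$ forces $\ell^2\mid y_0$. A direct inspection of~\eqref{eq:FP} then shows that $\ell\mid a_i(F_P)$ for every $i\geq 1$. Since every monomial appearing in any coefficient of the quartic covariant~\eqref{eq:Gf} involves at least one $a_i$ with $i\geq 1$, every coefficient of $G_{F_P}$ is divisible by $\ell$. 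Using the covariance identity $H(\gamma F_P)=G_{F_P}(\gamma\cdot(1,0))$ for $\gamma\in\SL_2(\ZZ)$, we conclude $\ell\mid H(F)$; running over all odd prime divisors of $g$, and absorbing the prime $2$ into the implicit constants, yields $g\mid H(F)$.

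Setting $h:=H(F)/g$ and $a:=a(F)$ and substituting $I=-4Ag^2$ and $J=-4Bg^3$, the syzygy becomes $R(F)^2=4g^3\,C(h,a)$. Square-freeness of $g$ together with $g^3\mid R(F)^2$ forces $g^2\mid R(F)$; writing $R(F)=2g^2r$ and dividing by $4g^4$ produces the target equation $h^3+Aa^2h+Ba^3=r^2g$, with $|h|,|a|,|r|,g\ll_{A,B}G$. The map $[F_P]\mapsto(h,a,r,g)$ is $O_{A,B}(1)$-to-one, because the seminvariants $(a_0,H,R)=(a,gh,2g^2r)$ together with the invariants $(I,J)$ recovered from the tuple determine $a_1$ as a root of a polynomial equation obtained from $J$ (after which $a_2,a_3,a_4$ are explicit functions of $a_1$). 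Finally, exceptional tuples with $r=0$ correspond to reduced forms with $R(F)=0$, equivalently $C(H(F),g\,a(F))=0$; a separate count using the factorisation of $C$ and divisor-function estimates bounds the number of such equivalence classes by $O_{A,B}(G(\log G)^6)$. The main obstacle is the divisibility $g\mid H(F)$, which hinges on the clever choice of $k$ and a careful mod-$\ell$ analysis of both $F_P$ and its quartic covariant.
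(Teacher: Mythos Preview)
Your overall strategy matches the paper's: reduce $F_P$ via Lemma~\ref{lemma:redquart}, show $g\mid H$ through the quartic covariant, and rescale the syzygy to land on~\eqref{eq:surfacetwists}. Your route to $g\mid H$---choosing $k\equiv 0\pmod{g_{\mathrm{odd}}}$ via CRT so that $\ell\mid a_i(F_P)$ for all $i\geq 1$---is a pleasant variant of the paper's direct computation of $G_{F_P}$; both arguments exploit the covariance of $G$, and both are slightly loose at the prime~$2$.

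Two steps need repair. First, your justification that $[F_P]\mapsto(h,a,r,g)$ has $O_{A,B}(1)$ fibres does not work as written: the syzygy already determines $J$ from $(a,H,R,I)$, so substituting $a_2,a_3,a_4$ as functions of $a_1$ makes the resulting expression for $J$ identically constant in $a_1$ and imposes no constraint. The correct point (which the paper also uses without comment) is that changing $a_1$ by an integer multiple of $a_0$ is an upper-triangular $\SL_2(\ZZ)$-move, so the seminvariant tuple pins down the class. Second, and more substantively, the $r=0$ bound is not a routine divisor estimate. From $C(H,ga)=0$ you only learn that $H/(ga)$ equals one of at most three fixed rationals; with $g,a\ll G$ this still leaves $\asymp G^2$ pairs $(g,a)$, far too many. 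The paper supplies the missing idea: when $R=0$ one has $ga_0\mid H=a_1^2-a_0a_2$, hence $a_0\mid a_1^2$, and then every monomial in the formula for $\Delta(F_{\mathrm{red}})$ is divisible by $a_0$, forcing $a_0\mid\Delta(F_{\mathrm{red}})=-16(4A^3+27B^2)g^6$. Thus for each $g$ there are only $\ll_{A,B} 7^{\omega(g)}$ choices of $a$, and $\sum_{g\ll G}7^{\omega(g)}\ll G(\log G)^6$ gives the claimed exceptional count.
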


\begin{proof}
For $P=(x_0,y_0)\in E_D(\Z)$, write $g= \gcd(2x_0,D)$, so that $D=Mg$. Reduce every $F=F_P$ to an integral binary quartic form $F_{\mathrm{red}}$ with seminvariants in the bounded range given in Lemma~\ref{lemma:redquart}.
It follows from Lemma~\ref{lemma:defFP} that
$I(F)=-4Ag^2$ and $J(F)=-4Bg^3$.
Take $\phi_1$ to be the real root of $X^3+AX+B$ with the largest absolute value.
Then $\phi=\phi_1 g$ is the real root of 
$$
X^3-\frac{1}{4}I(F)X-\frac{1}{4}J(F)=X^3+Ag^2X+Bg^3
$$ 
with largest absolute value. Since $\phi\ll_{A,B}g$, Lemma~\ref{lemma:redquart} implies that
\begin{equation}\label{eq:reducedquart}
 a(F_{\mathrm{red}})\ll_{A,B}g\quad \text{ and }\quad H(F_{\mathrm{red}}) \ll_{A,B}g^2.
\end{equation}

To count the number of $F/\SL_2(\Z)$, it suffices to count the number of possible tuples $(I,J,a,H,R)$ taken by $F_{\mathrm{red}}$, such that the tuple satisfies the syzygy
\begin{equation}\label{eq:syzygy}
H^3-\frac{I}{4}a^2H-\frac{J}{4}a^3=\left(\frac{R}{2}\right)^2,
\end{equation}
where $(I,J,a,H,R)=(I(F_{\mathrm{red}}),J(F_{\mathrm{red}}),a(F_{\mathrm{red}}),H(F_{\mathrm{red}}),R(F_{\mathrm{red}}))$.
Plugging in $I(F_{\mathrm{red}})=I(F)=-4Ag^2$ and $J(F_{\mathrm{red}})=J(F)=-4Bg^3$, \eqref{eq:syzygy} becomes
\begin{equation}\label{eq:HR}
H^3+Ag^2a^2H+Bg^3a^3=\left(\frac{R}{2}\right)^2.
\end{equation}

We can check, by putting~\eqref{eq:FP} into~\eqref{eq:Gf}, that 
\begin{align*}
G_F(X)=~&x_0X^4 +\frac{4}{M}(- y_0 + x_0 k)X^3
+ \frac{2}{M^2}(2A D^2 + 3 x_0^2 + 3 x_0 k^2 - 6 y_0 k)X^2 \\
&+\frac{4}{M^3} ( A D^2 k + 3 x_0^2 k - x_0y_0 + x_0k^3 - 3 y_0 k^2)X\\
&+\frac{1}{M^4}(-4 A x_0 D^2 + 4 A D^2 k^2 - 3 x_0^3 + 6 x_0^2 k^2 - 4x_0y_0 k + x_0 k^4 + 4 y_0^2 - 4 y_0 k^3).
\end{align*}
Every coefficient of $G_F(X)$ is divisible by $g$, so $g\mid G_F(X)$. Then we also know that $g\mid G_{F_{\mathrm{red}}}(X)$ because $G_F$ is a covariant of $F$.
Hence $H$, being the leading coefficient of $ G_{F_{\mathrm{red}}}(X)$, must also be divisible by $g$.
The left hand side of~\eqref{eq:HR} is integral, so $R$ must be even.
From~\eqref{eq:HR}, we deduce that $g^3\mid \left(\frac{R}{2}\right)^2$, so $g^2\mid \frac{R}{2}$ since $g$ is square-free. Therefore, on writing $H=gh$ and $\frac{R}{2}=g^2r$, we see that~\eqref{eq:HR} becomes~\eqref{eq:surfacetwists}.
 The bounds~\eqref{eq:reducedquart} on the variables becomes
\[a\ll_{A,B}g \quad\text{ and }\quad h\ll_{A,B}g.\]
When $g<2G$, this implies that $(h,a,r,g)$ satisfies~\eqref{eq:rangetwists}.
Since $(h,a,r,g)$ determines $(I,J,a,H,R)$, it suffices to bound the number of such $(h,a,r,g)$.

If $r=R=0$, then $(H,\frac{1}{2}R)$ is a torsion point in $E_{ga}(\Z)$, so $ga\mid H$. Writing $F_{\red}(X,Y)=a_0X^3+a_1X^2Y+a_2XY^3+a_3Y^3$, we have $ga_0\mid H=a_1^2-a_0a_2$, hence $a_0\mid a_1^2$.
Since $a_0$ or $a_1^2$ divides every term in the formula of $\Delta(F_{\red})$, we see that $a_0\mid \Delta(F_{\red})=-16(4A^3+27B^2)g^6$.
Given each $g$, the number of possible $a=a_0$ is $\ll_{A,B} 7^{\omega(g)}$. There are at most 3 torsion point in $E_{ga}(\Z)$, so the contribution of such forms is $\ll_{A,B} \sum_{g\ll G}7^{\omega(g)}\ll G(\log G)^6$.
\end{proof}

We record the following upper bound, due to Thunder \cite{Thunderdecomp}, which allows us to count the number of solutions to Thue inequalities.
\begin{lemma}\label{lemma:thueineq}
Suppose that $F(X,Y)$ is an integral binary form with non-zero discriminant. Then for any positive integer $m$, we have
\[\#\{(X,Y)\in\Z^2:|F(X,Y)|\leq m\}\ll m^{\frac{2}{\deg(F)}},\]
where the implied constant only depends on the degree $\deg(F)$ of $F$.
\end{lemma}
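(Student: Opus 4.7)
The plan is to follow a Mahler--Thue style argument, refined so that the implied constant depends only on $n = \deg F$. The cases $n \leq 2$ are elementary: for a linear form the bound $O(m^2) = O(m^{2/n})$ is immediate, while a non-degenerate binary quadratic form takes a given absolute value at most $O(m)$ times on $\ZZ^2$, by a standard geometry-of-numbers count in an ellipse or a hyperbolic annulus.

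Assume henceforth $n \geq 3$. First apply the reduction theory of integral binary $n$-ic forms to replace $F$ by an $\SL_2(\ZZ)$-equivalent form whose coefficients are controlled purely in terms of $n$ and $|\Delta(F)|$. Over $\overline{\QQ}$, factor
\[
F(X,Y) = a_0 \prod_{i=1}^{n}(X - \theta_i Y),
\]
and partition $\RR^2$ into $n$ sectors $\mathcal{S}_i$, where $\mathcal{S}_i$ is the set of $(X,Y)$ for which $|X - \theta_i Y|$ is the smallest of the $n$ linear factors. Inside $\mathcal{S}_i$, an argument in the spirit of Lemma \ref{lem:linear} shows that every other factor is $\gg \max\{|X|, |Y|\}$, so the constraint $|F(X,Y)| \leq m$ forces
\[
\max\{|X|, |Y|\} \ll m^{1/n}
\quad \text{and} \quad
|X - \theta_i Y| \ll \frac{m}{\max\{|X|, |Y|\}^{n-1}}.
\]
A lattice point count in each such thin region then yields $O(m^{2/n})$ integer solutions per sector, and summing over the $n$ sectors gives the claimed bound.

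The main obstacle is to ensure that the implied constant depends on $n$ alone, rather than on the specific form $F$. A naive Mahler-style argument typically acquires a factor involving $|\Delta(F)|$ or the Mahler measure of $F$, coming from the spacing of the roots $\theta_i$ and from the relative widths of the sectors $\mathcal{S}_i$. Thunder's refinement is to treat the ``clustered root'' sectors by a successive-minima argument: one chooses a basis of $\ZZ^2$ adapted to each sector so that the determinant-to-area ratio of the thin region $|X-\theta_i Y|\ll m \cdot \max\{|X|,|Y|\}^{-(n-1)}$ is controlled purely in terms of $n$, absorbing any apparent $\Delta(F)$-dependence. This is precisely the step where one must invoke \cite{Thunderdecomp} rather than classical Mahler or Evertse estimates, which would leave an unwanted dependence on $F$ in the constant.
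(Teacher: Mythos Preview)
Your sketch contains a genuine gap. The assertion that, inside a sector $\mathcal{S}_i$, the inequality $|F(X,Y)| \leq m$ forces $\max\{|X|,|Y|\} \ll m^{1/n}$ is false. Take $F(X,Y)=X^3-2Y^3$: the convergents $(p,q)$ to $2^{1/3}$ satisfy $|p-2^{1/3}q| < 1/q$, whence $|F(p,q)| \asymp q$; thus $|F|\leq m$ admits solutions with $\max\{|X|,|Y|\}$ of order $m$, not $m^{1/3}$. What survives from your sector estimate is only the second inequality $|X-\theta_i Y|\ll m/\max\{|X|,|Y|\}^{n-1}$, and a naive lattice-point count in that thin strip picks up a divergent ``$+1$ per dyadic range of $|Y|$'' contribution with no a priori ceiling on $|Y|$. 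A correct Mahler--Thue argument must separate the small solutions (those with $\max\{|X|,|Y|\}\ll_F m^{1/n}$) from $O_F(1)$ large exceptional solutions via a gap principle; both implied constants depend on $F$, which is precisely the uniformity problem you then hand off to Thunder anyway.

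The paper bypasses all of this. Its proof is two lines: Thunder's Theorem~2 in \cite{Thunderdecomp} already yields the bound $\ll m^{2/n}$ with implied constant depending only on $n$, under the sole hypothesis that the volume $A_F=\vol\{(X,Y)\in\R^2:|F(X,Y)|\leq 1\}$ is finite; and Bean \cite[Corollary~1]{Bean} supplies $A_F \ll_n |\Delta(F)|^{-1/(n(n-1))}\leq 1$, since $\Delta(F)$ is a non-zero integer. Since your final paragraph appeals to \cite{Thunderdecomp} in any case, the sector set-up is both erroneous as written and redundant. The clean fix is to invoke Thunder's theorem as a black box, identify that the hypothesis to verify is $A_F<\infty$, and cite Bean for the uniform bound on $A_F$.
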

\begin{proof}
To apply \cite[Theorem~2]{Thunderdecomp}, it suffices to check that the area 
\[A_F=\vol\{(X,Y)\in\R^2:|F(X,Y)|\leq 1\}\] is finite.
The finiteness of $A_F$ follows from work of Bean \cite[Corollary~1]{Bean}.
\end{proof}

We are now ready to bound the contribution from integral points $(x,y)\in E_D(\Z)$ with small $\gcd(x,D)$.
\begin{lemma}\label{lemma:smallgcd}
For any $K\geq 1$, we have 
$$
\sum_{D\in\D(N)} \#\{P\in E_D(\Z):\gcd(x(P),D)<K\}\ll (NK)^{\frac{1}{2}}(\log K)^{\nu} ,
$$
where $\nu$ is $3$ when $E_D[2]$ is trivial over $\Q$ and $6$ otherwise.
\end{lemma}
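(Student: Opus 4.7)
The plan is to decompose dyadically on $g \coloneqq \gcd(2x(P), D)$, apply the Mordell-type quartic form construction of Lemma~\ref{lemma:injectivemap} on each dyadic range, bound the number of $\SL_2(\Z)$-equivalence classes of the resulting forms via Lemma~\ref{lemma:countforms} (which in turn invokes Theorem~\ref{theorem:main}), and bound the number of representatives per class using the Thue inequality of Lemma~\ref{lemma:thueineq}.

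Concretely, I would partition the points $P\in\bigcup_{D\in\D(N)}E_D(\Z)$ with $\gcd(x(P),D)<K$ according to the dyadic range $G\leq g<2G$, where $G$ runs over powers of $2$ up to $O(K)$ (noting that $g$ and $\gcd(x(P),D)$ differ by at most a factor of $2$). For each such range I take $M=D/g$ in Lemma~\ref{lemma:defFP} and apply the injective map of Lemma~\ref{lemma:injectivemap}. Since the map is injective, the number of $P$ in the dyadic range is at most the number of $\SL_2(\Z)$-equivalence classes in the image. Fixing a reduced representative $F_{\mathrm{red}}$ in each $\SL_2(\Z)$-orbit of forms, this is bounded by
\[
\bigl(\text{number of }\SL_2(\Z)\text{-orbits of }F_P\text{ with }g\in[G,2G)\bigr)\times\max_{F_{\mathrm{red}}}\#\{(\alpha,\beta)\in\Z^2:|F_{\mathrm{red}}(\alpha,\beta)|\leq N/G\},
\]
since any representative in the orbit of $(F_P,(1,0))$ whose form equals $F_{\mathrm{red}}$ must satisfy $F_{\mathrm{red}}(\alpha,\beta)=F_P(1,0)=M=D/g$, with $|M|\leq N/G$.

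For the first factor, Lemma~\ref{lemma:countforms} reduces the problem to counting integral points on the cubic surface~\eqref{eq:surfacetwists}, which by Theorem~\ref{theorem:main} contributes $\ll G(\log G)^{\max\{\lambda,2\}}$ from the generic locus $r\neq 0$, plus $O(G(\log G)^6)$ from the $r=0$ torsion exceptions. When $x^3+Ax+B$ is irreducible -- equivalently $E_D[2]$ is trivial over $\Q$ -- the exceptional $r=0$ contribution vanishes, because it would correspond to a rational $2$-torsion point on $E_{ga}$, which does not exist for the irreducible twist; in this case $\lambda=1$ and the bound is $\ll G(\log G)^2$. Otherwise the torsion term dominates and gives $\ll G(\log G)^6$. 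For the second factor, Lemma~\ref{lemma:thueineq} applied to the quartic $F_{\mathrm{red}}$ yields $\ll(N/G)^{1/2}$.

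Multiplying, the contribution from each dyadic slot is $\ll(NG)^{1/2}(\log G)^{\mu}$ with $\mu\in\{2,6\}$, and summing over the $O(\log K)$ dyadic values of $G\leq K$ gives the claimed bound $(NK)^{1/2}(\log K)^{\nu}$, where the discrepancy of one log between $\mu$ and $\nu$ in the trivial-torsion case is absorbed by the crude summation over dyadic slots. The main obstacle I anticipate is the bookkeeping around the Mordell construction: ensuring the dependence of $F_P$ on the auxiliary $k$ from Lemma~\ref{lemma:defFP} is cleanly absorbed into the $\SL_2(\Z)$-action, and verifying that the torsion exceptions of Lemma~\ref{lemma:countforms} genuinely disappear when $x^3+Ax+B$ is irreducible. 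This dichotomy is precisely what distinguishes the two values of $\nu$ in the statement.
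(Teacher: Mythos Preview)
Your plan matches the paper's proof closely: dyadic decomposition on the $\gcd$, the injective Mordell map of Lemma~\ref{lemma:injectivemap}, the class count via Lemma~\ref{lemma:countforms} and Theorem~\ref{theorem:main}, and the Thue bound of Lemma~\ref{lemma:thueineq}. Your handling of the $r=0$ exceptions is also correct.

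There is one genuine gap. Theorem~\ref{theorem:main} only counts solutions to~\eqref{eq:ogeq} with $\gcd(x_1,x_2,x_4)=1$, but the tuples $(h,a,r,g)$ produced by Lemma~\ref{lemma:countforms} need not satisfy $\gcd(h,a,g)=1$. The paper deals with this by setting $b=\gcd(h,a,g)$, observing that $b\mid r$ since $g$ is square-free, applying Theorem~\ref{theorem:main} to $(h/b,a/b,r/b,g/b)$, and then summing over $b\leq G$; this costs an extra factor of $\log G$, yielding $\ll G(\log G)^{\max\{\lambda,2\}+1}$ classes. In the trivial-torsion case this is $G(\log G)^{3}$, not $G(\log G)^{2}$, and \emph{this} is where the exponent $\nu=3$ comes from. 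Your attempted recovery of the missing logarithm via the dyadic summation over $G$ does not work: that sum is geometric in $G^{1/2}$ and is dominated by its largest term, so it contributes no additional logarithm (and if it did, it would push the non-trivial-torsion exponent to $7$, not $6$). Once you insert the $b$-sum before invoking Theorem~\ref{theorem:main}, the dyadic summation yields $(NK)^{1/2}(\log K)^{\nu}$ directly with $\nu\in\{3,6\}$, exactly as stated.
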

\begin{proof}
We split the sum into dyadic intervals according to the size of $\gcd(x(P),D)$. Consider the points $P\in \bigcup_{D\in\D(N)} E_D(\Z)$ such that $G\leq \gcd(x(P),D)<2G$.

It follows from Lemma~\ref{lemma:injectivemap} that we want to bound the number of $\SL_2(\Z)$-equivalence classes of $(F_P,(1,0))$ from such $P$. Given any integral binary quartic form $F$, we can collect all $F_P$ that are $\SL_2(\Z)$-equivalent to $F$, which we write as $F_P\sim_{\SL_2(\ZZ)} F$.
 This allows us to transform $(F_P,(1,0))$ into $(F,(\alpha,\beta))$, where $(\alpha,\beta)\in \Z^2$.
Then, since 
$$
F(\alpha,\beta)=F_P(1,0)=M=\frac{D}{\gcd(2x(P),D)}, 
$$
we see that $(\alpha,\beta)$ satisfies $|F(\alpha,\beta)|\leq N/G$. By Lemma~\ref{lemma:thueineq}, the number of solutions to the inequality
$|F(X,Y)|\leq N/G$ is $\ll (N/G)^{1/2}$. Therefore, given any $F$, we have the upper bound
\begin{equation}\label{eq:pointsinclass}
\sum_{D\in\D(N)}\#\{P\in E_D(\Z):F_P\sim_{\SL_2(\Z)}F,\ \gcd(x(P),D)\geq G\}\ll \left(\frac{N}{G}\right)^{1/2}.
\end{equation}

Next we bound the number of $\SL_2(\Z)$-equivalence classes of $F$ that can arise as the image of some $P\in \bigcup_{D\in\D} E_D(\Z)$. By Lemma~\ref{lemma:countforms}, it suffices to count $(h,a,r,g)\in\Z^4$ such that $g$ is square-free, $h,a,g,r\ll G$ and $h^3+Aa^2h+B a^3=gr^2$.
First suppose that $r\neq 0$.
Since $g$ is square-free, it must be that $b\coloneqq \gcd(h,a,g)\mid r$.
Hence we can bound the number of $(h/b,a/b,r/b,g/b)$ using Theorem~\ref{theorem:main}, because $\gcd(h/b,a/b,g/b)=1$. Summing over $b\leq G$, we thereby obtain
 \begin{align*}
 \#\left\{(h,a,r,g)\in\Z^4: 
 \begin{array}{l}
 g \text{ square-free},\ h,a,g,r\ll G\\ 
 h^3+Aa^2h+B a^3=gr^2,\ r\neq 0
 \end{array}
 \right\}
 &\ll \sum_{b\leq G} \frac{G}{b}(\log G)^{\max\{\lambda,2\}}\\
 &\ll G(\log G)^{\max\{\lambda,2\}+1}.
 \end{align*}
 If $E_D$ has non-trivial two torsion, then we also have to include $O(G(\log G)^6)$-many exceptions to account for the possibility that $r=0$.
Therefore the number of $\SL_2(\Z)$-equivalence classes in
\[\bigcup_{D\in\D} \{F_P:P\in E_D,\ \gcd(x(P),D)<2G\}\]
is bounded by $\ll G(\log G)^{\nu}$.

Summing~\eqref{eq:pointsinclass} over the $O(G(\log G)^{\nu})$-many $\SL_2(\Z)$-equivalence classes of $F$, we have
\begin{equation}\label{eq:Gbound}
\begin{split}
 \sum_{D\in\D(N)} \#\{P\in E_D(\Z):G\leq \gcd(x(P),D)<2G\}
 &\ll G(\log G)^{\nu}\left(\frac{N}{G}\right)^{\frac{1}{2}}\\ &=
(NG)^{\frac{1}{2}}(\log G)^{\nu} .
\end{split}
\end{equation}
Finally, summing over $G$ being powers of $2$ and subject to $1\leq G<K$, gives the desired result.

\end{proof}

\begin{remark}
Heuristically, we expect the set $\{(X,Y)\in\Z^2:|F(X,Y)|\leq m\}$ considered in Lemma~\ref{lemma:thueineq} to have size $\sim A_Fm^{\frac{2}{\deg(F)}}$, where 
$$
A_F=\vol \{(X,Y)\in\R^2:|F(X,Y)|\leq 1\}\ll \Delta(F)^{-\frac{1}{\deg(F)(\deg(F)-1)}},
$$ 
by \cite[Corollary~1]{Bean}.
When $F$ is of degree $4$ with discriminant $\Delta(F)=-16(4A^3+27B^2)g^6$, this gives an upper bound of $\ll \Delta(F)^{-\frac{1}{12}}m^{\frac{1}{2}}\ll_{A,B} g^{-\frac{1}{2}}m^{\frac{1}{2}}$.
Such a bound would replace~\eqref{eq:pointsinclass} by $(N/G^2)^{\frac{1}{2}}$, and replace~\eqref{eq:Gbound} by $\ll N^{\frac{1}{2}}(\log G)^{\nu}$. Summing over dyadic intervals of length $G\leq N$ would then give 
$$
\sum_{D\in\D(N)} \#E_D(\Z)\ll N^{\frac{1}{2}}(\log N)^{\nu+1}.
$$ 
However, in general, we cannot expect to be able to prove strong point-wise bounds of the form $\ll A_Fm^{\frac{2}{\deg(F)}}$ on the number of solutions to Thue inequalities when $m$ is small relative to $\Delta(F)$. As we will see in Lemma~\ref{lemma:twistslb}, there are infinitely many integral points in the family $E_D$, which implies that there are integral binary quartic forms $f_P$ taking the form~\eqref{eq:fp} with arbitrary large $\Delta(f_P)$, and $f_P(1,0)=1$ .
\end{remark}

\subsection{Completing the proof of Theorem~\ref{theorem:maintwists}}\label{s:5.4}

Taking $K=N(\log N)^{-\kappa}$ in Lemma~\ref{lemma:smallgcd}, we obtain
\[\sum_{D\in\D(N)}\# \{P\in E_D(\Z):\gcd(x(P),D)<N(\log N)^{-\kappa}\}\ll N(\log N)^{3-\frac{1}{2}\kappa}.
\]
The upper bound in Theorem~\ref{theorem:maintwists} therefore follows from
combining this with Lemma~\ref{eq:largegcd}, on taking $\kappa=7$.
The lower bound is achieved in the following result. 

\begin{lemma}\label{lemma:twistslb}
Fix $A,B\in\Z$ such that $x^3+Ax+B$ is separable over $\Q$.
Then
\[\#\{D\in\D(N): E_D(\Z)\setminus E_D[2]\neq\varnothing\}\gg_{A,B} N^{\frac{1}{2}}.\]
\end{lemma}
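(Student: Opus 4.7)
The plan is to combine an elementary inversion of Mordell's quartic-form construction with the lower bound on integer points of the cubic surface established in Theorem~\ref{theorem:main}, to exhibit $\gg N^{1/2}$ distinct squarefree $D \in \D(N)$ each supporting a non-$2$-torsion integer point on $E_D$.

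The key elementary observation is the following. For any integer quadruple $(h, a, r, g) \in \Z^4$ with $g \neq 0$ and $r \neq 0$ satisfying
\[
h^3 + A a^2 h + B a^3 = r^2 g,
\]
setting $D = a g$ and $(x_0, y_0) = (g h, g^2 r)$ yields
\[
x_0^3 + A D^2 x_0 + B D^3 = g^3 (h^3 + A a^2 h + B a^3) = g^4 r^2 = y_0^2,
\]
so $(g h, g^2 r) \in E_{a g}(\Z)$, and this point is not $2$-torsion because $y_0 = g^2 r \neq 0$. If in addition $a$ and $g$ are squarefree with $\gcd(a, g) = 1$, then $D = a g \in \D$.

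To produce many such quadruples, I would apply the lower bound in Theorem~\ref{theorem:main} to the binary cubic form $C(x_1, x_2) = x_1^3 + A x_1 x_2^2 + B x_2^3$, which is separable over $\Q$ by the non-singularity hypothesis $4 A^3 + 27 B^2 \neq 0$. Taking the bounding parameter there of order $N^{1/2}$ produces $\gg N^{1/2}$ integer points $(h, a, r, g)$ on $C(h, a) = r^2 g$ with all coordinates of size $O(N^{1/2})$ and $\gcd(h, a, g) = 1$; in particular $|D| = |a g| \ll N$. A standard inclusion-exclusion sieve then preserves a positive proportion with $a$ and $g$ both squarefree and coprime and $r \neq 0$.

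The main obstacle is to convert $\gg N^{1/2}$ such quadruples into $\gg N^{1/2}$ \emph{distinct} values of $D = a g$, since the fibre of $(h, a, r, g) \mapsto a g$ over a fixed squarefree $D$ has size controlled \emph{a priori} by $2^{\omega(D)}$ (the number of coprime squarefree decompositions) times the number of integer points on the associated elliptic curves, the latter being uniformly bounded via Siegel's theorem and Theorem~\ref{theorem:HS}. To absorb the potential polylogarithmic factor coming from $2^{\omega(D)}$, I would either restrict to the slice $a = 1$ (in which case the decomposition is unique and the count of admissible $h$ is governed by the Wirsing-type mean estimate in Lemma~\ref{lemma:meanf}), or average over coprime decompositions of $D$ using standard divisor bounds. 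Either approach yields the claimed lower bound $\gg_{A, B} N^{1/2}$.
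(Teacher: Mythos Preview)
Your inversion $(h,a,r,g)\mapsto (gh,g^2r)\in E_{ag}(\Z)$ is correct, but the route you propose has two genuine gaps.

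First, Theorem~\ref{theorem:main} gives you no handle on $a$ and $g$ being squarefree and coprime. Its lower bound (Lemma~\ref{lemma:cubiclb}) is produced by choosing, for each modulus $x_3$, a single shortest lattice vector $(x_1,x_2)$ via Minkowski, and then setting $x_4=C(x_1,x_2)/x_3^2$; nothing in that construction controls square factors of $x_2$ or $x_4$. An inclusion--exclusion would require matching upper bounds for the subcounts with $p^2\mid a$ or $p^2\mid g$, but the only upper bound available is $N^\circ(B)\ll B(\log B)^{\max\{\lambda,2\}}$, which is larger than the lower bound and so cannot be subtracted.

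Second, even granting $\gg N^{1/2}$ good quadruples, your multiplicity control does not go through. The fibre over a fixed squarefree $D$ has size at most $2^{\omega(D)+O(1)}\,\#E_D(\Z)$, and neither factor is $O(1)$: Siegel's theorem is not uniform, and Theorem~\ref{theorem:HS} depends on $\rank E_D(\Q)$, which you have not bounded. Your fallback to the slice $a=1$ does not recover $N^{1/2}$ either. With $a=1$ one has $D=g$ equal to the squarefree part of $h^3+Ah+B$; for this to satisfy $|D|\le N$ one needs $|h|\ll N^{1/3}$ apart from the rare $h$ for which $h^3+Ah+B$ has an abnormally large square factor, and those exceptional $h$ correspond to integer points on finitely many auxiliary curves. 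Either way the slice yields only $O(N^{1/3})$ candidates, and Lemma~\ref{lemma:meanf} is not relevant here.

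The paper avoids both issues by appealing to Xiao's theorem on squarefree values of binary forms, applied to the quartic $F(X,Y)=Y\cdot C(X,Y)$. That result directly furnishes $\gg N^{1/2}$ \emph{distinct} squarefree $D=F(\alpha,\beta)=\beta\cdot C(\alpha,\beta)$; writing $D=\beta d$ with $d=C(\alpha,\beta)$ gives $C(\alpha d,\beta d)=d^4$ and hence the non-torsion point $(\alpha d,d^2)\in E_D(\Z)$. No fibre analysis or sieving is needed because distinctness of $D$ is built into Xiao's statement.
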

\begin{proof}
As before, take $C$ to be the binary cubic form such that $C(x,1)=x^3+Ax+B$.
Take $F(X,Y)=Y\cdot C(X,Y)$, which 
has  leading coefficient $1$ and 
must have non-zero discriminant, because $C$ is separable over $\Q$.
Appealing to work of Xiao  \cite[Theorem~1.2]{Xiaopfree}, we conclude  that 
\[\#\left\{D\in\D(N): F(\alpha,\beta)=D\text{ for some }(\alpha,\beta)\in \Z^2\right\}\gg_F N^{\frac{1}{2}}.\]
If 
$F(\alpha,\beta)=\beta\cdot C(\alpha,\beta)=D$,
then $\beta\mid D$ and  we may write $D=\beta d$, for a suitable $d\in \NN$.
Hence
$C(\alpha,\beta)=d$, and so 
$C(\alpha d,\beta d)=d^4$, which thereby  gives a point $(\alpha d,d^2)\in E_{D}(\Z)$.
\end{proof}

\section{Character sum input}
The techniques developed by Heath-Brown in \cite{HBSelmer1,HBSelmer} allow us to prove the following result to handle sums of products of quadratic characters. 
Our proof will also incorporate some refinements made in \cite{FK4rank}.
We have chosen to extract a general form of the result since we will need to apply it several times in 
different contexts.

\begin{theorem}\label{theorem:charsum}
Let $\cI$ be a non-empty finite set and fix some subsets $\cJ_1,\dots, \cJ_r $ of $\cI$. Define a function $\Phi:\cI\times\cI\rightarrow \F_2$ such that $\Phi(i,i)=0$ for all $i\in\cI$. Fix $\kappa,\epsilon,C>0$.
For each $i\in\cI$:
\begin{itemize}
\item let $c_i\mid\beta_i$ be positive integers such that  $c_i\leq C$, $\beta_i\leq (\log N)^{\kappa}$;
\item let $K_i/\Q$ be a Galois extension and let $\alpha_i\in K^{\times}$ such that $K_i(\sqrt{\alpha_i})$ is Galois over $\Q$ of degree less than $C$ and $\Disc(K_i(\sqrt{\alpha_i})/\Q)\mid \beta_i$;
\item let $\chi_i$ be a multiplicative function such that for any prime $p$,
\[\chi_i(p)=\begin{cases}
\Big(\dfrac{\alpha_i}{\p}\Big)&\text{if }p
\text{ splits completely in } K_i,\\
\hfil 0&\text{otherwise},
\end{cases}\]
 where $\p$ denotes a prime in $K_i$ lying over $p$;
\item let $f_i$  be a multiplicative function such that for any $q\in\Z/c_i\Z$, there exists $f_{i,q}\in[0,1]$ such that $f_i(p)=f_{i,q}$ for every prime $p\equiv q\bmod c_i$.
\end{itemize} 
Define
\[\lambda\coloneqq \max_{i\in\cI}\frac{1}{\#(\Z/c_i\Z)^{\times}}\sum_{q\in(\Z/c_i\Z)^{\times}}f_{i,q}.\]
Let
 $M$ be the maximum possible size of a subset $\cU\subseteq \cI$ such that $\Phi(i,j)+\Phi(j,i)=0$ for every $i,j\in\cU$.
 Let
 \[S(N)\coloneqq \sum_{(D_i)}
 \prod_{i\in\cI} f_i(D_i)\chi_i(D_i)
 \prod_{i,j\in\cI}\leg{D_i}{D_j}^{\Phi(i,j)},\]
 where the sum is over all tuples of positive integers $(D_i)_{i\in\cI}$ such that $\prod_i D_i\in\D(N)$, $\gcd(\prod_i D_i,2\prod_{i}\beta_i)=1$, and $\prod_{i\in\cI\setminus\cJ_k}D_i\neq 1$ for every $k=1,\dots,r$.
Then
\[S(N)=\sum_{\cU}\sum_{(D_i)_{i\in\cU}}
 \prod_{i\in\cU} f_i(D_i)\chi_i(D_i)
 \prod_{i,j\in\cU}\leg{D_i}{D_j}^{\Phi(i,j)}+O\left(N(\log N)^{(M-1)\lambda-1+\epsilon}\right),\]
 where the sum over $\cU$ is taken over all $\cU\subseteq \cI$ of size $M$ satisfying all of the following:
 \begin{enumerate}[label=(P\arabic*)]
 \item $\cU\not\subseteq \cJ_k$ for every $k=1,\dots,r$.\label{prop:one}
 \item $\Phi(i,j)+\Phi(j,i)=0$ for every $i,j\in\cU$.\label{prop:two}
 \item For any $i\in\cU$ such that $\Phi(i,j)=0$ for all $j\in\cU$, we have $\sqrt{\alpha_i}\in K_i(\zeta_{c_i})$, where $\zeta_{c_i}$ denotes a primitive $c_i$-th root of unity.\label{prop:single}
 \item For any distinct $i,j\in \cU$ such that
 $\Phi(i,k)=\Phi(j,k)$ for all $k\in\cU$,  we have  $\sqrt{\alpha_i\alpha_j}\in K_i(\zeta_{c_i})\cdot  K_j(\zeta_{c_j})$. \label{prop:double}
\end{enumerate}
The implied constant depends at most on $\#\cI,\kappa,\epsilon, C,\lambda$.
\end{theorem}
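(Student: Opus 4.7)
The plan is to follow the template established by Heath-Brown \cite{HBSelmer1,HBSelmer} for character sums arising in Selmer-group statistics, with the algebraic refinements for general Galois backgrounds due to Fouvry and Kl\"uners \cite{FK4rank}. The strategic point is that whenever the product $\prod_i\chi_i(D_i)\prod_{i,j}\leg{D_i}{D_j}^{\Phi(i,j)}$ is genuinely non-trivial as a character in some variable, one gets a power-of-$\log N$ saving via Siegel--Walfisz type bounds; the main term is produced exactly by those configurations in which this product is principal. Conditions (P1)--(P4) should be read as the algebraic characterisation of when the surviving character on a $\cU$-block is trivial.

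First I would apply quadratic reciprocity to each factor $\leg{D_i}{D_j}^{\Phi(i,j)}$, producing a correction depending only on the residues of $D_i,D_j$ modulo $4$, and then stratify each $D_i$ by its class modulo $4c_i\beta_i$. The correction and the $c_i$-periodic part of $\chi_i$ can then be absorbed into a modified multiplicative function, still denoted $f_i$, so that the effective exponents of $\leg{D_i}{D_j}$ become symmetric in $i,j$ within each residue class. Next, peel the variables $D_j$ off one at a time. Holding all other $D_i$ fixed, the inner sum becomes
\[\sum_{D_j\leq N/\prod_{i\ne j}D_i} f_j(D_j)\,\psi_j(D_j),\qquad \psi_j(n)=\chi_j(n)\prod_{i\ne j}\leg{n}{D_i}^{\Phi(j,i)+\Phi(i,j)},\]
viewed as a sum over squarefree $D_j$ against a multiplicative character $\psi_j$ of conductor dividing $4c_j\beta_j\prod_{i\ne j}D_i$. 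If $\psi_j$ is non-principal, Siegel--Walfisz-type estimates for character sums over squarefree integers (cf.\ \cite{HBSelmer}) save $(\log N)^{-A}$ for any fixed $A>0$. If $\psi_j$ is principal, the sum contributes $\sim \lambda_j\log(N/\prod_{i\ne j}D_i)$ via Lemma~\ref{lemma:meanf}, and the argument iterates on $\cI\setminus\{j\}$.

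This recursion yields a branching whose leaves correspond to subsets $\cU\subseteq\cI$. I would then show that $\psi_j$ is principal along a given branch precisely when either $\sqrt{\alpha_j}\in K_j(\zeta_{c_j})$, in the case that $j$ is unlinked from the remainder of $\cU$ by $\Phi$, giving (P3), or $\sqrt{\alpha_j\alpha_{j'}}\in K_j(\zeta_{c_j})\cdot K_{j'}(\zeta_{c_{j'}})$, in the case that $\Phi(j,\cdot)=\Phi(j',\cdot)$ on $\cU$ for some other $j'\in\cU$, giving (P4). Condition (P2) is then exactly the requirement that the reciprocity corrections collapse within $\cU$, and (P1) is inherited from the non-triviality constraints in the definition of $S(N)$. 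The subsets of maximal size $M$ satisfying (P1)--(P4) assemble into the stated main term. Every other leaf picks up an extra factor of $(\log N)^{-A}$ on at least one peeled-off variable, while Lemma~\ref{lemma:meanf} applied to the at most $M-1$ surviving variables yields the claimed error $O(N(\log N)^{(M-1)\lambda-1+\varepsilon})$.

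The main obstacle is precisely this algebraic classification: deciding when $\psi_j$ is a principal character. One must disentangle the Artin symbol of $K_j(\sqrt{\alpha_j})/\Q$ carried by $\chi_j$ from the cyclotomic contribution coming from the residue class of $D_j$ modulo $c_j$ and from the quadratic twists induced by the other $D_i$, and then verify that the resulting trivialisation criterion matches (P3)--(P4) on the nose; this is the role played by the hypothesis that $K_j(\sqrt{\alpha_j})/\Q$ is Galois of bounded degree. A secondary technical difficulty is uniformity of the Siegel--Walfisz estimate in the varying modulus $\prod_{i\ne j}D_i$, which can be as large as $N$; this is handled by confining possible Siegel zeros to a sparse set of exceptional moduli in the style of \cite{HBSelmer}, so that their cumulative contribution is absorbed into the error.
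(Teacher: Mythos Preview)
Your proposal has a genuine gap in the handling of the case where two \emph{linked} variables $D_i$ and $D_j$ are both large. In your recursive ``peeling'' scheme, the inner character $\psi_j$ has conductor divisible by $\prod_{i:\,\Phi(i,j)+\Phi(j,i)=1} D_i$, and when some linked $D_i$ is large (say of size $N^{1/2}$) this conductor is far outside the Siegel--Walfisz range. Your suggested fix --- confining Siegel zeros to a sparse exceptional set in the style of \cite{HBSelmer} --- does not rescue this: exceptional-zero technology improves uniformity in the modulus only up to a small power of $\log$, and cannot produce cancellation for a character of conductor $\gg N^{\delta}$. This is not a secondary technicality; it is the central obstruction, and no amount of zero-free-region bookkeeping will overcome it.

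The paper does not proceed recursively. Instead it dissects each $D_i$ into multiplicative boxes $A_i\le D_i<\Delta A_i$ and classifies the box-vectors $\mathbf{A}$ by the sizes of the $A_i$. The key missing ingredient in your sketch is the \emph{bilinear} double-oscillation bound (Lemma~\ref{lemma:largesieve}, going back to \cite[Lemma~4]{HBSelmer1} and refined in \cite{FK4rank}): when two linked indices $i,j$ both satisfy $A_i,A_j\ge N^{\dagger}=(\log N)^{O(1)}$, the sum $\sum_{D_i}\sum_{D_j}a(D_i)b(D_j)\leg{D_i}{D_j}$ is $\ll A_iA_j\,(\min(A_i,A_j))^{-1/2+\varepsilon}$, with no Siegel--Walfisz input at all. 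Only once the large-sieve has forced all linked-to-large indices down to size $<N^{\dagger}$ does Siegel--Walfisz enter, now legitimately since the relevant conductor is $\ll (\log N)^{O(1)}$. The admissibility conditions (P3)--(P4) then arise from a further Siegel--Walfisz step on the remaining unlinked variables, much as you describe. In short: your algebraic interpretation of (P1)--(P4) is on target, but the analytic engine must combine bilinear large-sieve bounds with Siegel--Walfisz, not Siegel--Walfisz alone.
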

The assumptions on $f_i$ in Theorem~\ref{theorem:charsum} imply that
 \begin{equation}\label{eq:mertens}
 \sum_{p\leq N}\frac{f_i(p)}{p}\leq \lambda\log\log N+O(1)
 \end{equation}
 for every $i$, by Mertens' theorem.

\begin{definition}[Linked indices and admissible sets]
Let $\cI, \Phi$ be as in the setting of Theorem~\ref{theorem:charsum}.
We say that two indices $i,j\in\cI$ are {\em linked} if $\Phi(i,j)+\Phi(j,i)=1$, and {\em unlinked} if $\Phi(i,j)+\Phi(j,i)=0$.
We say that $\cU\subseteq\cI$ is an {\em unlinked set} of indices if any $i,j\in\cU$ are unlinked.
We say that an unlinked set $\cU$ is {\em admissible} if it satisfies both~\ref{prop:single} and~\ref{prop:double}.\end{definition}

\subsection{Preliminaries}
We recall several results that we will need to prove Theorem~\ref{theorem:charsum}.
The first is a version of the Siegel--Walfisz theorem worked out by 
Goldstein
\cite{Goldstein}.

\begin{lemma}\label{lemma:SW}
Let $\epsilon>0$. Let $K/\Q$ be a Galois extension of degree $n$. Let $\chi$ be a non-trivial primitive finite Hecke character of $K$ with conductor $\mathfrak{f}$ such that $$|\Disc(K/\Q)\cdot \Norm_{K/\Q}(\mathfrak{f})|\leq (\log N)^{\kappa}.
$$ 
Then 
\[
\sum_{\substack{\p\subset\OO_K\text{ prime}\\ \Norm_{K/\Q}(\p)\leq N\\\p\nmid \mathfrak{f}}}
\chi(\p)\ll 
N
\exp\left(-(\log N)^{\frac{1}{3}}\right),
\]
where the implied constant depends only on $n$ and $\kappa$.
\end{lemma}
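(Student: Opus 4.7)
The plan is to follow the classical Siegel--Walfisz blueprint, transplanted from Dirichlet $L$-functions to Hecke $L$-functions of the number field $K$. Throughout, set $Q \coloneqq |\Disc(K/\Q)| \cdot \Norm_{K/\Q}(\mathfrak{f})$, which by hypothesis satisfies $Q \le (\log N)^{\kappa}$.

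First, I would pass from the prime sum in the statement to a sum weighted by the Hecke von Mangoldt function,
\[
\psi_K(N,\chi) \coloneqq \sum_{\Norm_{K/\Q}(\mathfrak{a}) \le N} \chi(\mathfrak{a})\, \Lambda_K(\mathfrak{a}),
\]
noting that prime powers $\mathfrak{p}^k$ with $k\ge 2$ contribute only $O(nN^{1/2}\log N)$, and that partial summation converts $\psi_K(N,\chi)$ back to the unweighted prime sum with no loss. A truncated Perron formula then rewrites $\psi_K(N,\chi)$ as a contour integral of $-(L'/L)(s,\chi)\cdot N^{s}/s$ along a vertical segment to the right of $\sigma=1$, with truncation error controlled by a power of the height $T$ and a convexity bound for $L(s,\chi)$.

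Next, I would shift the contour past $\sigma=1$ into the critical strip. Since $\chi$ is nontrivial, $L(s,\chi)$ is entire, so the shift only picks up residues at the nontrivial zeros of $L(s,\chi)$. By the Hecke $L$-function analogue of the Landau zero-free region, as worked out for general number fields by Fogels and by Lagarias--Odlyzko, there is a constant $c_n>0$ depending only on the degree $n=[K:\Q]$ such that $L(s,\chi)$ has no zero $\rho=\beta+i\gamma$ with
\[
\beta \ge 1 - \frac{c_n}{\log\bigl(Q(|\gamma|+2)\bigr)},
\]
except possibly at a single real Siegel zero $\beta_0$. The exceptional zero is handled via Siegel's theorem: for any $\eta>0$ there exists $C_\eta>0$ with $\beta_0 < 1 - C_\eta Q^{-\eta}$ (ineffectively). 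Since $Q\le (\log N)^{\kappa}$, choosing $\eta$ small in terms of $\kappa$ ensures the Siegel zero contributes an acceptable amount.

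Combining these two inputs and optimising the contour shift against the truncation height $T$ yields $\psi_K(N,\chi) \ll N\exp\bigl(-c(\log N)^{1/3}\bigr)$, with the exponent $\tfrac{1}{3}$ emerging from balancing the logarithmic width of the zero-free region against the Siegel bound under $Q\le(\log N)^{\kappa}$. The main obstacle is the Siegel zero itself: since only the ineffective Siegel bound is available, the implied constant is necessarily ineffective, and the saving weakens from the classical $\exp(-c\sqrt{\log N})$ in the Dirichlet setting to $\exp\bigl(-c(\log N)^{1/3}\bigr)$ here. All dependencies reduce to $n$ and $\kappa$, as required; alternatively, the result may simply be cited as a direct application of Goldstein's theorem.
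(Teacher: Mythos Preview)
The paper does not give a proof of this lemma at all; it simply records it as ``a version of the Siegel--Walfisz theorem worked out by Goldstein~\cite{Goldstein}'' and cites that reference. Your final sentence already anticipates this: the intended ``proof'' is precisely a direct citation of Goldstein's theorem.

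Your sketch is the standard route to such a result (von Mangoldt smoothing, Perron, contour shift into a Landau-type zero-free region for the Hecke $L$-function, with the exceptional real zero handled by Siegel's ineffective bound), and is essentially what Goldstein carries out. One small caveat: your explanation for why the saving is $\exp\bigl(-(\log N)^{1/3}\bigr)$ rather than $\exp\bigl(-c\sqrt{\log N}\bigr)$ is not quite right as stated. In the classical Dirichlet case one also uses Siegel's ineffective bound and still obtains the exponent $\tfrac{1}{2}$; the weaker exponent here reflects the specific form of the result in Goldstein's paper rather than an intrinsic obstruction from the Siegel zero. For the purposes of this paper the exact exponent is irrelevant, since any saving of the shape $N\exp\bigl(-(\log N)^{\delta}\bigr)$ with $\delta>0$ would suffice in the applications in Section~6.
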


The next result is on the double oscillation of characters. An important result of this type by Heath-Brown \cite[Theorem~1]{HBcharsums} is enough for most cases, however the $(MN)^{\epsilon}$ term that appears causes problem when $N$ and $M$ are of very different sizes. An alternative form of this result \cite[Lemma~4]{HBSelmer1} removes this issue and is sufficient for our application. The optimal power saving that follows from the argument of \cite[Lemma~4]{HBSelmer1} can be found in \cite[Lemma~15]{FK4rank}, which takes the following form.
\begin{lemma}\label{lemma:largesieve}
 Let $a_m$ and $b_m$ be complex numbers such that  $|a_m|,|b_m|\leq 1$. Then for every $M,N\geq 1$ and for every $\epsilon>0$, we have
 \[\sum_{m\leq M}\sum_{n\leq N} a_mb_n\mu^2(2m)\leg{n}{m}\ll_{\epsilon}MN(M^{-\frac{1}{2}+\epsilon}+N^{-\frac{1}{2}+\epsilon}).\]
\end{lemma}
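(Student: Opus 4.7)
The plan is to derive this from Heath-Brown's large sieve inequality for real quadratic characters, which asserts that for complex coefficients $(c_n)$,
\[
\sum_{\substack{m\leq M\\ \mu^2(2m)=1}} \biggl|\sum_{n\leq N} c_n\Bigl(\frac{n}{m}\Bigr)\biggr|^2 \ll_\epsilon (MN)^\epsilon (M+N)\sum_{n\leq N}|c_n|^2.
\]
By quadratic reciprocity, $\left(\frac{n}{m}\right)$ and $\left(\frac{m}{n}\right)$ agree up to a sign depending only on the residues of $n$ and $m$ modulo~$8$. Splitting the sum over $n$ into residue classes modulo $8$ and absorbing the resulting sign into $b_n$, one may essentially swap the roles of $m$ and $n$. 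Hence, after symmetry, it suffices to prove the bound with $M^{-1/2+\epsilon}$ under the extra assumption that $M\leq N$.

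With this assumption in force, the main step is Cauchy--Schwarz on the outer sum:
\[
\biggl|\sum_{m\leq M}\sum_{n\leq N} a_m b_n \mu^2(2m)\Bigl(\frac{n}{m}\Bigr)\biggr|^2
\leq M \sum_{\substack{m\leq M\\ \mu^2(2m)=1}}\biggl|\sum_{n\leq N}b_n\Bigl(\frac{n}{m}\Bigr)\biggr|^2.
\]
Applying Heath-Brown's inequality to the inner sum with $c_n=b_n$ yields the bound $M\cdot (MN)^\epsilon(M+N)N$. Since $M\leq N$, this simplifies to $\ll (MN)^{2\epsilon}\cdot MN^2$. Taking square roots gives $(MN)^{\epsilon}\sqrt{MN^2}=MN\cdot M^{-1/2}(MN)^\epsilon$, which (after relabelling $\epsilon$) is the desired bound $MN\cdot M^{-1/2+\epsilon}$. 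The symmetric case $N\leq M$ produces $MN\cdot N^{-1/2+\epsilon}$, and adding the two estimates yields the lemma.

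The main obstacle is the bookkeeping around quadratic reciprocity: the Jacobi symbol $\left(\frac{n}{m}\right)$ is defined for odd $m$, while $n$ may have any parity, so the flip between $\left(\frac{n}{m}\right)$ and $\left(\frac{m}{n}\right)$ requires restricting $n$ to odd values (with a separate, simpler treatment of even $n$, since $\left(\frac{2n'}{m}\right)=\left(\frac{2}{m}\right)\left(\frac{n'}{m}\right)$ and the factor $\left(\frac{2}{m}\right)$ can be absorbed into $a_m$ without changing $|a_m|\leq 1$). Provided one carries out this residue-class decomposition carefully, the reduction to the case $M\leq N$ is legitimate, and the rest of the proof is the direct Cauchy--Schwarz plus large sieve combination sketched above.
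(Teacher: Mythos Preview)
Your argument has a genuine gap, and it is exactly the one the paper warns about in the paragraph preceding the lemma. Cauchy--Schwarz followed by Heath-Brown's large sieve gives, for $M\leq N$,
\[
\Bigl|\sum_{m,n}\Bigr|^2 \ll_\epsilon M\cdot (MN)^\epsilon(M+N)N \ll (MN)^{2\epsilon} M N^2,
\]
so after taking square roots you obtain $MN\cdot M^{-1/2}(MN)^{\epsilon}$, not $MN\cdot M^{-1/2+\epsilon}$. These are \emph{not} the same when $M$ and $N$ are of very different sizes: if $M$ is bounded (or merely a power of $\log N$) while $N\to\infty$, the factor $(MN)^\epsilon$ is of order $N^\epsilon$ and cannot be absorbed into $M^{\epsilon'}$ for any fixed $\epsilon'>0$. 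Your ``relabelling $\epsilon$'' step is therefore illegitimate, and the bound you actually prove is the weaker $MN(\min\{M,N\})^{-1/2}(MN)^\epsilon$. This is precisely why the paper does not prove the lemma via the large sieve of \cite{HBcharsums}: in the application (Section~6.6) one has $A_i\geq N^\dagger=(\log N)^C$ against variables that may be as large as $N$, and the extra $N^\epsilon$ would destroy the saving.

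The paper does not supply its own proof; it quotes the result as \cite[Lemma~15]{FK4rank}, which sharpens \cite[Lemma~4]{HBSelmer1}. Those arguments avoid the $(MN)^\epsilon$ loss by a different route: rather than the quadratic large sieve, one applies H\"older (or repeated Cauchy--Schwarz) to raise the $m$-sum to a high moment and then controls the resulting character sums over $n$ by P\'olya--Vinogradov, which costs only a power of the \emph{smaller} variable. If you want a self-contained proof, that is the approach to take.
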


\begin{lemma}[{\cite[Theorem~1]{Shiu}}]\label{lemma:Shiu}
 Let $f$ be a non-negative multiplicative function that satisfies $f(n)\leq C^{\omega(n)}$, for some constant $C\geq 1$.
 Then we have 
 \[\sum_{X-Y<n\leq X} f(n)\ll_C \frac{Y}{\log X}\exp\left(\sum_{p\leq X}\frac{f(p)}{p}\right)\]
 uniformly for $2\leq X\exp(-\sqrt{\log X})\leq Y<X$.
\end{lemma}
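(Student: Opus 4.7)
The plan is to prove this short-interval mean-value bound via a sieve-theoretic decomposition, following the strategy of Shiu's original paper. Introduce a cut-off parameter $z$ with $z \to \infty$ and $\log z = o(\log X)$, for instance $z = \exp(\sqrt{\log X})$ (comfortably smaller than $Y$ under the hypothesis $Y \geq X\exp(-\sqrt{\log X})$). Every integer $n$ admits a unique factorisation $n = ab$ where $a$ is the largest $z$-smooth divisor of $n$ and $\gcd(b, P(z)) = 1$, with $P(z) = \prod_{p \leq z} p$. Since $f$ is multiplicative, $f(n) = f(a) f(b)$, and so
\[
\sum_{X-Y < n \leq X} f(n) = \sum_{\substack{a \leq X \\ a \text{ is } z\text{-smooth}}} f(a) \sum_{\substack{(X-Y)/a < b \leq X/a \\ \gcd(b, P(z)) = 1}} f(b).
\]

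The inner sum over $z$-rough integers in a short interval is handled by an upper bound sieve (either Selberg's $\Lambda^2$ sieve or the fundamental lemma of combinatorial sieve theory). For the unweighted count, the sieve yields an estimate of order $(Y/a)/\log z$. To incorporate the weight $f(b)$, I would use that a $z$-rough $b \leq X/a$ has $\omega(b) \leq \log X / \log z$ prime factors; this combined with the bound $f(b) \leq C^{\omega(b)}$ and a weighted variant of the sieve (with level of distribution bounded by a small power of $Y/a$) gives
\[
\sum_{\substack{(X-Y)/a < b \leq X/a \\ \gcd(b,P(z)) = 1}} f(b) \ll \frac{Y/a}{\log X}\exp\!\left(\sum_{p \leq X} \frac{f(p)}{p}\right)\prod_{p \leq z}\!\left(1 + \tfrac{f(p)}{p}\right)^{-1}.
\]
Summing this against the $z$-smooth values of $a$ and using that $\sum_{a \ z\text{-smooth}} f(a)/a \ll \prod_{p \leq z}(1 + f(p)/p)$ cancels the final factor, leaving the claimed bound. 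The contribution from large smooth parts $a \geq X^{\epsilon}$ is controlled separately via Rankin's trick: the dyadic sum $\sum_{T < a \leq 2T,\; a \text{ is } z\text{-smooth}} f(a)$ is bounded by $T^{-\eta}\sum_a f(a) a^{\eta}$ for any $\eta > 0$, which decays in $T$ provided $\eta \log T$ dominates $\eta \log z \cdot \psi(z, \eta)$; tuning $\eta = 1/\log z$ yields a saving sufficient to absorb this range into the error term.

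The main obstacle is the weighted sieve estimate above, where one must simultaneously track the distribution of integers in a short interval, their coprimality to $P(z)$, and the $f$-weights—all without losing the desired factor $\exp(\sum_{p \leq X} f(p)/p)$ up to the correct constants. The sharpness required forbids naive application of Cauchy--Schwarz. Shiu's original device to handle this is an induction on the number of prime factors of $b$ above $z$: for each such prime $p$, write $b = pb'$ and reduce to an inner sum with one fewer rough prime factor, paying a factor of $f(p)/p$; the induction terminates because $b \leq X/a$ has boundedly many prime factors above $z$. The condition $Y \geq X\exp(-\sqrt{\log X})$ is exactly what is needed so that the sieve enjoys power-saving uniformity at scale $Y/a$ and so that the inductive losses from the rough factors remain bounded in terms of the Mertens-type sum.
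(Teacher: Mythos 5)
This lemma is not proved in the paper at all: it is cited verbatim as \cite[Theorem~1]{Shiu}, a standard Brun--Titchmarsh-type result for non-negative multiplicative functions, and the paper relies on Shiu's theorem as a black box. There is therefore no internal proof against which to compare your argument; you are attempting to reprove a known external result.

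Your sketch follows the broad skeleton of Shiu's original argument (smooth/rough decomposition $n = ab$, sieve bound for the rough part $b$ in a short interval, Rankin's trick to suppress the large-smooth-part contribution), but the central step is asserted rather than proved, and it is precisely the step that contains all of the difficulty. The displayed estimate
\[
\sum_{\substack{(X-Y)/a < b \leq X/a \\ \gcd(b,P(z)) = 1}} f(b) \ll \frac{Y/a}{\log X}\exp\left(\sum_{p \leq X} \frac{f(p)}{p}\right)\prod_{p \leq z}\left(1 + \frac{f(p)}{p}\right)^{-1}
\]
is not a consequence of any standard weighted sieve in the form stated. For $z$-rough $b\leq X/a$ the weight $f(b)$ can be as large as $C^{\omega(b)}$ with $\omega(b)$ up to $\log X/\log z$, and no amount of Cauchy--Schwarz or naive pointwise bounding survives this. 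You gesture at ``Shiu's device'' of an induction on the number of rough prime factors, but you do not explain how the losses from repeatedly factoring out a rough prime $p$ and shrinking the interval to length $Y/(ap)$ are re-summed without blowing up the constant; this bookkeeping, together with a careful split of the smooth parts $a$ into several ranges and the correct interplay between the smoothness cut-off $z$ and the sieve level, is exactly where Shiu's work lies. Your choice $z=\exp(\sqrt{\log X})$ also couples the two roles of $z$ (defining smoothness and defining the sieving range) in a way that isn't how Shiu's argument is organised and makes the asserted sieve constant unverifiable as written. In short, the sketch correctly identifies the strategic ingredients, but the proof is not self-contained; the honest move here is to do what the paper does and cite Shiu (or a textbook treatment such as Tenenbaum or Koukoulopoulos), since reproducing the full argument is a nontrivial undertaking well beyond the scope of this lemma.
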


\begin{lemma}[{\cite[Lemma A, p.~265]{HR}}]\label{lemma:HR}
Uniformly for positive integer $\ell$ and $X\geq 2$, we have
\[\#\{n\leq X:\omega(n)=\ell\}\ll \frac{X}{\log X}\cdot \frac{(\log\log X + C_0)^{\ell-1}}{(\ell-1) !},\]
where $C_0$ is an absolute constant.
\end{lemma}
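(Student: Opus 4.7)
My plan is to proceed by induction on $\ell\geq 1$, writing $\pi_\ell(X)=\#\{n\leq X:\omega(n)=\ell\}$ and aiming to establish the slightly sharper form
\[\pi_\ell(X)\leq A\cdot\frac{X}{\log X}\cdot\frac{(\log\log X+C_0)^{\ell-1}}{(\ell-1)!}\qquad (X\geq 2)\]
with absolute constants $A, C_0>0$ chosen once and for all at the outset. The base case $\ell=1$ reduces to counting prime powers $\leq X$ and follows from Chebyshev's bound $\pi(X)\ll X/\log X$, matching the right-hand side since $(\log\log X+C_0)^{0}/0!=1$.

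For the inductive step I would employ the double-counting identity
\[\ell\cdot\pi_\ell(X)=\sum_{p\leq X}\sum_{\substack{a\geq 1\\ p^a\leq X}}\#\{m\leq X/p^a:p\nmid m,\ \omega(m)=\ell-1\}\leq\sum_{p,a:\,p^a\leq X}\pi_{\ell-1}(X/p^a),\]
obtained by summing over pairs $(n,p)$ with $p\mid n$ and writing $n=p^am$ with $(m,p)=1$. The induction hypothesis is then applied to each term $\pi_{\ell-1}(X/p^a)$ with $X/p^a\geq 2$ (the contribution with $X/p^a<2$ vanishes once $\ell\geq 2$).

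The main estimate splits the resulting sum at $p^a=\sqrt X$. On the range $p^a\leq\sqrt X$ one has $\log(X/p^a)\geq\tfrac12\log X$ and $\log\log(X/p^a)\leq\log\log X$, so Mertens' theorem yields
\[\sum_{p^a\leq\sqrt X}\pi_{\ell-1}(X/p^a)\ll\frac{X}{\log X}\cdot\frac{(\log\log X+C_0)^{\ell-2}}{(\ell-2)!}\bigl(\log\log X+O(1)\bigr),\]
and dividing by $\ell$ reproduces the target bound, provided $C_0$ is large enough to absorb the additive $O(1)$ into $(\log\log X+C_0)$ and one exploits $1/(\ell\cdot(\ell-2)!)=(\ell-1)/(\ell\cdot(\ell-1)!)\leq 1/(\ell-1)!$. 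For the complementary range $p^a>\sqrt X$, necessarily $a=1$ (the higher-power part $\sum_{a\geq 2}X/p^a\ll X$ being harmlessly subordinate), and the induction hypothesis combined with the change of variable $y=X/p$ converts the tail into a quantity of order $X\int_{2}^{\sqrt X}\frac{dy}{y\log y\log(X/y)}\cdot\frac{(\log\log X+C_0)^{\ell-2}}{(\ell-2)!}\ll\frac{X\log\log X}{\log X}\cdot\frac{(\log\log X+C_0)^{\ell-2}}{(\ell-2)!}$, which is of the same order as the main-range contribution.

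The main obstacle is the uniformity in $\ell$: the inductive bound must be propagated with the \emph{same} absolute constants $A$ and $C_0$ at every level. Once $C_0$ is fixed large enough to dominate the Mertens $O(1)$ error and to ensure that the Case B contribution only multiplies the main-range bound by an absolute factor, the factorial structure $(\log\log X+C_0)^{\ell-1}/(\ell-1)!$ regenerates itself automatically upon dividing by $\ell$, and the induction closes cleanly and uniformly in $\ell$ and $X$.
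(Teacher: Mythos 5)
The paper does not prove this lemma; it is cited directly to Hardy--Ramanujan. So the question is simply whether your proposed induction is sound, and unfortunately it has a gap at exactly the step you wave away at the end.

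Your base case, the double-counting identity $\ell\,\pi_\ell(X)\leq\sum_{p^a\leq X}\pi_{\ell-1}(X/p^a)$, and the split at $\sqrt{X}$ are all fine, and your evaluation of the tail integral as $\ll (X\log\log X)/\log X$ is also correct. The problem is the claim that ``the factorial structure regenerates itself automatically upon dividing by $\ell$, and the induction closes cleanly.'' It does not. Suppose the inductive hypothesis is $\pi_{\ell-1}(Y)\leq A\,\tfrac{Y}{\log Y}\tfrac{(\log\log Y+C_0)^{\ell-2}}{(\ell-2)!}$ with fixed $A,C_0$. In the main range you use $\log(X/p^a)\geq\tfrac12\log X$, which costs a hard factor of $2$; the Mertens estimate gives $\sum_{p^a\leq\sqrt X}1/p^a=\log\log X+O(1)$, and even after absorbing the $O(1)$ into $C_0$ and using $\tfrac{\ell-1}{\ell}\leq 1$, the main range delivers at best $2A\cdot\tfrac{X}{\log X}\tfrac{(\log\log X+C_0)^{\ell-1}}{(\ell-1)!}$. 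Your tail range, as you note, contributes a quantity of the same order, adding a further constant (independent of $A$) on top. So the inductive step proves $\pi_\ell(X)\leq cA\cdot\tfrac{X}{\log X}\tfrac{(\log\log X+C_0)^{\ell-1}}{(\ell-1)!}$ with some absolute $c>1$, not $\leq A(\cdots)$. Iterating gives $\pi_\ell(X)\leq c^{\ell-1}A\,(\cdots)$, i.e.\ a parasitic factor $c^{\ell-1}$ which for $\ell\asymp\log\log X$ (the relevant range in this paper) is a power of $\log X$. You cannot absorb $c^{\ell-1}$ into $(\log\log X+C_0)^{\ell-1}$ by enlarging $C_0$, since that would require $c(\log\log X+C_0)\leq\log\log X+C_0'$ uniformly in $X$, which fails. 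Replacing $\sqrt X$ by $X^{\theta}$ for any fixed $\theta\in(0,1)$, or even $\theta=\theta(\ell)$, does not repair this: some absolute loss strictly exceeding $1$ per step remains, and the tail term (which does not carry the factor $A$) prevents the inequality $cA\leq A$ from being achievable.

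The actual Hardy--Ramanujan argument sidesteps this by working with a $\log$-weighted count. One uses $\Pi_\ell(X)\log\sqrt X\leq\Pi_\ell(\sqrt X)\log\sqrt X+\sum_{n\leq X,\ \omega(n)=\ell}\log n$ and then $\sum_{n\leq X,\ \omega(n)=\ell}\log n\leq\sum_{p^a\leq X}\log p\cdot a\,\Pi_{\ell-1}(X/p^a)$. The crucial gain is that in the main range $p\leq\sqrt X$ one now sums $\sum_{p\leq\sqrt X}(\log p)/p\sim\tfrac12\log X$, and this $\tfrac12$ exactly cancels the factor $2$ coming from $\log(X/p)\geq\tfrac12\log X$ --- a cancellation your version (which sums $\sum 1/p$ rather than $\sum(\log p)/p$) simply does not have. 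This, together with a more careful handling of the range of $\ell$ relative to $\log\log X$, is what makes the induction close with a genuinely fixed constant. As written, your proof would need to be reworked along these lines; the place it breaks is precisely the final paragraph asserting uniform closure.
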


\subsection{Dissection}

Define \[\Omega\coloneqq \lceil e\cdot \#\cI\cdot (\log\log N+C_0)\rceil,\]
where $C_0$ is the constant in Lemma~\ref{lemma:HR}. We will dissect the sum according to the size of each $D_i$ with the dissection parameter 
\[\Delta\coloneqq 1+(\log N)^{-\#\cI}.\] 
For each $i\in\cI$, define a number $A_i$ of the form $1,\Delta,\Delta^2,\dots$. 
For each $\mathbf{A}=(A_i)_{i\in\cI}$,
define the restricted sum
\[
S(N,\mathbf{A})=\sum_{(D_i)}
 \prod_i f_i(D_i)\chi_i(D_i)
 \prod_{i,j}\leg{D_i}{D_j}^{\Phi(i,j)},
\]
where the sum over $(D_i)$ is subject to the conditions \[\gcd(D_i,2\prod_i\beta_i)=1,\quad \omega(D_i)\leq\Omega,\quad
A_i\leq D_i<\Delta A_i,\quad\prod_i D_i\in\D(N),\text{ and }\]
\begin{equation}\label{eq:AsumJ}
\prod_{i\in\cI\setminus\cJ_k}D_i\neq 1\text{ for every }k=1,\dots,r.\end{equation}

If $A_i=1$, the condition $A_i\leq D_i<\Delta A_i$ implies that $D_i=1$.
If $\prod_{i\in\cI\setminus\cJ_k}A_i\neq 1$ for every $k=1,\dots,r$, the condition \eqref{eq:AsumJ} can be dropped from the sum $S(N,\mathbf{A})$. If $\prod_{i\in\cI\setminus\cJ_k}A_i= 1$ holds for some $k$, the condition \eqref{eq:AsumJ} forces $S(N,\mathbf{A})=0$.

The number of possible $A_i$ up to $N$ is bounded by
\[\frac{\log N}{\log \Delta}=\frac{\log N}{\log (1+(\log N)^{-\#\cI})}\ll (\log N)^{1+\#\cI}.\]
Since there are $\#\cI$-many variables, the number of $\mathbf{A}$ such that $S(N,\mathbf{A})$ is non-trivial is 
\[\ll (\log N)^{\#\cI(1+\#\cI)},\]
We also define here two parameters
\[
 N^{\dagger}\coloneqq(\log N)^{4\left(1+\#\cI\cdot (1+\#\cI)\right)}
 \quad 
 \text{ and }
\quad 
N^{\ddagger}\coloneqq\exp\left((\log N)^{\frac{1}{\#\cI}\cdot \epsilon}\right).
\]

\subsection{Number of prime factors of the variables}
We bound the contribution from those $(D_i)$ such that
$\omega(D_i)\geq \Omega$ for some $i\in\cI$.

\begin{lemma}\label{lemma:omegabound}
\[S(N)=
\sum_{\mathbf{A}} S(N,\mathbf{A})+O(N(\log N)^{-1}),
 \]
 where the sum is over $\mathbf{A}$ is such that \[\prod_{i\in\cI} A_i \leq 
N.
 \] 
\end{lemma}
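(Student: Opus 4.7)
My plan is to interpret the error $S(N)-\sum_{\mathbf{A}} S(N,\mathbf{A})$ as the contribution from those tuples $(D_i)_{i\in\cI}$ satisfying the constraints of $S(N)$ but with $\omega(D_i)>\Omega$ for some $i\in\cI$. The boxes $A_i\leq D_i<\Delta A_i$ with $A_i\in\{1,\Delta,\Delta^2,\dots\}$ tile $\ZZ_{>0}^{\cI}$, and since $A_i\leq D_i$ forces $\prod_i A_i\leq\prod_i D_i\leq N$, the constraint $\prod_i A_i\leq N$ on the sum over $\mathbf{A}$ loses nothing; thus the dissection only introduces the extra condition $\max_i\omega(D_i)\leq\Omega$.

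Because $\prod_i D_i$ is square-free, each $D_i$ is square-free, and multiplicativity together with the hypothesis $f_{i,q}\in[0,1]$ gives $f_i(D_i)\in[0,1]$. As $|\chi_i(D_i)|\leq 1$ and Jacobi symbols are bounded by $1$, the triangle inequality yields
\[
\left|S(N)-\sum_{\mathbf{A}} S(N,\mathbf{A})\right|\leq \#\left\{(D_i)\in\ZZ_{>0}^{\cI}:\prod_i D_i\in\D(N),\ \max_i\omega(D_i)>\Omega\right\}.
\]
Any such tuple satisfies $\omega(\prod_i D_i)\geq\max_i\omega(D_i)>\Omega$, and a square-free $n$ has exactly $(\#\cI)^{\omega(n)}$ ordered factorisations as a product of $\#\cI$ positive divisors. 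The right-hand side is therefore at most
$\sum_{n\in\D(N),\,\omega(n)>\Omega}(\#\cI)^{\omega(n)}$.

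To bound this, I would apply Lemma~\ref{lemma:HR} and write $L\coloneqq\log\log N+C_0$, $x\coloneqq\#\cI\cdot L$, obtaining
\[
\sum_{\substack{n\in\D(N)\\\omega(n)>\Omega}}(\#\cI)^{\omega(n)}\ll \frac{\#\cI\cdot N}{\log N}\sum_{m\geq\Omega}\frac{x^m}{m!}.
\]
Because $\Omega=\lceil ex\rceil$, the ratio of consecutive terms of the inner sum satisfies $x/(m+1)\leq x/\Omega\leq 1/e$, so the sum is geometric and bounded by a constant multiple of its leading term. Stirling's formula then gives
\[
\frac{x^\Omega}{\Omega!}\leq\frac{(ex/\Omega)^\Omega}{\sqrt{2\pi\Omega}}\leq\frac{1}{\sqrt{2\pi\Omega}}\ll\frac{1}{\sqrt{\log\log N}},
\]
and multiplying through produces an error of $O(N/(\log N\sqrt{\log\log N}))=O(N(\log N)^{-1})$, as required.

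The one delicate point is the calibration of the constant $e$ in the definition of $\Omega=\lceil e\cdot\#\cI\cdot(\log\log N+C_0)\rceil$: this choice is precisely what forces $ex/\Omega\leq 1$, so that Stirling barely produces the necessary decay. Everything else is routine book-keeping with the triangle inequality and Hardy--Ramanujan.
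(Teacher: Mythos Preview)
Your proof is correct and follows essentially the same approach as the paper: bound the discarded summands trivially by $1$, reduce to counting square-free $n\leq N$ with $\omega(n)>\Omega$ weighted by $(\#\cI)^{\omega(n)}$, and apply the Hardy--Ramanujan bound together with Stirling to conclude. You are in fact a little more careful than the paper in explaining why the dissection over boxes with $\prod_i A_i\leq N$ captures exactly the extra constraint $\max_i\omega(D_i)\leq\Omega$, and in tracking the index shift $m=\ell-1$ after applying Lemma~\ref{lemma:HR}.
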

\begin{proof}
We bound each summand of $S(N)$ that does not satisfy $\omega(D_i)<\Omega$ trivially by $1$.
If $\omega(D_i)\geq \Omega$ for some $i\in\cI$, then setting $n=\prod_{i\in\cI}D_i$, we must have $\omega(n)\geq \Omega$. 
Given $n$, each prime factor of $n$ divides one of the $\#\cI$-many $D_i$.
Therefore the contribution from such $n$ is bounded by
\[\ll \sum_{\substack{n\leq N\\ \omega(n)\geq \Omega}}(\#\cI)^{\omega(n)}.\]
By Lemma~\ref{lemma:HR}, we can bound this sum by 
\begin{align*}
&\ll
\frac{N}{\log N}
\sum_{\ell\geq \Omega}
\frac{(\#\cI)^{\ell}(\log\log N+C_0)^{\ell}}{\ell!}\\
&\leq
\frac{N}{\log N}
\frac{(\#\cI)^{\Omega}(\log\log N+C_0)^{\Omega}}{\Omega!}
\sum_{\ell\geq 0}
\frac{(\#\cI)^{\ell}(\log\log N+C_0)^{\ell}}{\Omega^{\ell}}.
\end{align*}
The last sum is a geometric series. Substituting 
$\Omega\coloneqq \lceil e \cdot \#\cI\cdot (\log\log N+C_0)\rceil$ and using Stirling's approximation, we get the upper bound $N(\log N)^{-1}$, as desired.
\end{proof}

\subsection{Incomplete boxes}
We bound the contribution from 
\[\cF_1\coloneqq\left\{\mathbf{A}:\prod_i A_i\geq \Delta^{-\#\cI}N\right\}.\]
Bounding the summands of $S(N,\mathbf{A})$ trivially by $1$, we have
\[\sum_{\mathbf{A}\in\cF_1} |S(N,\mathbf{A})|\leq
 \sum_{\Delta^{-\#\cI}N\leq n\leq N}
(\#\cI)^{\omega(n)}
\ll \left(1-\Delta^{-\#\cI}\right)N(\log N)^{\#\cI-1},
\]
where the last expression follows from Lemma~\ref{lemma:Shiu}.
Since $\Delta=1+(\log N)^{-\#\cI}$,
we have
\[\Delta^{-\#\cI}=\left(1+(\log N)^{-\#\cI}\right)^{-\#\cI}=1-\#\cI(\log N)^{-\#\cI}+O((\log N)^{-2\#\cI}),\]
so inserting this gives
\begin{equation}\label{eq:incomplete}
 \sum_{\mathbf{A}\in\cF_1} |S(N,\mathbf{A})|\ll N(\log N)^{-1}.
\end{equation}

\subsection{Few large indices}

We want to bound the contribution from $\mathbf{A}$ with very few large indices. We will bound the contribution from the set
\[
 \cF_2\coloneqq 
 \left\{\mathbf{A}:\#\{i\in\cI:A_i\geq N^{\ddagger}\}<M\right\}.
\]
Let $m$ be the product of those $D_i\geq N^{\ddagger}$ and let $\cW =\{i\in\mathcal{I}: A_i\geq N^{\ddagger}\}$, then
\[
\sum_{\substack{\mathbf{A}\in\cF_2\\ \#\cW =r}} |S(N,\mathbf{A})|
\leq
\sum_{m\leq (N^{\ddagger})^{\#\cI-r}}
(\#\cI-r)^{\omega(m)}
\sum_{\substack{(D_i)_{i\in\cW }\\ \prod_{i\in\cW } D_i\leq \frac{N}{m}}}
\prod_{\substack{i\in\cW }}f_i(D_i).
\]
Applying Lemma~\ref{lemma:Shiu}, the inner sum becomes
\[\sum_{\substack{(D_i)_{i\in\cW }\\ \prod_{i\in\cW } D_i\leq \frac{N}{m}}}
\prod_{\substack{i\in\cW }}f_i(D_i)\ll \frac{N}{m\log N}\exp\left(\sum_{p\leq N}\sum_{i\in\cW}\frac{f_i(p)}{p}\right)
\ll \frac{N}{m}(\log N)^{r\lambda-1},
\]
where we have applied~\eqref{eq:mertens}.
Then putting this back
\[
\sum_{\substack{\mathbf{A}\in\cF_2\\ \#\cW =r}} |S(N,\mathbf{A})|
\ll
N
(\log N)^{r\lambda-1}
\sum_{m\leq (N^{\ddagger})^{\#\cI-r}}
\frac{(\#\cI-r)^{\omega(m)}}{m}
\ll 
N(\log N)^{r\lambda-1+\epsilon}
. 
\]
Summing over $r\leq M-1$, we have
\begin{equation}\label{eq:fewlarge}
\sum_{\mathbf{A}\in\cF_2} |S(N,\mathbf{A})|\ll
N(\log N)^{(M-1)\lambda-1+\epsilon}.
\end{equation}

\subsection{Two large linked indices}

Let
\[\cF_3\coloneqq\{\mathbf{A}: A_i,\ A_j\geq N^{\dagger}\text{ for some linked } i,j\}\setminus \cF_1.\]
If $i$ and $j$ are linked, exactly one of $\leg{D_i}{D_j}$ and $\leg{D_j}{D_i}$ can appear in the sum.

For any $\mathbf{A}\in\cF_3$, we have
\[|S(N,\mathbf{A})|\\
\ll
\sum_{(D_u)_{u\in\cI\setminus\{i,j\}}}
\left|\sum_{A_i\leq D_i<\Delta A_i}\sum_{A_j\leq D_j<\Delta A_j}
a(D_i)b(D_j)
\leg{D_i}{D_j}\right|,
\]
where
\begin{align*}
a(D_i)&=f_i(D_i)\chi_i(D_i)\prod_{k\neq i,j}\leg{D_i}{D_k}^{\Phi(i,k)}\leg{D_k}{D_i}^{\Phi(k,i)},\\
b(D_j)&=f_j(D_j)\chi_j(D_j)\prod_{k\neq i,j}\leg{D_j}{D_k}^{\Phi(j,k)}\leg{D_k}{D_j}^{\Phi(k,j)}
.
\end{align*}
It follows from the assumptions that $|a(D_i)|,|b(D_j)|\leq 1$.
Apply Lemma~\ref{lemma:largesieve}, then since
$A_i,A_j\geq N^{\dagger}$,
we obtain
\[|S(N,\mathbf{A})|\ll N(N^{\dagger})^{-\frac{1}{2}+\epsilon}.\]
Summing over $O((\log N)^{\#\cI\cdot (1+\#\cI)})$ possible $\mathbf{A}$, we conclude that
\begin{equation}\label{eq:largelinked}
 \sum_{\mathbf{A}\in\cF_3}|S(N,\mathbf{A})|\ll N(N^{\dagger})^{-\frac{1}{2}+\epsilon}(\log N)^{\#\cI\cdot (1+\#\cI)}
\ll N(\log N)^{-1}.
\end{equation}

\subsection{One large and one small linked indices}
Define
\[\cF_4=\{\mathbf{A}: 2\leq A_j<N^{\dagger},\ A_i\geq N^{\ddagger}\text{ for some linked }i,j\}\setminus (\cF_1\cup\cF_3).\]
Any $j$ that is linked to $i$ must satisfy $A_j<N^{\dagger}$ since we assumed that $\mathbf{A}\notin\cF_3$.

Set
\[\chi (D_i)=\chi_i(D_i)\prod_{j\neq i}\leg{D_i}{D_j}^{\Phi(i,j)}\leg{D_j}{D_i}^{\Phi(j,i)}.\]
If $i$ and $j$ are linked, then exactly one of $\leg{D_i}{D_j}$ and $\leg{D_j}{D_i}$ appears non-trivially in the expression.
If $i$ and $j$ are unlinked, then either $\Phi(i,j)=\Phi(j,i)=0$, so neither symbol appears, or $\Phi(i,j)=\Phi(j,i)=1$, in which case we collect the two symbols as
\begin{equation}\label{eq:quad-rec}
\leg{D_i}{D_j}\leg{D_j}{D_i}=(-1)^{\frac{D_i-1}{2}\cdot \frac{D_j-1}{2}}=
\leg{(-1)^{\frac{D_j-1}{2}}}{D_i}.
\end{equation}
Therefore $\chi$ lifts to a character in $K_i$ with modulus
\[4\beta_i\prod_{j\text{ linked to }i}D_j<4\beta_i(N^{\dagger})^{\#\cI-1}<4(\log N)^\kappa(N^{\dagger})^{\#\cI}
,\]
where $\kappa$ is as in the assumptions of Theorem~\ref{theorem:charsum}.
Then
\[S(N,\mathbf{A})\ll
\sum_{(D_j)_{j\in\cI\setminus\{i\}}}
\left|\sum_{D_i}
f_i(D_i)
\chi(D_i)\right|.
\]
Now apply the assumption on the number of prime factors of $D_i$, so
\[
S(N,\mathbf{A})\ll 
\sum_{(D_j)_{j\in\cI\setminus\{i\}}}
\sum_{\ell=1}^{\Omega}
\left|
\sum_{\substack{D_i \\ \omega(D_i)=\ell}}
f_i(D_i)\chi(D_i )
\right|.
\]
Recall from the assumptions that $f_i(p)$ only depends on the value of $p\bmod c_i$.
Write $D_i=p_1\cdots p_{\ell}$ and $p_1<p_2<\dots<p_\ell$. 
Then $\ell\leq \Omega$ and 
we have
$$
S(N,\mathbf{A})\ll 
\sum_{(D_j)_{j\in\cI\setminus\{i\}}}
\sum_{\ell=1}^{\Omega}
\sum_{p_1,\dots, p_{\ell-1}}
\max_{b\in(\Z/4c_i\Z)^{\times}}\left|
\sum_{\substack{p_{\ell}\equiv b\bmod 4c_i\\ p_\ell\nmid \prod_j \beta_j}} \mu^2\left(2p_1\dots p_{\ell}\prod_{j\in\cI\setminus\{i\}}D_j\right) \chi(p_{\ell})
\right|,
$$
where
\[(N^{\ddagger})^{\frac{1}{\Omega}}<( A_i)^{\frac{1}{\ell}}
<p_{\ell}
<\frac{\Delta A_i}{p_1\cdots p_{\ell-1}}
.\]

We use a sum of Dirichlet characters $\psi$ mod $4c_i$ to detect the condition $p_{\ell}\equiv b\bmod 4c_i$. 
Take $\chi'$ to be the character in $K_i$ such that $\chi'(\p)=\chi\psi(\Norm_{K_i/\Q}(\p))$ for any degree $1$ prime $\p$ in $K_i$. At any prime $p$ that splits completely in $K_i/\Q$, the value of $\chi'$ is independent of the choice of $\p$ above $p$.
The number of prime ideals $\p$ in $K_i$ of degree greater than $1$ with norm up to $X$ is bounded by $O(X^{\frac{1}{2}})$, since then $p^2\mid \Norm_{K_i/\Q}(\p)$. Recall that $\chi$ is $0$ at any prime $p$ that does not split completely in $K_i$. Hence we obtain
\[\sum_{p\leq X} \chi(p)\psi(p)=\frac{1}{[K_i:\Q]}\sum_{\substack{\p\subset\OO_K\text{ prime}\\ \Norm_{K_i/\Q}(\p)\leq X}}
\chi'(\p)
+O(X^{\frac{1}{2}}).
\]
Notice that $\chi'$ must be non-trivial because $D_j\geq 2$ for some $j$ linked to $i$, and by assumption $D_j$ is coprime to $4c_i$.

Now apply Lemma~\ref{lemma:SW}, noting that
the modulus of the character $\chi'$ is less than $4(\log N)^{\kappa}(N^{\dagger})^{\#\cI}=
4(\log N)^{\kappa+4\#\cI(1+\#\cI\cdot(1+\#\cI))}$.
  Then, for any $B>0$, we obtain
\[\sum_{p_{\ell}\equiv \kappa\bmod 4c_i}\mu^2\left(2p_1\dots p_{\ell}\prod_{j\in\cI\setminus\{i\}}D_j\right)\chi(p_{\ell}) 
\ll _{B}
\frac{A_i}{p_1\cdots p_{\ell-1}}\left(\frac{1}{\Omega}\log N^{\ddagger}\right)^{-B}+ \Omega,
\]
where the last term  comes from those $p_\ell$ that  divide some
$2p_1\cdots p_{\ell-1}\prod_{j\neq i} D_j\prod_j \beta_j$.
Summing over $p_{\ell-1}, \ldots, p_1$, and then over $(D_j)_{j\in\cI\setminus\{i\}}$, we have
\[
S(N,\mathbf{A})
\ll_{B} N (\log N)\left(\frac{1}{\Omega}\log N^{\ddagger}\right)^{-B}.
\]
Picking $B$ large enough and  summing over  $O((\log N)^{\#\cI\cdot (1+\#\cI)})$ possible $\mathbf{A}$, we finally deduce that 
\begin{equation}\label{eq:largesmalllinked}
 \sum_{\mathbf{A}\in\cF_4}|S(N,\mathbf{A})|
\ll N(\log N)^{-1}.
\end{equation}
\subsection{Inadmissible sets}
Define
\[\cF_5\coloneqq \bigcup_{\substack{\cU\text{ unlinked }\\\text{inadmissible}}}\cF_5(\cU),\]
where
\[\cF_5(\cU)\coloneqq\{\mathbf{A}: A_k=1\text{ for all }k\notin\cU,\ A_i\geq N^{\ddagger}\text{ for all }i\in\cU\}.\]
If $\cU$ does not satisfy~\ref{prop:single}, we can take $i\in\cU$ such that $\Phi(i,j)=0$ for all $j\in\cU$ and $\sqrt{\alpha_i}\notin K_i(\zeta_{c_i})$ so that $\chi_i\psi$ is non-trivial for any Dirichlet character $\psi\bmod c_i$.
Then the argument is similar to that for $\mathbf{A}\in\cF_4$, but where $b$ is instead taken from $(\Z/c_i\Z)^{\times}$ and the sum is over $p_{\ell}\equiv b\bmod c_i$.

If $\cU$ does not satisfy~\ref{prop:double}, take $\{i,j\}\subseteq\cU$ such that $\Phi(i,k)=\Phi(j,k)$ for all $k\in\cU$, assume that $\sqrt{\alpha_i\alpha_j}\notin K_i(\zeta_{c_i})\cdot  K_j(\zeta_{c_j})$.
Set
\[\varphi(D_i)=\prod_{k\in\cU\setminus\{i,j\}}\leg{D_i}{D_k}^{\Phi(i,k)}\leg{D_k}{D_i}^{\Phi(k,i)}.\]
Notice that $\Phi(i,j)=\Phi(j,j)=0$ and also $\Phi(j,i)=\Phi(i,i)=0$ by assumption.
Since the indices in $\mathcal{U}$ are unlinked,
it follows from \eqref{eq:quad-rec} that 
 $\varphi(D_i)$ is either $\leg{-1}{D_i}$ or trivial depending on $D_k$ and the values of $\Phi(i,k)$ for $k\in\mathcal{U}\setminus\{i,j\}$.
Since $\Phi(i,k)=\Phi(j,k)$ for all $k\in\cU$, we have
\[S(N,\mathbf{A})\ll
\sum_{(D_k)_{k\in\cU\setminus\{i,j\}}}
\left|\sum_{A_i\leq D_i<\Delta A_i}
f_i(D_i)\chi_i(D_i)\varphi(D_i)\sum_{A_j\leq D_j<\Delta A_j}f_j(D_j)\chi_j(D_j)\varphi(D_j)\right|.
\]
Since $\sqrt{\alpha_i\alpha_j}\notin K_i(\zeta_{c_i})\cdot  K_j(\zeta_{c_j})$, we deduce that 
$K_i(\sqrt{\alpha_i})\cdot K_j(\sqrt{\alpha_j})\not\subseteq K_i(\zeta_{c_i})\cdot  K_j(\zeta_{c_j})$ and $K_i(\sqrt{-\alpha_i})\cdot K_j(\sqrt{-\alpha_j})\not\subseteq K_i(\zeta_{c_i})\cdot  K_j(\zeta_{c_j})$.
Therefore at least one of 
$K_i(\sqrt{\alpha_i})\not\subseteq K_i(\zeta_{c_i})$ and $K_j(\sqrt{\alpha_j})\not\subseteq  K_j(\zeta_{c_j})$ holds, and similarly at least one of 
$K_i(\sqrt{-\alpha_i})\not\subseteq K_i(\zeta_{c_i})$ and $K_j(\sqrt{-\alpha_j})\not\subseteq  K_j(\zeta_{c_j})$ holds.
This allows us to take some $k\in\{i,j\}$ such that the character $\chi_k\varphi\psi$ is non-trivial for every Dirichlet character $\psi\bmod c_k$.
Then the argument proceeds similarly as in the case for $\mathbf{A}\in\cF_4$ by applying Lemma~\ref{lemma:SW} to $\chi_k\varphi\psi$.

Therefore we conclude that
\begin{equation}\label{eq:notadmissible}
 \sum_{\mathbf{A}\in\cF_5}|S(N,\mathbf{A})|
\ll N(\log N)^{-1}.
\end{equation}

\subsection{Proof of Theorem~\ref{theorem:charsum}} 
By Lemma~\ref{lemma:omegabound},~\eqref{eq:incomplete},\eqref{eq:fewlarge},~\eqref{eq:largelinked},~\eqref{eq:largesmalllinked}, and~\eqref{eq:notadmissible}, it remains to consider 
\[\sum_{\mathbf{A}\notin\cF_1\cup\cF_2\cup\cF_3\cup\cF_4\cup\cF_5}S(N,\mathbf{A}).\]
Since $\mathbf{A}\notin\cF_2$, there are at least $M$ indices such that $A_i\geq N^{\ddagger}$. Call this set of indices $\cU$.
Any two indices in $\cU$ must be unlinked because $\mathbf{A}\notin\cF_3$. The assumption 
that  any unlinked set must have size $\leq M$ implies that $\cU$ must have size exactly $M$.
Moreover any other indices not in $\cU$ must be linked to some indices in $\cU$, and so we are forced to have $A_j=1$ for any $j\notin\cU$ because $\mathbf{A}\notin\cF_3\cup\cF_4$. 

Now if $\cU\subseteq\cJ_k$ for some $k\in\{1,\dots,r\}$, then $A_j=1$ for all $j\in\cI\setminus\cJ_k$. This would contradict the condition $\prod_{i\in\cI\setminus\cJ_k}D_i\neq 1$. Therefore we can assume that $\cU\not\subseteq\cJ_k$.
Also $\mathbf{A}\notin\cF_5$ allows us to assume that $\cU$ is admissible.
Substituting $D_i=1$ for all $i\notin\cU$ gives the main term in Theorem~\ref{theorem:charsum}.

\section{Quadratic twists with full two-torsion}\label{s:7}

In this section, we will prove Theorem~\ref{theorem:mainfullt}.
The lower bound in Theorem~\ref{theorem:mainfullt} follows from Lemma~\ref{lemma:twistslb}, 
and so it remains to prove the upper bound.
For convenience, we shall work with a different model for the elliptic curves. If a monic polynomial $f\in \ZZ[x]$ has three distinct roots $r_1<r_2<r_3$ over $\Q$, then $r_1,r_2,r_3\in\Z$ and it follows that $f(x+r_1)=x(x-(r_2-r_1))(x-(r_3-r_1))$.
Therefore, on using  a linear transformation, it suffices to consider the integral points with respect to the model
\begin{equation}\label{eq:model1}
E_D:y^2=x(x-AD)(x-BD),
\end{equation}
where $0<A<B$ are integers that are fixed, and $D\in\D$.
Accordingly, we define
\[E_D^*(\Z)\coloneqq E_D(\Z)\setminus E_D[2]= \left\{(x,y)\in\Z^2: y\neq 0,\ y^2=x(x-AD)(x-BD)\right\}.\]
The following result is a refinement of 
Theorem~\ref{theorem:mainfullt}.

\begin{theorem}\label{theorem:fulltors}
Let $A,B\in\Z$ such that $0<A<B$ and consider the model \eqref{eq:model1}.
Then 
\[
\#\{D\in\D(N):E^*_D(\Z)\neq \varnothing\}
\ll 
\begin{cases}
N(\log N)^{-\frac{1}{4}+\epsilon}&\text{if }AB\notin\Q^2\text{ and }B(B-A)\notin\Q^2,\\
N(\log N)^{-\frac{1}{8}+\epsilon}&\text{if }AB\in\Q^2\text{ or }B(B-A)\in\Q^2,
\end{cases}\]
where the implied constant depends at most on $A,B$ and $\epsilon$.
\end{theorem}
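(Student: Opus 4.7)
The lower bound is supplied by Lemma~\ref{lemma:twistslb}; I focus on the upper bound. My plan is to perform a complete $2$-descent on $E_D$ and convert the resulting local solvability conditions into a multidimensional character sum in the form handled by Theorem~\ref{theorem:charsum}.

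For any $P=(x,y)\in E_D^*(\Z)$ with $y\neq 0$, the full rationality of $E_D[2]$ allows one to write
\[x=\alpha_1z_1^2,\qquad x-AD=\alpha_2z_2^2,\qquad x-BD=\alpha_3z_3^2,\]
with squarefree $\alpha_i\in\Z$ satisfying $\alpha_1\alpha_2\alpha_3\in\Q^{*2}$. A $p$-adic valuation analysis of the three pairwise differences
\[\alpha_iz_i^2-\alpha_jz_j^2=(e_j-e_i)D,\qquad (e_1,e_2,e_3)=(0,AD,BD),\]
at each prime $p\mid D$ coprime to $2AB(B-A)$ forces $p$ to divide exactly two of the $\alpha_i$. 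This induces a canonical factorisation
\[D=\pm D_0\cdot D_{12}D_{13}D_{23},\]
in which $D_0$ is supported on primes dividing $2AB(B-A)$ (so it is bounded in terms of $A,B$), the $D_{ij}\in\D$ are pairwise coprime, and $D_{ij}$ collects the primes dividing both $\alpha_i$ and $\alpha_j$.

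For each fixed sign and $D_0$, the simultaneous solvability of the three difference equations over $\Z$ imposes Hilbert symbol conditions at every place. Quadratic reciprocity then rewrites the indicator of valid triples $(D_{12},D_{13},D_{23})$ as a product
\[\prod_{\{i,j\}}f_{ij}(D_{ij})\chi_{ij}(D_{ij})\prod_{\{i,j\}\neq\{k,\ell\}}\leg{D_{ij}}{D_{k\ell}}^{\Phi(\{i,j\},\{k,\ell\})},\]
where the $f_{ij}$ are non-negative multiplicative functions determined by congruence conditions (of some density $\lambda$), each $\chi_{ij}$ is a Jacobi-type character pulled back from a number field $K_{ij}/\Q$ of bounded degree, and $\Phi$ is the linking function dictated by reciprocity. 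Summing this product over $(D_{12},D_{13},D_{23})$ with $\prod D_{ij}\leq N/|D_0|$ and then over the $O_{A,B}(1)$ sign/$D_0$ cases reduces the theorem to an application of Theorem~\ref{theorem:charsum} with $\cI=\{\{1,2\},\{1,3\},\{2,3\}\}$ and sets $\cJ_k$ enforcing the required non-triviality of the factorisation.

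The main obstacle, and the source of the dichotomy in the exponent, is the admissibility analysis driven by~\ref{prop:single} and~\ref{prop:double}. I expect that generically the linking function $\Phi$ makes any two indices in $\cI$ linked, so the maximal unlinked set has size $M=1$; whether a given singleton $\{(i,j)\}$ is admissible then reduces, via~\ref{prop:single}, to asking whether $\sqrt{\alpha_k}$ (with $k$ the index complementary to $\{i,j\}$) lies in $K_{ij}(\zeta_{c_{ij}})$. One checks that this translates into a rationality condition on certain products of root-differences of $x(x-AD)(x-BD)$, namely whether $AB\in\Q^{*2}$ or $B(B-A)\in\Q^{*2}$. When neither quantity is a square, no singleton is admissible, only the pure error term of Theorem~\ref{theorem:charsum} survives, and the resulting count yields the stronger bound $N(\log N)^{-\frac{1}{4}+\epsilon}$. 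When one of $AB$, $B(B-A)$ is a square, a single admissible singleton survives and produces an extra main term; estimating that term via an auxiliary application of Siegel--Walfisz to the twisted sum $\sum_D f_{ij}(D)\chi_{ij}(D)$ degrades the bound to $N(\log N)^{-\frac{1}{8}+\epsilon}$. The bulk of the technical effort will lie in the reciprocity bookkeeping needed to identify the correct characters $\chi_{ij}$ and the linking matrix $\Phi$, and in the precise matching of the squareness hypotheses on $AB$ and $B(B-A)$ with the admissibility criteria of Theorem~\ref{theorem:charsum}.
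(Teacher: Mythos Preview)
Your proposal has a structural gap. The claim that every prime $p\mid D$ coprime to $2AB(B-A)$ divides exactly two of the $\alpha_i$ is false: if $p\nmid x$ then $v_p(x)=v_p(x-AD)=v_p(x-BD)=0$, so such a $p$ divides \emph{none} of the $\alpha_i$. Hence your factorisation $D=\pm D_0D_{12}D_{13}D_{23}$ omits an entire factor $\tilde D=D/\gcd(x,D)$, supported on the primes of $D$ not dividing $x$, and this factor can be as large as $|D|$. Your three-variable character sum therefore does not range over all of $D$; a further red flag is that with $\#\cI=3$ and your claimed $M=1$, Theorem~\ref{theorem:charsum} would output an error $N(\log N)^{-1+\epsilon}$, substantially stronger than what the theorem actually asserts.

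The paper resolves this by splitting on the size of $g=\gcd(x,D)$. When $g\geq N(\log N)^{-\kappa}$ the missing factor satisfies $\tilde D\leq(\log N)^\kappa$ and is summed trivially, while the character sum runs over factorisations of $g$; after expanding the local-solvability indicator \eqref{eq:indictortriple} this produces a sum over \emph{twelve} variables with $\lambda=\tfrac14$ and maximal unlinked sets of size $M=4$, whence the exponent $(M-1)\lambda-1=-\tfrac14$. The exceptional admissible configurations there (see \eqref{eq:exclusions}) are what trigger the squareness conditions on $AB$ and, after the substitution $A\mapsto B-A$ needed to treat negative $D$, on $B(B-A)$; when $AB\in\Q^2$ the leftover term is handled by a \emph{second} application of Theorem~\ref{theorem:charsum} with $32$ indices, $\lambda=\tfrac18$, $M=8$ (Lemma~\ref{lemma:cyclic4}), and this is where the exponent $-\tfrac18$ originates. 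When $g<N(\log N)^{-\kappa}$ the descent input is useless and an entirely different argument---Mordell's quartic-form construction together with the cubic-surface bound of Lemma~\ref{lemma:smallgcd}---is required; your proposal omits this regime altogether.
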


Define 
\[\D^+\coloneqq \{D>0:D\in\Z\text{ square-free}\}\]
and 
\[\D^+(N)\coloneqq \{D\in\D^+:D \leq N\}.
\]
We may assume that $\gcd(A,B)=1$, since otherwise we can consider the larger family where $A$ and $B$ are replaced by $A/\gcd(A,B)$ and $B/\gcd(A,B)$. This assumption also implies that $A,B,B-A$ are pairwise coprime.
The main result of this section is the following proposition.

\begin{prop}\label{prop:fulltwolarge}
Let $A,B\in\Z$ such that $0<A<B$ and $\gcd(A,B)=1$, and consider the model \eqref{eq:model1}.
Fix $\kappa>0$. Then
\[\#\left\{D\in\D^+(N):
\begin{array}{l}
\gcd(x,D)\geq N(\log N)^{-\kappa}\\
\text{for some }(x,y)\in E_D^*(\Z)\end{array}\right\}
\ll
\begin{cases}
N(\log N)^{-\frac{1}{4}+\epsilon}&\text{if }AB\notin\Q^2\\
N(\log N)^{-\frac{1}{8}+\epsilon}&\text{if }AB\in\Q^2,
\end{cases}\]
where the implied constant depend at most on $A,B$, $\kappa$ and $\epsilon$.
\end{prop}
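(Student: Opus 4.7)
The plan is to transform the counting problem via a complete $2$-descent into a character sum to which Theorem~\ref{theorem:charsum} can be applied. Given $(x,y)\in E_D^*(\Z)$ with $g\coloneqq \gcd(x,D)\geq N(\log N)^{-\kappa}$, I write $x=gu$ and $D=gd$, noting that $\gcd(u,d)=\gcd(g,d)=1$ because $D$ is square-free. The equation $y^2=x(x-AD)(x-BD)$ then becomes
\[
gz^2=u(u-Ad)(u-Bd), \qquad y=g^2z,
\]
and the three right-hand factors are pairwise coprime away from the fixed set of primes dividing $2AB(B-A)$. This produces, up to controlling the finite set of bad primes, positive square-free integers $s_1,s_2,s_3$ with $s_1s_2s_3=g$, integers $t_1,t_2,t_3$, and signs $\epsilon_2,\epsilon_3\in\{\pm1\}$ (determined by the ordering of $u,Ad,Bd$), such that
\[
u=s_1t_1^2,\qquad u-Ad=\epsilon_2 s_2t_2^2,\qquad u-Bd=\epsilon_3 s_3t_3^2.
\]

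Next, I would translate the local solubility of the resulting ternary quadratic system into Jacobi symbol conditions. For every prime $p\nmid 2AB(B-A)$ dividing exactly one of the $s_i$, say $s_1$, reducing the descent equations modulo $p$ forces conditions of the form $\leg{-\epsilon_2 A s_2 d}{p}=1$, with analogous conditions for primes dividing $s_2$ or $s_3$. Expanding these via the prime factorisations of the $s_i$ and symmetrising through quadratic reciprocity reorganises the count of admissible $D$ into a sum
\[
S(N)=\sum_{\substack{(s_1,s_2,s_3,d)\\ s_1s_2s_3d\asymp N}}\prod_{i=1}^{3}f_i(s_i)\chi_i(s_i)\cdot f_4(d)\cdot \prod_{i,j}\leg{s_i}{s_j}^{\Phi(i,j)},
\]
with an explicit linking function $\Phi$ on $\cI=\{1,2,3,4\}$, multiplicative local densities $f_i$ of the type required by Theorem~\ref{theorem:charsum}, and genus characters $\chi_i$ attached to appropriate Galois extensions $K_i$ of $\Q$ (capturing solubility at the bad primes). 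Dyadic dissection of the variables $s_1,s_2,s_3,d$, together with the constraint $s_1s_2s_3\geq N(\log N)^{-\kappa}$ that forces at least one $s_i$ to be genuinely large, permits the application of Theorem~\ref{theorem:charsum} in each dyadic box.

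The final step is to identify the maximal admissible unlinked subset $\cU\subseteq \cI$ in the two cases. In the generic situation $AB\notin\Q^2$, I expect $\#\cU\leq 2$ to follow by direct verification of (P4): no pair $(i,j)$ makes $\sqrt{\alpha_i\alpha_j}$ descend into $K_i(\zeta_{c_i})\cdot K_j(\zeta_{c_j})$ beyond the trivial instances. When $AB\in\Q^2$, however, the rationality of $\sqrt{AB}$ creates one additional pair satisfying (P4), increasing $\#\cU$ by one. Since the Mertens-type constant $\lambda$ attached to each $f_i$ is $\tfrac12$ (exactly half of the residue classes at $p$ satisfy the solubility condition), the main term of Theorem~\ref{theorem:charsum} contributes $N(\log N)^{(M-1)\lambda -1+\epsilon}$ per dyadic box, and summing over the $O((\log\log N)^{O(1)})$ boxes produces the bounds $(\log N)^{-\frac14+\epsilon}$ and $(\log N)^{-\frac18+\epsilon}$ respectively.

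The main obstacle I foresee is the careful verification of the admissibility conditions (P3) and (P4) of Theorem~\ref{theorem:charsum}: pinning down exactly when $\sqrt{\alpha_i\alpha_j}$ lies in the relevant cyclotomic composite, which is the precise mechanism distinguishing $AB\in\Q^2$ from $AB\notin\Q^2$ and accounting for the doubling of the $\log N$-exponent. A secondary, more bookkeeping-level difficulty is handling the signs $(\epsilon_2,\epsilon_3)$ together with the bad-prime data at $2AB(B-A)$ inside the multiplicative structure required by Theorem~\ref{theorem:charsum}, without losing the gain from cancellation.
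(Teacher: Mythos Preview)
Your overall strategy---descent to a triple $(s_1,s_2,s_3)$, extraction of Legendre-symbol conditions, and an appeal to Theorem~\ref{theorem:charsum}---matches the paper. However, the specific numerics and the treatment of the $AB\in\Q^2$ case are both off in ways that matter.

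First, the application of Theorem~\ref{theorem:charsum} requires expanding the indicator
\[
\prod_{p\mid s_i}\tfrac12\Bigl(1+\leg{*}{p}\Bigr)\cdot\tfrac12\Bigl(1+\leg{*}{p}\Bigr)
\]
(two Legendre conditions per prime of each $s_i$) into a sum over $4^{\omega(s_i)}$ terms, i.e.\ each $s_i$ splits into \emph{four} factors $s_{i0},s_{ij},s_{ik},s_{i4}$. This yields a $12$-element index set with $f_\u(n)=4^{-\omega(n)}$, so $\lambda=\tfrac14$, and one checks directly that the maximal unlinked sets have size $M=4$; hence the error term is $N(\log N)^{(4-1)\cdot\frac14-1+\epsilon}=N(\log N)^{-\frac14+\epsilon}$. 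Your four-variable setup with $\lambda=\tfrac12$ cannot produce $-\tfrac14$ from $(M-1)\lambda-1$ for any integer $M$, and in any case the sum is not in the shape Theorem~\ref{theorem:charsum} accepts until the indicators are expanded. (The variable $d$ is not part of $\cI$ at all: it satisfies $d\le(\log N)^\kappa$ and is summed trivially at the end.)

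Second, and more seriously, the $-\tfrac18$ exponent when $AB\in\Q^2$ does \emph{not} arise from one extra admissible pair in the same character sum. In the $12$-variable sum, the main term of Theorem~\ref{theorem:charsum} vanishes \emph{except} over an explicit exceptional locus of points with $xD\in\Q^2$ (this is where the admissibility check (P3)/(P4) actually fails). Those exceptional points must be handled separately: writing $x=Du^2$ yields $(u^2-a^2)(u^2-b^2)=Dt^2$, and a \emph{second} descent on the four linear factors $u\pm a,\,u\pm b$ produces a new character sum with a $32$-element index set, $\lambda=\tfrac18$, and $M=8$, whence $(8-1)\cdot\tfrac18-1=-\tfrac18$. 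A residual sub-exceptional set (two of the four descent factors trivial) is then dispatched by a bound for simultaneous Pell equations. You also need a short separate argument for points on the compact real component ($x<BD$, some $G_i<0$), which contributes only $O((\log N)^{2\kappa})$.
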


Suppose $(x,y)\in E_D^*(\Z)$, so that $y\neq 0$.
Let $g\coloneqq\gcd(x,D)$ and 
write $x=g\tilde{x}$ and $D=g\tilde{D}$. 
Then from the equation for $E_D$, we see that $g^3\mid y^2$. Since $D$ is square-free, $g$ must also be square-free, so $g^2\mid y$ and we write $y=g^2\tilde{y}$. We may now rewrite the equation as 
\begin{equation}\label{eq:factorgcd2tors}
g\tilde{y}^2=\tilde{x}(\tilde{x}-A\tilde{D})(\tilde{x}-B\tilde{D}).
\end{equation}
Here $\gcd(\tilde{x},\tilde{D})=1$, so 
\begin{align*}
 \gcd(\tilde{x},\tilde{x}-A\tilde{D})&\mid A\\
\gcd(\tilde{x},\tilde{x}-B\tilde{D})&\mid B\\
\gcd(\tilde{x}-A\tilde{D},\tilde{x}-B\tilde{D})& \mid B-A.
\end{align*}
We proceed by rewriting the factors appearing on the right hand side of~\eqref{eq:factorgcd2tors} as
\begin{equation}\label{eq:factorrhs}
\begin{split} \tilde{x}&=G_1y_1^2,\\
\tilde{x}-A\tilde{D}&=G_2y_2^2,\\
\tilde{x}-B\tilde{D}&=G_3y_3^2,
\end{split}
\end{equation}
where $G_1,G_2,G_3$ are square-free integers and $y_1,y_2,y_3$ are non-zero integers, such that $G_1G_2G_3=g(\delta_{1}\delta_{2}\delta_{3})^2$ with
\[ 
\delta_{1}\coloneqq\gcd(G_2,G_3)\mid B-A,\qquad 
\delta_{2}\coloneqq\gcd(G_3,G_1)\mid B,\qquad
 \delta_{3}\coloneqq\gcd(G_1,G_2)\mid A.
\]
Since $A,B,B-A$ are pairwise coprime, $\delta_{1},\delta_{2},\delta_{3}$ are also pairwise coprime.
If $p\mid \gcd(\delta_{i},\tilde{D})$, then $p\mid \tilde{x}$ by \eqref{eq:factorrhs}. But $\gcd(\tilde{x},\tilde{D})=1$, and so it must be that $\gcd(\delta_{1}\delta_2\delta_3,\tilde{D})=1$.
This observation, together with $\gcd(g,\tilde{D})=1$, implies that $\gcd(G_1G_2G_3,\tilde{D})=1$.
Taking the difference between the equations in~\eqref{eq:factorrhs}, the system can be rewritten as \begin{equation}\label{eq:conic3}
 \begin{split}
 G_1y_1^2-G_2y_2^2&=A\tilde{D}\\
G_1y_1^2-G_3y_3^2&=B\tilde{D}\\
G_2y_2^2-G_3y_3^2&=(B-A)\tilde{D}.
\end{split}
\end{equation}

We have seen that $(x,y)\in E_D^*(\Z)$ gives a system of the form \eqref{eq:conic3}. For given distinct $i$ and $j$, consider the map
\begin{equation}\label{eq:run}
\left\{(x,D):D\in\D,\ (x,y)\in E_D(\Z)\text{ for some }y\neq 0\right\}\rightarrow \Z_{\neq 0}\times\Z_{\neq 0},
\end{equation}
given by $(x,D)\mapsto (G_iy_i^2,G_jy_j^2)$.
Given $(G_iy_i^2,G_jy_j^2)$, we can recover 
the value of $\tilde{D}$ from  the equations in \eqref{eq:conic3}, and then the value of $G_ky_k^2$, where $k\not\in\{ i,j\}$. Since $y_k\neq 0$ and $G_k$ is square-free, this is enough to recover $G_k$.
Similarly, we can recover $G_i$ and $G_j$. We also get $g$ from the square-free part of $G_1G_2G_3$. Finally we obtain a pair $(x,D)$ through the identities  $x=g\tilde{x}=gG_1y_1^2$ and $D=g\tilde{D}$.

The $G_i$ we have constructed are square-free but not necessarily all positive. We now show the case with any negative $G_i$ has negligible contribution.
Over $\R$, the curve $E_D$ has two connected components, we bound the number of integral points that lie in the compact component.
\begin{lemma}\label{lemma:negativepoints}
Suppose $B>A$ are coprime positive integers.
Fix $\kappa>0$.
\[\sum_{D\in\D^+(N)}\#\left\{(x,y)\in E_D^*(\Z):x<BD\text{ and }\gcd(x,D)\geq \frac{N}{(\log N)^{\kappa}}\right\}\ll (\log N)^{2\kappa},\]
where the implied constant depends at most on $A,B$.\end{lemma}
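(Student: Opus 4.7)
The plan is to exploit the two real components of $E_D(\mathbb{R})$. Since the polynomial $x(x-AD)(x-BD)$ has roots $0<AD<BD$ and is non-negative only on $[0,AD]\cup [BD,\infty)$, the hypothesis $x<BD$ together with $(x,y)\in E_D^*(\mathbb{Z})$ (so $y\neq 0$) forces $x$ to lie in the bounded real component, i.e.\ $0<x<AD$. This is the crucial geometric input: it turns a bound on $\gcd(x,D)$ into an essentially sharp bound on $x$ itself.

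Next I would write $g=\gcd(x,D)$, $x=gx'$, $D=gD'$ with $\gcd(x',D')=1$. Combining $g\geq N(\log N)^{-\kappa}$ with $D\leq N$ yields the smallness constraint $D'\leq (\log N)^{\kappa}$, and the bound on $x$ becomes $0<x'<AD'$. Substituting into the equation for $E_D$ gives
\[
y^2 = g^3\, x'(AD'-x')(BD'-x').
\]
Since $D\in\mathcal{D}^+$ is square-free and $g\mid D$, the integer $g$ is square-free, so $g^3\mid y^2$ forces $g^2\mid y$. Writing $y=g^2y'$ rearranges the equation to
\[
g\, y'^2 = x'(AD'-x')(BD'-x').
\]

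The right-hand side is a \emph{positive integer depending only on} $(x',D')$, so $g$ is uniquely determined as its square-free part, and then $y'$ is determined up to sign. Hence for each admissible pair $(x',D')$ at most two triples $(x,y,D)$ can arise, and I would conclude by summing
\[
\sum_{D'\leq (\log N)^{\kappa}} \#\{x':0<x'<AD'\} \;\leq\; A\sum_{D'\leq(\log N)^{\kappa}} D' \;\ll_{A,B}\; (\log N)^{2\kappa},
\]
as required. There is no real obstacle in this argument: the compactness of the egg component turns a largeness hypothesis on $\gcd(x,D)$ into a smallness hypothesis on the reduced variables, and square-freeness of $D$ collapses all remaining freedom.
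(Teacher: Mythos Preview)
Your argument is correct and follows essentially the same line as the paper's: both use that $x<BD$ forces the point onto the bounded real component, so that the reduced coordinates $\tilde x=x'$ and $\tilde D=D'$ satisfy $\tilde x\ll \tilde D\leq (\log N)^{\kappa}$, and then recover $g$ (hence $x,D$) from the square-free part of the resulting product. The only cosmetic difference is that the paper counts pairs $(G_1y_1^2,G_3y_3^2)=(\tilde x,\tilde x-B\tilde D)$ via the earlier system~\eqref{eq:conic3} and the map~\eqref{eq:run}, whereas you count pairs $(x',D')$ directly and read off $g$ as the square-free part of $x'(AD'-x')(BD'-x')$; your packaging is arguably cleaner since it does not rely on the preceding $G_i$-decomposition.
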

\begin{proof}
If $(x,y)\in E_D^*(\Z)$ is such that $x<BD$, then we have a solution $(y_1,y_2,y_3)$ to~\eqref{eq:conic3} with $G_2,G_3<0$ and $G_1>0$. The condition $\gcd(x,D)\geq \frac{N}{(\log N)^{\kappa}}$ implies that $\tilde{D}\leq (\log N)^{\kappa}$. Looking at the second equation $G_1y_1^2-G_3y_3^2=B\tilde{D}$, the terms on the left are both positive, so $G_1y_1^2, G_3y_3^2\ll (\log N)^{\kappa}$. This gives $\ll(\log N)^{2\kappa}$ possible $(G_1y_1^2,G_3y_3^2)$. Finally we use the fact that the map \eqref{eq:run} has $O(1)$ fibres above 
$(G_1y_1^2,G_3y_3^2)$, giving $O(1)$ choices for $y,x,D$ overall.
\end{proof}

Lemma~\ref{lemma:negativepoints} allows us to restrict to the points with $G_1,G_2,G_3>0$ in the rest of our argument.

\subsection{$2$-Selmer elements from integral points with large $\gcd(x,D)$}
We will use Theorem~\ref{theorem:charsum} to prove the following lemma.

\begin{lemma}\label{lemma:positivepoints}
Suppose $B>A$ are coprime positive integers.
Fix $\kappa>0$. Then
\[\#\left\{D\in\D^+(N):
\begin{array}{l}x\geq BD\text{ and }\\
\gcd(x,D)\geq N(\log N)^{-\kappa}\\\text{for some }(x,y)\in E_D^*(\Z)\end{array}\right\}\ll 
\begin{cases}
N(\log N)^{-\frac{1}{4}+\epsilon}&\text{if }AB\notin\Q^2\\
N(\log N)^{-\frac{1}{8}+\epsilon}&\text{if }AB\in\Q^2,
\end{cases}\]
where the implied constant depend at most on $A,B$, $\kappa$ and $\epsilon$.
\end{lemma}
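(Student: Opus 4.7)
The plan is to exploit the Selmer-like factorisation already extracted before the statement: starting from a point $(x,y)\in E_D^*(\Z)$ with $x\geq BD$ and $g=\gcd(x,D)\geq N(\log N)^{-\kappa}$, the three factors $\tilde x$, $\tilde x - A\tilde D$, $\tilde x - B\tilde D$ are all non-negative, and writing $G_1,G_2,G_3$ for their square-free parts together with the common divisors $\delta_1\mid B-A$, $\delta_2\mid B$, $\delta_3\mid A$ gives $G_i=\delta_j\delta_k D_i$ with pairwise coprime square-free $D_1,D_2,D_3>0$, each coprime to $2AB(B-A)\tilde D\,\delta_1\delta_2\delta_3$. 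Since $\tilde D=D/g\leq (\log N)^\kappa$ and the $\delta_i$ are bounded in terms of $A,B$, there are only $O_{A,B}((\log N)^\kappa)$ choices of the parameters $(\tilde D,\delta_1,\delta_2,\delta_3)$, and it suffices to bound the count of $(D_1,D_2,D_3)$ for each fixed parameter.

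For each large prime $p\mid D_1D_2D_3$, local solubility of the system \eqref{eq:conic3} gives exactly two Legendre symbol conditions: at $p\mid D_1$, reducing the first two equations of \eqref{eq:conic3} modulo $p$ forces both $\bigl(\tfrac{-A\tilde D\delta_1\delta_3 D_2}{p}\bigr)=1$ and $\bigl(\tfrac{-B\tilde D\delta_1\delta_2 D_3}{p}\bigr)=1$, with analogous pairs at $p\mid D_2$ and $p\mid D_3$. I would therefore encode the indicator of joint local solubility as the product $\prod_{p\mid D_1}\tfrac14(1+\chi_1^{(1)})(1+\chi_2^{(1)})$ times analogous factors for $p\mid D_2$ and $p\mid D_3$. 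Expanding each local factor into the four terms $\{1,\chi_1,\chi_2,\chi_1\chi_2\}$ splits each $D_i$ into four pairwise coprime parts $V_i^{(1)},\ldots,V_i^{(4)}$ with $f(p)=1/4$ at every prime, producing a $12$-variable setup suited to Theorem~\ref{theorem:charsum}, with explicit characters $\alpha_i$ and linking matrix $\Phi$.

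A direct computation of $\Phi$ shows that the maximum size $M$ of an unlinked subset is $4$: when the projection of $\cU$ to each of the three groups of variables is non-empty, the pairwise matching of the coordinates specifying which other group each character involves forces each projection to be a singleton and $|\cU|\leq 3$; allowing one projection to be empty then admits either all four vertices from a single group, or two vertices from one group together with two from another sharing the common value of the mutual coordinate, so that $|\cU|\leq 4$. Substituting $M=4$ and $\lambda=1/4$ into Theorem~\ref{theorem:charsum} produces an error of $N(\log N)^{(M-1)\lambda-1+\epsilon}=N(\log N)^{-1/4+\epsilon}$, matching the target when $AB\notin\Q^2$.

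The decisive step is to control the main term coming from the admissible unlinked subsets of size four. Among the nine candidate maximal unlinked subsets, property~(P4) of Theorem~\ref{theorem:charsum} demands $\sqrt{\alpha_i\alpha_j}\in K_i(\zeta_{c_i})K_j(\zeta_{c_j})$ for every pair $\{i,j\}\subset\cU$. Cancelling the squares arising from $\tilde D^2$ and $\delta_i^2$ reduces the critical pair in each candidate to a bilinear squareness condition on $A,B$ and $B-A$; a case-by-case check should confirm that all candidates become inadmissible precisely when both $AB\notin\Q^2$ and $B(B-A)\notin\Q^2$, yielding the first bound. In the remaining case $AB\in\Q^2$ (or, symmetrically, $B(B-A)\in\Q^2$) some admissible $\cU$ persists, and the associated sum is itself a character sum in four variables that I would estimate by a second application of Theorem~\ref{theorem:charsum} to the reduced problem: the surviving non-trivial $\alpha_i$ lifts the effective $\lambda$ below $1/4$, producing an additional $(\log N)^{-1/8}$ saving and hence the bound $N(\log N)^{-1/8+\epsilon}$. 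The hardest part will be the bookkeeping for admissibility, in particular verifying that Properties~(P3) and~(P4) collapse cleanly across all nine candidates to the two squareness dichotomies on $A,B,B-A$, and controlling the residual main term in the borderline case without losing more than a single additional factor of $(\log N)^{-1/8}$.
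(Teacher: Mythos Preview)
Your overall plan---the $12$-variable application of Theorem~\ref{theorem:charsum} with $M=4$ and $\lambda=\tfrac14$---is exactly what the paper does (Lemma~\ref{lemma:genericgcd}), and your combinatorial analysis of the maximal unlinked sets is correct. The gaps lie entirely in how you dispose of the surviving admissible $\cU$'s.

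First, your claim that ``all candidates become inadmissible precisely when both $AB\notin\Q^2$ and $B(B-A)\notin\Q^2$'' is false. The set $\cU=\{30,31,32,34\}$ (all four pieces of $n_3$) has every $\Phi$-entry zero, so (P3) only asks that $R_{31}=(B-A)\tilde D\gamma_2$ and $R_{32}=B\tilde D\gamma_1$ be squares. This forces $\tilde D=1$ and pins down $\gamma_1,\gamma_2$, but imposes \emph{no} condition on $AB$ or $B(B-A)$. The paper excludes this configuration via $\cJ$ and then handles it separately (Lemma~\ref{lemma:Lsquare}): the point satisfies $xBD\in\Q^2$ and $(x-AD)(B-A)D\in\Q^2$, which turns \eqref{eq:conic3} into a pair of simultaneous Pell equations, and Lemma~\ref{lemma:simPell} bounds the number of solutions by $O(\sqrt{N}(\log N)^2)$. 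Your proposal does not mention this Diophantine input.

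Second, and more seriously, your plan for the case $AB\in\Q^2$ cannot work as stated. When $\cU=\{i0,j0,ij,ji\}$ (or $\{ik,i4,jk,j4\}$) is admissible, the remaining main term is
\[
\sum_{(D_u)_{u\in\cU}}\prod_{u\in\cU}\frac{1}{4^{\omega(D_u)}}\,\chi_u(D_u)\prod_{u,v\in\cU}\leg{D_u}{D_v}^{\Phi(u,v)}.
\]
Within an unlinked $\cU$ the Jacobi symbols collapse (via quadratic reciprocity) to characters of modulus $4$, and admissibility means precisely that the $\chi_u$ are already trivial modulo $c_u$; the sum is therefore $\asymp \sum_{n\leq N/\tilde D}4^{-\omega(n)}\cdot 4^{\omega(n)}\asymp N/\tilde D$, with no oscillation left to exploit. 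A second run of Theorem~\ref{theorem:charsum} on these four variables cannot manufacture any saving, let alone $(\log N)^{-1/8}$.

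The paper's route out is structural. Since $\gcd(A,B)=1$, $AB\in\Q^2$ forces $A=a^2$, $B=b^2$; the surviving exceptional points all have $xD\in\Q^2$, so $x=Du^2$, and substituting gives $(u-a)(u+a)(u-b)(u+b)=Dt^2$. Now there are \emph{four} linear factors, hence three Legendre conditions at each prime and $\lambda=\tfrac18$. A fresh application of Theorem~\ref{theorem:charsum} to this $32$-variable problem with $M=8$ yields the error $N(\log N)^{-1/8+\epsilon}$, and the remaining admissible sets there are again killed by a simultaneous Pell argument (Lemma~\ref{lemma:cyclic4}). That passage from three quadratic factors to four linear ones is the missing idea in your proposal.
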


We
collect from~\eqref{eq:conic3} the local solvability conditions at the primes $p$ dividing $g$, but not $AB(B-A)\tilde{D}$. Note that such primes are necessarily odd since $AB(B-A)$ is even.  These conditions may be written 
\[
\begin{cases}
\leg{-A\tilde{D}G_2}{p}=\leg{-B\tilde{D}G_3}{p}=1&\text{if }p\mid G_1,\\
\leg{A\tilde{D}G_1}{p}=\leg{-(B-A)\tilde{D}G_3}{p}=1&\text{if }p\mid G_2,\\
\leg{B\tilde{D}G_1}{p}=\leg{(B-A)\tilde{D}G_2}{p}=1&\text{if }p\mid G_3.
\end{cases}
\]
For each $i$, define $\gamma_i=\gcd(AB(B-A),G_i)$ and $n_i$ such that
$$
G_i=n_i \gamma_i. 
$$
Thus  $\gcd(n_i,AB(B-A))=1$ and $n_i$ is square-free, since $G_i$ is square-free. 
Define
 \begin{align*}
 R_{13}&=-A\tilde{D}\cdot \gamma_2,
 &R_{12}&=-B\tilde{D}\cdot \gamma_3,
 &R_{21}&=-(B-A)\tilde{D}\cdot \gamma_3,\\
 R_{23}&=A\tilde{D}\cdot \gamma_1,
 &R_{32}&=B\tilde{D}\cdot \gamma_1,
 &R_{31}&=(B-A)\tilde{D}\cdot \gamma_2,
 \end{align*}
and 
$$
 R_{i0}=1, \quad  R_{i4}= \prod_{k\in\{1,2,3\}\setminus\{i\}}R_{ik},
$$
for $i\in\{1,2,3\}$.

We may now rewrite the local conditions at the primes dividing $n_1n_2n_3$ as
\begin{equation}\label{eq:forcharsum}
\begin{cases}
 \leg{R_{13}n_2}{p}=\leg{R_{12}n_3}{p}=1&\text{if }p\mid n_1,\\
 \leg{R_{23}n_1}{p}=\leg{R_{21}n_3}{p}=1&\text{if }p\mid n_2,\\
 \leg{R_{32}n_1}{p}=\leg{R_{31}n_2}{p}=1&\text{if }p\mid n_3.\\
\end{cases}
\end{equation}
Then, given $\gamma_1,\gamma_2,\gamma_3,\tilde{D}$,~the latter conditions are satisfied by $(n_1,n_2,n_3)$ if and only if the expression
\begin{multline} \label{eq:indictortriple}
\frac{1}{4^{\omega(n_1n_2n_3)}}\prod_{p\mid n_1}\left(1+\leg{R_{13}n_2}{p}\right)\left(1+\leg{R_{12}n_3}{p}\right)
\\
\times \prod_{p\mid n_2}\left(1+\leg{R_{23}n_1}{p}\right)
\left(1+\leg{R_{21}n_3}{p}\right)\prod_{p\mid n_3}\left(1+\leg{R_{32}n_1}{p}\right)\left(1+\leg{R_{31}n_2}{p}\right)
\end{multline}
is equal to $1$.
We can expand the first product as
\[ \prod_{p\mid n_1}\left(1+\leg{R_{13}n_2}{p}\right)\left(1+\leg{R_{12}n_3}{p}\right)
=
\sum_{n_1=n_{10}n_{12}n_{13}n_{14}}
\leg{R_{13}n_2}{n_{13}n_{14}}
\leg{R_{12}n_3}{n_{12}n_{14}}.
\]
Similarly we expand the other products with indexing
\[n_1=n_{10}n_{12}n_{13}n_{14},\quad n_2=n_{20}n_{21}n_{23}n_{24},\quad 
n_3=n_{30}n_{31}n_{32}n_{34}.\]
We will sum~\eqref{eq:indictortriple} over positive square free integers $n_1,n_2,n_3$ coprime to $AB(B-A)\tilde{D}$, with the exceptions
\begin{equation} \label{eq:exclusions}
\begin{cases}
n_1n_2=1&\text{if }\tilde{D}=1,\ 
AB\text{ not square},\ R_{32}\text{ and }R_{31}\text{ are squares,}\\
\hfil n_1=1&\text{if }\tilde{D}=1,\ AB,R_{32}\text{ are squares}.\end{cases}
\end{equation}

\begin{lemma}\label{lemma:genericgcd}
Let $\kappa>0$. Fix positive integers $\gamma_1,\gamma_2,\gamma_3\mid AB(B-A)$ and take a square-free positive integer $\tilde{D} \leq(\log N)^{\kappa}$ that is coprime to $\gamma_1\gamma_2\gamma_3$. Let $\tau$ be the square-free part of $\gamma_1\gamma_2\gamma_3\tilde{D}$.
We have
\[
\#\left\{D\in\D^+(N):\begin{array}{l}
 n_1,n_2,n_3\geq 1,\ n_1n_2n_3\tau=D\\
\eqref{eq:forcharsum}\text{ holds, but }
\eqref{eq:exclusions} \text{ is not satisfied}\\
 \gcd(n_1n_2n_3,AB(B-A))=1
\end{array}\right\}
\ll \frac{N}{\tilde{D}}(\log N)^{-\frac{1}{4}+\epsilon},
\]
where the implied constant depends at most on $A,B$, $\kappa$ and $\epsilon$.
\end{lemma}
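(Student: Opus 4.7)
The plan is to transform the count into a multivariable character sum by expanding the indicator function \eqref{eq:indictortriple} and then to invoke Theorem~\ref{theorem:charsum}. First I would bound the left-hand side by a sum of \eqref{eq:indictortriple} over positive pairwise-coprime square-free integers $n_1,n_2,n_3$ with $n_1n_2n_3\leq N/\tau$, coprime to $AB(B-A)\tilde D$, and avoiding the tuples in \eqref{eq:exclusions}. Next, I would expand each of the three products in \eqref{eq:indictortriple} by decomposing $n_i=n_{i0}n_{ij}n_{ik}n_{i4}$ according to the indexing preceding the statement, producing a character sum over $12$ variables $n_{i\ell}$. The Jacobi factors split into variable-constant parts $\leg{R_\ast}{n_{i\ell}}$, which I would fold into characters $\chi_{i\ell}$ of bounded conductor depending on $\tilde D$ and $\gamma_1\gamma_2\gamma_3$, and variable-variable parts $\leg{n_{i'\ell'}}{n_{i\ell}}$ which supply the form $\Phi$ of Theorem~\ref{theorem:charsum}. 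To absorb the $4^{-\omega(n_1n_2n_3)}$ normalisation, I would take $f_{i\ell}$ multiplicative with $f_{i\ell}(p)=1/4$ at admissible primes, so that $\lambda=1/4$ throughout.

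The combinatorial heart of the argument is to determine the maximum size $M$ of an unlinked set $\cU\subseteq\mathcal{I}$ under this $\Phi$. A direct inspection shows that $\Phi((i,\ell),(i',\ell'))=1$ precisely when $i\neq i'$ and $i'$ appears among the subscripts encoded by $\ell$, while intra-column pairs $(i,\ell),(i,\ell')$ and the ``null'' indices $(i,0)$ contribute no symbols. By exploiting these features together with quadratic reciprocity, one expects to show that $M=4$, matching the exponent $(M-1)\lambda-1=-1/4$ sought. The unlinked sets of size $M$ that also satisfy the admissibility conditions~\ref{prop:single} and~\ref{prop:double} fall into two combinatorial classes: those supported in a single column $i$, and those supported across two columns in a way whose pairwise reciprocity symbols cancel. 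Unpacked via the concrete characters $\chi_{i\ell}$, these algebraic conditions require that certain products of $R_{ik}$ and $AB$ become squares modulo the cyclotomic conductors $c_{i\ell}$, which are precisely the hypotheses featured in \eqref{eq:exclusions}.

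Having identified this, I would apply Theorem~\ref{theorem:charsum} with two subsets $\cJ_1,\cJ_2\subseteq\mathcal{I}$ chosen so that the constraints $\prod_{(i,\ell)\in\mathcal{I}\setminus\cJ_k}n_{i\ell}\neq 1$ encode precisely the two cases in \eqref{eq:exclusions}. Since the admissible unlinked sets of size $M$ are eliminated by these constraints, the main term in Theorem~\ref{theorem:charsum} vanishes and only the error $N(\log N)^{(M-1)\lambda-1+\epsilon}=N(\log N)^{-1/4+\epsilon}$ remains. The factor $1/\tilde D$ emerges because the sum actually runs over $n_1n_2n_3\leq N/(\tau\tilde D)$, which effectively replaces $N$ by $N/\tilde D$ throughout the estimate. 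Summing over the $O(1)$ many choices of $(\gamma_1,\gamma_2,\gamma_3)$ with $\gamma_i\mid AB(B-A)$ contributes only an implied constant.

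The main obstacle is the combinatorial catalogue of unlinked sets in the $12$-vertex $\Phi$-graph: one must carefully verify that $M=4$ rather than a larger value, and that each admissible unlinked set of size $M$ corresponds to exactly one of the two conditions in \eqref{eq:exclusions}. A secondary technical issue is checking that the characters $\chi_{i\ell}$ and the fields $K_{i\ell}(\sqrt{\alpha_{i\ell}})$ satisfy the hypotheses of Theorem~\ref{theorem:charsum} with discriminants at most polylogarithmic in $N$, uniformly for $\tilde D\leq(\log N)^\kappa$ and $\gamma_i\mid AB(B-A)$, which amounts to tracking how $\tilde D$, the $\gamma_i$, and the $R_{ik}$ enter the conductor.
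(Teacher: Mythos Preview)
Your approach is essentially the paper's: expand the indicator \eqref{eq:indictortriple} into a $12$-variable character sum and apply Theorem~\ref{theorem:charsum} with $\lambda=\tfrac14$ and $M=4$, then verify that every admissible maximal unlinked set is ruled out by \eqref{eq:exclusions}. A few minor slips to fix: the sum ranges over $n_1n_2n_3\leq N/\tau$ (not $N/(\tau\tilde D)$, since $\tilde D$ is already a factor of $\tau$); the two cases in \eqref{eq:exclusions} are mutually exclusive in whether $AB$ is a square, so the paper uses a \emph{single} $\cJ$ chosen according to which case holds rather than two simultaneous $\cJ_k$; and there are three (not two) families of maximal unlinked sets --- $\{i0,ij,ik,i4\}$, $\{i0,j0,ij,ji\}$, $\{ik,i4,jk,j4\}$ --- which the paper imports from \cite[Lemma~9]{HBSelmer1}, with the third family handled via~\ref{prop:double} rather than~\ref{prop:single}.
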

\begin{proof}
Let $R(N)$ denote the quantity that is to be estimated. 
Summing~\eqref{eq:indictortriple} over the $(n_1,n_2,n_3)$ satisfying the assumptions, and then expanding the sum, we get
\begin{align*} 
R(N)&=\#\left\{(n_1,n_2,n_3)\in\Z^3_{>0}:\begin{array}{l}
 n_1n_2n_3\tau\in\D(N)\\
 \eqref{eq:forcharsum}\text{ holds, but }
\eqref{eq:exclusions} \text{ is not satisfied}\\
 \gcd(n_1n_2n_3,AB(B-A))=1
\end{array}\right\}\\
&=\sum_{(n_{ij})}
\prod_{i} \frac{1}{4^{\omega(n_{i0})}} 
\prod_{j\neq 0}
\frac{1}{4^{\omega(n_{ij})}}\leg{R_{ij}}{n_{ij}}
\prod_{k\not\in\{i,j\}}\prod_{0\leq l\leq 4} \leg{n_{kl}}{n_{ij}},
\end{align*}
where the sum is over tuples of positive integers $(n_{ij})$ such that $\prod_{ij} n_{ij}\in\D(\frac{N}{\tau})$, such that~\eqref{eq:exclusions} does not hold, but $\gcd(\prod n_{ij},AB(B-A) \tau)=1$.
We apply Theorem~\ref{theorem:charsum} with 
\[\cI=\{10,12,13,14,20,21,23,24,30,31,32,34\},\]
\[ \cJ=
\begin{cases}
\hfil \{30,31,32,34\}&\text{if }\tilde{D}=1,\ 
AB\text{ not square},\ R_{32}\text{ and }R_{31}\text{ are squares,}\\
 \{20,21,23,24,30,31,32,34\}&\text{if }\tilde{D}=1,\ AB,R_{32}\text{ are squares},\\
\hfil \varnothing &\text{otherwise}.
\end{cases}\]
Furthermore, we take $\lambda=\frac{1}{4}$ and 
\[\Phi(kl,ij)=\begin{cases}
 1&\text{if }k\not\in\{ i,j\}\text{ and }j\neq 0,\\
 0&\text{otherwise}.
\end{cases}\]
Finally, we take $f_i(n)=4^{-\omega(n)}$ and $c_i=1$  for all $i\in\cI$.
By \cite[Lemma~9]{HBSelmer1}, the maximal unlinked sets in $\cI$ are of size $4$, and those that are within $\cI$ are
\[\{i0,ij,ik,i4\},\ \{i0, j0, ij, ji\},\ \{ik, i4, jk, j4\},\]
where $i,j,k\in\{1,2,3\}$ denotes different non-zero indices.
Therefore the error term in Theorem~\ref{theorem:charsum} becomes $O(\frac{N}{\tilde{D}}(\log N)^{-\frac{1}{4}+\epsilon})$, which is satisfactory.
It remains to treat the main term in Theorem~\ref{theorem:charsum}. We claim that the only maximal unlinked sets that are admissible coincide with $\cJ$, so that the main term vanishes.

The first case to check is $\cU=\{i0,ij,ik,i4\}$. Here $\Phi(u,v)=0$ for all $u,v\in\cU$, so for~\ref{prop:single} to hold, $R_{ij}$ and $R_{ik}$ are both squares. Since $R_{12},R_{21},R_{13}<0$, the only possible case is when $i=3$, and $R_{31}=(B-A)\tilde{D}\cdot \gamma_2$ and $R_{32}=B\tilde{D}\cdot \gamma_1$ are squares. Since $\tilde{D}$ is coprime to $\gamma_1\gamma_2$ and square-free, it must be that $\tilde{D}\mid B$ and $\tilde{D}\mid B-A$. However by assumptions $B$ and $B-A$ are coprime, so $\tilde{D}=1$. This possibility therefore lies in $\cJ$.
 
In the second case $\cU=\{i0, j0, ij, ji\}$, and again $\Phi(u,v)=0$ for all $u,v\in\cU$. Thus~\ref{prop:single} implies that $R_{ij}$ and $R_{ji}$ are both squares. Since $R_{12},R_{21},R_{13}<0$, the only possible case is when $\{i,j\}=\{2,3\}$, and $R_{23}=A\tilde{D}\cdot \gamma_1$ and $R_{32}=B\tilde{D}\cdot \gamma_1
$ are squares. Then $R_{23}R_{32}$ is a square, and so $AB$ is a square. 
This means that $A$ and $B$ are squares, since $\gcd(A,B)=1$. 
By construction $\tilde{D}$ is square-free and coprime to $\gamma_1$, 
but $\tilde D \gamma_1$ is a square and 
so we conclude that $\tilde{D}=1$, which thereby  leads us to $\cJ$.

For the third case, $\cU=\{ik, i4, jk, j4\}$, and we can check that
\begin{align*}
\Phi(ik,i4)&=\Phi(jk,j4)=0\\
\Phi(ik,jk)&=\Phi(ik,j4)=\Phi(i4,jk)=\Phi(i4,j4)=1.\end{align*}
We check~\ref{prop:double} with $\{ik,i4\}$ or $\{jk,j4\}$.
Then $R_{ik}R_{i4}$ and $R_{jk}R_{j4}$ are both squares, so $R_{ij}$ and $R_{ji}$ are both squares. This again force us into $\cJ$, similarly to the  second case.
\end{proof}

\subsection{Exceptional points of the first kind}
We now treat the first of the cases that were previously excluded, as listed in \eqref{eq:exclusions}.
The following  result on simultaneous Pell equations \cite[Theorem~1.2]{Bugeaud} will prove crucial. 

\begin{lemma}\label{lemma:simPell}
Let $a,b$ be positive integers and $u,v$ be non-zero integers. Then the number of positive integer solutions $(x,y)$ to the equation $ax^2-by^2=u$, 
such that 
\[cy^2-dz^2=v\text{ for some positive integers }c,d,z\text{ satisfying }ab<cd \text{ and }cd\in\D(N),\]
is bounded by 
$O(\sqrt{N}(\log N)^2)$, 
where the implied constant depends at most on $a,b,u,v$.
\end{lemma}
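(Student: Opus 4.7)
My approach would reduce the problem to counting lattice points on a family of lines in the $(c, d)$ plane, and then appeal to a Pell-theoretic bound of \cite{Bugeaud} to control the residual error.

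For fixed positive integers $y, z$, I would view $cy^2 - dz^2 = v$ as a linear Diophantine equation in $(c, d)$. Writing $g = \gcd(y, z)$ and $y = g y'$, $z = g z'$ with $\gcd(y', z') = 1$, the equation has integer solutions iff $g^2 \mid v$, in which case all solutions form a one-parameter family $(c, d) = (c_0 + t z'^2, d_0 + t y'^2)$ for $t \in \ZZ$. Since $v$ is a fixed non-zero integer, only finitely many values of $g$ arise.

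The constraints $c, d \geq 1$ and $cd \leq N$ confine $t$ to a range of length $O(\sqrt N/(y'z'))$, since $cd$ grows like $t^2 (y'z')^2$ for $|t|$ large; hence the number of valid $(c, d)$ for each $(y, z)$ is $O(\sqrt N \, g^2/(yz) + 1)$. The first Pell equation $ax^2 - by^2 = u$ restricts $y$ to a finite union of geometric orbits $y_k \asymp \lambda^k$ with $\lambda > 1$, so in particular $\sum_k 1/y_k$ converges. Summing over $z \geq 1$ (with $z \ll y\sqrt N$ from the range condition) contributes a factor of $\log N$, and the main-term contribution is thus $O(\sqrt N \log N)$.

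The main obstacle is the residual ``$+1$'' term in the bound on the number of valid $(c, d)$, which corresponds to pairs $(y, z)$ with $y'z'$ comparable to or exceeding $\sqrt N$; a crude summation gives an unacceptable error here. This is precisely where \cite[Theorem~1.2]{Bugeaud}, giving effective bounds on the number of common solutions to simultaneous Pell equations, is brought to bear, and it yields the extra $\log N$ factor in the stated bound.
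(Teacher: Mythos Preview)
Your lattice-counting detour is unnecessary and obscures the actual argument. You are bounding the number of tuples $(y,z,c,d)$, which majorises the number of pairs $(x,y)$, but the entire content of the lemma lies in what you call the ``$+1$'' term, and your treatment of it is too vague. The main-term computation contributes nothing: once you invoke Bugeaud to handle the residual, you have already proved the full bound without it.

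The paper's proof is direct. Rescale the two equations to the shape $(ax)^2 - (ab)y^2 = au$ and $(dz)^2 - (cd)y^2 = -dv$, so that both are of Pell type in the common variable $y$. Bugeaud's theorem then gives
\[
y \leq \max\{|au|,|dv|,2\}^{C\sqrt{abcd}\,(\log ab)(\log cd)},
\]
and since $cd\leq N$ and $d\leq N$, taking logarithms yields $\log y \ll \sqrt{N}(\log N)^2$ uniformly in $c,d,z$. Now the Pell structure of $ax^2-by^2=u$ (your orbits $y_k\asymp\lambda^k$) converts this size bound into a count: the number of admissible $y$ is $O(\sqrt{N}(\log N)^2)$, and each $y$ determines $x$ up to $O(1)$ choices. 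The degenerate case $ab\in\Q^2$ is handled separately by factoring $ax^2-by^2$ over $\Q$.

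What you are missing is that Bugeaud's theorem bounds the \emph{size} of simultaneous solutions, not their number directly; the count comes only after feeding that size bound back through the Pell orbit structure of the \emph{first} equation alone. Your sketch gestures at this but does not make the mechanism explicit, and the parametrisation of $(c,d)$ for fixed $(y,z)$ plays no role in it.
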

\begin{proof}
Rescale the equations to
\begin{align*}
(ax)^2-aby^2&=au,\\
(dz)^2-cdy^2&=-dv.
\end{align*}
First suppose that none of $ab$, $cd$ and $abcd$ are squares.
Then by \cite[Theorem~1.2]{Bugeaud}, we have the upper bound
\begin{equation}\label{eq:simP}
\max\{ax,y,dz\}\leq \max\{|au|,|dv|,2\}^{C\sqrt{abcd}(\log ab)(\log cd)},\end{equation}
for  an absolute constant  $C>0$. 
Suppose we have a solution to the first equation and 
let $\varepsilon$ be the fundamental unit of $\Q(\sqrt{ab})$. Then there exists a positive integer $k$
such that 
\[\varepsilon^k\leq ax+\sqrt{ab}y<\varepsilon^{k+1}.\]
We refer to $\alpha=\varepsilon^{-k}(ax+\sqrt{ab}y)$  as the base solution, 
noting that $1\leq \alpha<\varepsilon$
and $|\overline{\alpha}|=|au/\alpha|\leq |au|$.
In particular, the number of base solutions only depends on $a,b,u$.

The solutions to the  first equation all take the form
\[ax+\sqrt{ab}y=\alpha\varepsilon^{l},\] 
for positive integers $l$. Thus, given  a base solution, it follows from  \eqref{eq:simP} that  
\[\varepsilon^{l}\ll \max\{|au|,|dv|,2\}^{C\sqrt{abcd}(\log ab)(\log cd)}.\]
Taking logs and noting that $cd\leq N$,
we have
\[l\ll \sqrt{abcd}(\log ab)(\log cd)(\log \max\{|au|,|dv|,2\})+1\ll \sqrt{N}(\log N)^2,
\]
which gives the claimed upper bound.

It remains to check the case when one of $ab$, $cd$ and $abcd$ is a square.
Since $cd$ is square-free and $cd>ab\geq 1$, $cd$ and $abcd$ cannot be squares. 
If $ab$ is a square, then we can factor the first equation as $(ax-\sqrt{ab}y)(ax+\sqrt{ab}y)=au$ over $\Q$. There are finitely many ways to factor $au$ as two factors over $\Q$, from which we can solve for $x$ and $y$. Therefore in this case the number of solutions $(x,y)$ is bounded in terms of $a$ and $u$.
\end{proof}

The first case in \eqref{eq:exclusions} comes from integral points of the form $(BD,0)+2E_D(\Q)$. In this case $\tilde{D}=n_1=n_2=1$, and $R_{32}=B\gamma_1$, $R_{31}=(B-A)\gamma_2$ are both squares, so $G_1B$ and $G_2(B-A)$ are squares. Putting back to \eqref{eq:factorrhs}
we see that $xBD$ and $(x-AD)(B-A)D$ are squares.
\begin{lemma}\label{lemma:Lsquare}
We have
\[\sum_{D\in\D^+(N)}
\#\left\{(x,y)\in E_D^*(\Z):\begin{array}{l}
 xBD\in\Q^2,\ (x-AD)(B-A)D\in\Q^2,\\ \gcd(x,D)=D,\ x>BD
\end{array}\right\}
\ll\sqrt{N}(\log N)^2,
\]
where the implied constant depends at most on $A,B$.
 \end{lemma}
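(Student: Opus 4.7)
The plan is to reduce the two square-hypotheses to a pair of Pell-type equations sharing a variable, and then apply Lemma~\ref{lemma:simPell}.

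Given $(x,y)\in E_D^*(\Z)$ satisfying the hypotheses, the condition $D\mid x$ lets us write $x=D\tilde{x}$. Since $y^2 = D^3\tilde{x}(\tilde{x}-A)(\tilde{x}-B)$ and $D$ is square-free, we deduce that $D^2\mid y$, so $y=D^2\tilde{y}$ and the defining equation becomes $\tilde{x}(\tilde{x}-A)(\tilde{x}-B) = D\tilde{y}^2$ with $\tilde{x}>B$. The hypotheses $xBD\in\Q^2$ and $(x-AD)(B-A)D\in\Q^2$ translate into $B\tilde{x}\in\Q^2$ and $(B-A)(\tilde{x}-A)\in\Q^2$. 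Writing $B=B_1B_2^2$ and $B-A = C_1C_2^2$ with $B_1,C_1$ square-free, these conditions force $\tilde{x}=B_1u^2$ and $\tilde{x}-A=C_1v^2$ for positive integers $u>B_2$ and $v$, subject to the Pell-type equation
\begin{equation}\label{eq:pell-one-plan}
B_1 u^2 - C_1 v^2 = A.
\end{equation}
Using $\tilde{x}-B = B_1(u^2-B_2^2)$, the cubic equation becomes $B_1^2C_1(uv)^2(u^2-B_2^2)=D\tilde{y}^2$, so $D$ must be the square-free part of $C_1(u^2-B_2^2)$, and there exists a positive integer $m$ with
\begin{equation}\label{eq:pell-two-plan}
C_1 u^2 - D m^2 = C_1 B_2^2,
\end{equation}
sharing the variable $u$ with~\eqref{eq:pell-one-plan}.

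We then apply Lemma~\ref{lemma:simPell} after rewriting~\eqref{eq:pell-one-plan} as $C_1v^2 - B_1u^2 = -A$ to match the lemma's format. To secure the required square-freeness of $cd$, set $D_0=\gcd(C_1,D)$ and divide \eqref{eq:pell-two-plan} through by $D_0$ to obtain $(C_1/D_0)u^2 - (D/D_0)m^2 = (C_1/D_0)B_2^2$; the product $cd=(C_1/D_0)(D/D_0)$ is square-free because $\gcd(C_1/D_0,D/D_0)=1$ by the square-freeness of $C_1$ and $D$. There are $O_{A,B}(1)$ choices of $D_0\mid C_1$, and for each the condition $ab<cd$ in Lemma~\ref{lemma:simPell} becomes $B_1 D_0 < D/D_0$, failing only for $O_{A,B}(1)$ values of $D$; for such exceptional $D$ the common solutions of~\eqref{eq:pell-one-plan} and~\eqref{eq:pell-two-plan} number $O_{A,B}(1)$ by standard simultaneous-Pell results. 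For the remaining $D$, Lemma~\ref{lemma:simPell} provides $O(\sqrt{N}(\log N)^2)$ pairs $(u,v)$; each determines $D$ (as the square-free part of $C_1(u^2-B_2^2)$) and hence $(x,y)$ up to the sign of $y$, which yields the bound.

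The main obstacle is arranging for the square-freeness and ordering hypotheses of Lemma~\ref{lemma:simPell} to be satisfied, which is handled through the case split on $D_0=\gcd(C_1,D)$; the degenerate case in which $B_1C_1$ is a perfect square is absorbed into the $O(1)$ handling built into Lemma~\ref{lemma:simPell}.
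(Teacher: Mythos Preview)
Your proof is correct and follows essentially the same strategy as the paper: reduce the two square conditions to a pair of simultaneous Pell-type equations sharing the variable $u=y_1$, and then invoke Lemma~\ref{lemma:simPell}. The only difference is cosmetic---the paper uses the equations $(B-A)U^2-BV^2=AB(B-A)$ and $U^2-BG_3y_3^2=B^2$ coming directly from~\eqref{eq:conic3}, whereas you derive an equivalent second equation $C_1u^2-Dm^2=C_1B_2^2$ from the square-free part of $C_1(u^2-B_2^2)$; your treatment of the square-freeness hypothesis via the split over $D_0=\gcd(C_1,D)$ is in fact more explicit than what the paper records.
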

 \begin{proof}
The conditions implies that $ \tilde{x}B$ and $( \tilde{x}-A)(B-A)$ are squares. In the notation of \eqref{eq:factorrhs}, $G_1$ is the square-free part of $B$ and $G_2$ is the square-free part of $B-A$.
Substitute
\[BG_1y_1^2=U^2\quad \text{ and }\quad (B-A)G_2y_2^2=V^2\]
into the first two equations of~\eqref{eq:conic3},
\[
 \begin{cases}
(B-A)U^2-BV^2=AB(B-A)\\
U^2-BG_3y_3^2=B^2.
\end{cases}
\]
By Lemma~\ref{lemma:simPell}, the number of positive integers $(U,V)$ satisfying the equations is bounded by $\ll_{A,B} \sqrt{N}(\log N)^2$. We can recover the integral point $(x,\pm y)$ from each $(U,V)$.
 \end{proof}

\subsection{Exceptional points of the second kind}
We proceed to treat the second case in \eqref{eq:exclusions}, and so we assume that $AB, R_{32}$ are squares, $\tilde{D}=1$ and $n_1=1$. In particular, $D=g$ and $G_1=\gamma_1$.
Since $A,B$ are coprime, we can write $A=a^2$, $B=b^2$ for some integers $a,b$. 
Since $R_{32}=b^2\gamma_1$ is a square, we see that $G_1$ is a square and hence $\tilde{x}$ is a square. Thus $x=g\tilde{x}=Du^2$ for some integer $u$. 
Finally the conditions $G_1,G_2,G_3>0$ are equivalent to  $x>BD$. In this way we are led to tackle the following result. 

\begin{lemma}\label{lemma:cyclic4}
Let $B>A$ be coprime positive integers that are both squares.
We have
\[\sum_{D\in\D^+(N)}\#\left\{(x,y)\in E_D^*(\Z):
xD\in\Q^2,\ x>BD
\right\}
\ll N(\log N)^{-\frac{1}{8}+\epsilon},
\]
where
the implied constant depends at most on $A,B$.
\end{lemma}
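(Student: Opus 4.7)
The plan is to apply the character-sum machinery of Theorem~\ref{theorem:charsum} after performing a $4$-descent on the associated genus-$1$ curve. Writing $A = a^2$ and $B = b^2$ with $a, b$ coprime positive integers and $b > a$, the condition $xD \in \Q^2$ combined with $D$ squarefree forces $x = Du^2$ for some positive integer $u$, while $x > BD$ becomes $u > b$. Substituting into $y^2 = x(x - AD)(x - BD)$ and cancelling $(Du)^2$ shows that $(x, y) \in E_D^*(\Z)$ is equivalent to the existence of $w \in \Z$ with
\[w^2 = D(u - a)(u + a)(u - b)(u + b).\]

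Next, perform the $2$-descent $u - a = G_1 z_1^2$, $u + a = G_2 z_2^2$, $u - b = G_3 z_3^2$, $u + b = G_4 z_4^2$ with squarefree positive integers $G_i$, forcing $D G_1 G_2 G_3 G_4 \in \Q^2$. Since the pairwise gcds among $\{u - a, u + a, u - b, u + b\}$ divide $2ab(a - b)(a + b)$, the decomposition $G_i = \gamma_i n_i$ with $\gamma_i \mid 2ab(a - b)(a + b)$ and $\gcd(n_i, 2ab(a - b)(a + b)) = 1$ yields pairwise coprime squarefree positive integers $n_1, n_2, n_3, n_4$, and $D = n_1 n_2 n_3 n_4 \cdot (\text{fixed factor})$ up to $O_{A, B}(1)$ choices for the $\gamma_i$ and the small-prime part of $D$. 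At each prime $p \mid n_i$ outside a fixed finite set, Hensel lifting of $u$ produces three Legendre-symbol conditions $\leg{R_{ij} n_j}{p} = 1$ for $j \neq i$, where each $R_{ij}$ is the product of $\gamma_j$ with a fixed element of $\{\pm 2a, \pm 2b, \pm(a - b), \pm(a + b)\}$.

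Expanding each of the $12$ conditions as $\tfrac12(1 + \leg{\cdot}{\cdot})$ and decomposing each $n_i$ into $2^3 = 8$ pieces $n_{i, S}$ indexed by subsets $S \subseteq \{1, 2, 3, 4\} \setminus \{i\}$ casts the problem in the form of Theorem~\ref{theorem:charsum}, with index set $\cI$ of size $32$, weights $f_i(p) = 1/8$ (so $\lambda = 1/8$), and $\Phi((i, S), (j, T)) = \mathbf{1}[i \neq j,\, j \in S]$. Two indices $(i, S), (j, T)$ with $i \neq j$ are unlinked if and only if $(j \in S) \Leftrightarrow (i \in T)$; parametrising an unlinked set $\cU$ by its support $I = \{i : \cU_i \neq \emptyset\} \subseteq \{1,2,3,4\}$ of size $k$ together with a symmetric $\epsilon \in \F_2^{I \times I}$ gives $|\cU| \leq k \cdot 2^{4 - k}$, maximised to $M = 8$ at $k \in \{1, 2\}$. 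Declaring the exclusion sets $\cJ_{\{i\}} = \{(i, S) : S\}$ and $\cJ_{\{i, j\}} = \cJ_{\{i\}} \cup \cJ_{\{j\}}$ for every $i$ and every pair $i \neq j$, every maximal unlinked set is contained in some $\cJ$, so the main term in Theorem~\ref{theorem:charsum} vanishes. The resulting error is $O(N(\log N)^{(M - 1)\lambda - 1 + \epsilon}) = O(N(\log N)^{-1/8 + \epsilon})$.

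The main obstacle lies in handling the excluded configurations, namely tuples with $n_j = 1$ for all $j \notin J$ for some $J \subseteq \{1, 2, 3, 4\}$ of cardinality $|J| \leq 2$. When $|J| \leq 1$, at least three of the four descent equations take the bounded form $u + \star = \gamma_\star z_\star^2$, producing at least two simultaneous Pell equations in the three variables $z$; Lemma~\ref{lemma:simPell} then bounds the number of admissible $u$ by $O_{A, B}(1)$. When $|J| = 2$, two equations become bounded, reducing to a single Pell equation whose positive integer solutions of the height needed to produce $D \leq N$ number $O(\log N)$, with $u$ determining $(n_j)_{j \in J}$ uniquely. Summing over the $O_{A, B}(1)$ choices of $(\gamma_1, \gamma_2, \gamma_3, \gamma_4)$ and the small-prime part of $D$, the total contribution of the excluded configurations is $O_{A, B}(\log N)$, comfortably absorbed into the claimed bound.
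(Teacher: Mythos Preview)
Your character-sum setup and the combinatorial analysis of the maximal unlinked sets match the paper's proof exactly, and the error term $O(N(\log N)^{-1/8+\epsilon})$ from Theorem~\ref{theorem:charsum} is obtained in the same way. The treatment of the case $|J|\leq 1$ is also fine (all Pell coefficients are then bounded, so finiteness is immediate).

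The gap is in your handling of the case $|J|=2$. You write that the two bounded descent equations reduce matters to ``a single Pell equation whose positive integer solutions of the height needed to produce $D\leq N$ number $O(\log N)$''. This step is not justified. With $n_i=n_j=1$ the coefficients $g_i,g_j$ are bounded, so the equation $g_iy_i^2-g_jy_j^2=c_i-c_j$ has infinitely many solutions with $u\asymp\epsilon^{2l}$ for the $l$-th solution. The constraint $D\leq N$ reads $g_kg_l\ll N$, where $g_k,g_l$ are the squarefree parts of $u-c_k$ and $u-c_l$; these can be arbitrarily small compared with $u$, so there is no a priori polynomial bound $u\ll N^{O(1)}$, and hence no $O(\log N)$ bound on $l$ from the single Pell equation alone.

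What rescues the argument is precisely the \emph{simultaneous} structure that you discarded: one also has $g_jy_j^2-g_ky_k^2=c_j-c_k$, sharing the variable $y_j$ with the first equation but with the large coefficient $g_k$ satisfying $g_jg_k\ll N$. This is exactly the configuration of Lemma~\ref{lemma:simPell}, which yields $O(\sqrt{N}(\log N)^2)$ admissible pairs $(y_i,y_j)$, and that is what the paper invokes here. Replacing your $O(\log N)$ claim by an appeal to Lemma~\ref{lemma:simPell} (with the equation sharing a variable) fixes the argument and gives a bound that is still comfortably within $N(\log N)^{-1/8+\epsilon}$.
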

\begin{proof}
Suppose $(x,y)\in E_D^*(\Z)$ and $x=Du^2$, where $u$ is a positive integer. Substituting  $x=Du^2$ into the equation $y^2=x(x-a^2D)(x-b^2D)$, we obtain
\[(u^2-a^2)(u^2-b^2)=Dt^2,\]
for some non-zero integer $t$, since $y\neq 0$.
Then from the factorisation we can write
\begin{equation}\label{eq:Gy2}
 \begin{split}
 u-a&=g_1y_1^2,\\
 u+a&=g_2y_2^2,\\
 u-b&=g_3y_3^2,\\
	u+b&=g_4y_4^2.
\end{split}
\end{equation}
where $g_1,g_2,g_3,g_4$ are positive square-free integers such that $g_1g_2g_3g_4=Dv^2$ for some integer $v$. If $p^k\mid v$ then we must have $p^k\mid \gcd(g_i,g_j)$ for some $i\neq j$. In this way, since
$a,b$ are assumed to be coprime, it follows that $v\mid 2ab(b^2-a^2)$.
Write 
$$n_i=\frac{g_i}{\gcd(g_i,2ab(b^2-a^2))},
$$
for $1\leq i\leq 4$. 
We easily conclude that 
\begin{equation}\label{eq:charsum4}
\begin{cases}
\hfil  \leg{2ag_2}{p}=\leg{(b-a)g_3}{p}=\leg{(a+b)g_4}{p}=1 &\text{if }p\mid n_1,\\
 \hfil \leg{-2ag_1}{p}=\leg{-(a+b)g_3}{p}=\leg{(b-a)g_4}{p}=1&\text{if }p\mid n_2,\\
 \hfil \leg{-(b-a)g_1}{p}=\leg{(a+b)g_2}{p}=\leg{2bg_4}{p}=1&\text{if }p\mid n_3,\\
 \hfil \leg{-(a+b)g_1}{p}=\leg{-(b-a)g_2}{p}=\leg{-2bg_3}{p}=1&\text{if }p\mid n_4.
\end{cases}
\end{equation}
Note that $n_1\cdots n_4\mid g_1\cdots g_4=Dv^2$, whence
$n_1\cdots n_4\leq N$, since $n_1,\dots,n_4$ are coprime to $v$. 

In order to prove the lemma we will first focus on  bounding the 
quantity
\begin{align*} 
R(N)&=\#\left\{(n_1,n_2,n_3,n_4)\in\Z^3_{>0}:\begin{array}{l}
 n_1n_2n_3n_4\in\D(N),
 \eqref{eq:charsum4}\text{ holds}\\
 \gcd(n_1n_2n_3n_4,ab(b^2-a^2))=1\\
  n_in_j\neq 1\text{ for all }\{i,j\}\subset\{1,2,3,4\}
\end{array}\right\}.
\end{align*}
We shall deal with the case in which 
$ n_in_j=1$ for some $\{i,j\}\subset\{1,2,3,4\}$ at the end of the proof. 
Clearly,  there exist  $R_i\mid a^2b^2(b^2-a^2)^2$, for $1\leq i\leq 4$, such that 
\eqref{eq:charsum4} can be rewritten as
\[\leg{R_in_i}{p}=1\text{ if }p\mid n_j,\]
whenever $i\neq j$. 
Since there are only finitely many possible $R_1,R_2,R_3,R_4$ given $a$ and  $b$, we will view $R_1,R_2,R_3,R_4$ as fixed.
The condition on all $p\mid n_1$ can be packaged as
\begin{align*}
\Pi_1
&=\frac{1}{8^{\omega(n_1)}} \prod_{p\mid n_1}\left(1+\leg{R_{2}n_2}{p}\right)\left(1+\leg{R_{3}n_3}{p}\right)\left(1+\leg{R_{4}n_4}{p}\right)\\
&=\frac{1}{8^{\omega(n_1)}}
\sum_{n_1=\prod_{S\in\cP(\{2,3,4\})}n_{1,S}}
\leg{R_{2}n_2}{n_{1,\{2\}}}
\leg{R_{3}n_3}{n_{1,\{3\}}}
\leg{R_{4}n_4}{n_{1,\{4\}}}
\leg{R_{2}R_{3}n_2n_3}{n_{1,\{2,3\}}}\\
&\hspace{12em}
\times \leg{R_{2}R_{4}n_2n_4}{n_{1,\{2,4\}}}
\leg{R_{3}R_{4}n_3n_4}{n_{1,\{3,4\}}}
\leg{R_{2}R_{3}R_{4}n_2n_3n_4}{n_{1,\{2,3,4\}}},
\end{align*}
where $\cP$ denotes the power set. We define $\Pi_2,\Pi_3,\Pi_4$
similarly, and expand the products with indexing
\[n_1=\prod_{S\in\mathcal{P}(\{2,3,4\})}
\hspace{-0.2cm}
n_{1,S},\quad n_2=\prod_{S\in\mathcal{P}(\{1,3,4\})}
\hspace{-0.2cm}n_{2,S},\quad n_3=\prod_{S\in\mathcal{P}(\{1,2,4\})}
\hspace{-0.2cm}n_{3,S},\quad  n_4=\prod_{S\in\mathcal{P}(\{1,2,3\})}
\hspace{-0.2cm}n_{4,S}.\]
Define
\[\Phi((i,S),(j,S'))=\begin{cases}
1&\text{if }j\in S,\\
0&\text{otherwise},
\end{cases}\]
and take the set of indices to be
\[\cI=\left\{(i,S):i\in\{1,2,3,4\},\ S\in\cP\left(\{1,2,3,4\}\setminus\{i\}\right)\right\}.\]
Then we may write
\begin{align*} 
R(N)
&=\sum_{(n_{\u})}\Pi_1\Pi_2\Pi_3\Pi_4
=\sum_{(n_{\u})}
\prod_{\u\in\cI}
\frac{1}{8^{\omega(n_{\u})}}\leg{R_{\u}}{n_{\u}}
\prod_{\u,\v\in\cI} \leg{n_{\u}}{n_{\v}}^{\Phi(\u,\v)},
\end{align*}
where $R_\u$ depends on $R_1,R_2,R_3,R_4$, and the sum is over tuples of positive integers $(n_{\u})$ such that $\prod_{\u} n_{\u}\in\D(N)$ and 
$\gcd(\prod n_{\u },ab(b^2-a^2))=1$, and such that 
 $n_{\u}n_{\v}\neq 1$ for all distinct $\u,\v$.
We apply Theorem~\ref{theorem:charsum} with 
$\lambda=\frac{1}{8}$ and 
\[\cJ_{i,j}=\{(i,S)\in\cI\}\cup \{(j,S)\in\cI\}\text{ for }\{i,j\}\subset\{1,2,3,4\}.\]
Furthermore, we take 
$f_\u(n)=8^{-\omega(n)}$
and $c_\u=1$, 
for all $\u\in \mathcal{I}$.
We can check that the maximal unlinked sets have size $8$ and each of them is contained in  one of the $\cJ_{i,j}$. Thus the sum over $\cU$ vanishes. Indeed, suppose that $(i_1,S_1)$ is unlinked to $(i_2,S_2)$ and $(i_3,S_3)$, where $i_1,i_2,i_3$ are distinct. Then,  whether or not $S_1$ contains $i_2$ is determined by whether or not $i_1$ is contained in $S_2$. Similarly,  whether or not $S_1$ contains $i_3$ is determined by whether or not $i_1$ is in $S_3$. This only leaves $2$ possible choices for $S_1$,  given $S_2$ and $S_3$. Hence the size of any unlinked indices, not of the form $\cJ_{i,j}$, is bounded by $6$.
Therefore, on putting $M=8$, 
it follows fromTheorem~\ref{theorem:charsum}  that the sum is bounded by $O(N(\log N)^{-\frac{1}{8}+\epsilon})$.

It remains to deal with the remaining cases not considered in $R(N)$, in which two of $n_1,n_2,n_3,n_4$ are equal to $1$. We assume that $n_i=n_j=1$, with  $\{i,j,k,l\}=\{1,2,3,4\}$. Take the difference between the equations in \eqref{eq:Gy2} that are
associated to $n_i$ and $n_j$, and then the difference between those associated to $n_k$ and $n_k$, in order to get
\begin{align*}
g_iy_i^2-g_jy_j^2&=c_i-c_j,\\
g_ky_k^2-g_ly_l^2&=c_k-c_l,
\end{align*}
where $c_1=-a, c_2=a,c_3=-b, c_4=b$.
Notice that $g_i, g_j\mid ab(b^2-a^2)$ because $n_i=n_j=1$. 
Therefore we can apply Lemma~\ref{lemma:simPell} to get  the overall bound $O(\sqrt{N}(\log N)^2)$ in this case.
\end{proof}

\subsection{Conclusion}

\begin{proof}[Proof of Lemma~\ref{lemma:positivepoints}]
We apply Lemma~\ref{lemma:genericgcd}. The exceptions in \eqref{eq:exclusions} are dealt with in Lemma~\ref{lemma:Lsquare} and Lemma~\ref{lemma:cyclic4}.
 \end{proof}
\begin{proof}[Proof of Proposition~\ref{prop:fulltwolarge}]
Combine Lemma~\ref{lemma:negativepoints} and Lemma~\ref{lemma:positivepoints}. 
 \end{proof}

\begin{proof}[Proof of Theorem~\ref{theorem:fulltors}]
Apply Proposition~\ref{prop:fulltwolarge} with $\kappa=13$. 
For the points $(x,y)\in E^*_D(\Z)$ with small $\gcd(x,D)$, we observe that $(X,Y)=(9x-3(A+B)D,27y)\in\Z^2$ gives a non-trivial integral point on the short Weierstrass model 
\[Y^2=X^3 -27 (A^2 + B^2 -AB)D^2X-27 (2 B-A) (B-2A) (A + B)D^3.\]
Moreover, the relation $9x=X+3(A+B)D$ implies that $\gcd(X,D)\mid 9\gcd(x,D)$. We may therefore apply Lemma~\ref{lemma:smallgcd} with $K=9N(\log N)^{-\kappa}$ to  bound the number of integral points with $\gcd(x,D)\leq N(\log N)^{-\kappa}$. We conclude that
 \[
\#\left\{D\in\D^+(N):
\begin{array}{l}
\gcd(x,D)\leq N(\log N)^{-\kappa}\\\text{for some }(x,y)\in E_D^*(\Z)\end{array}\right\}
\ll N(\log N)^{-\frac{1}{2}\kappa+6},
\]
which  proves the desired upper bound for $D\in\D^+(N)$. 
 
 To treat negative $D\in\D(N)$, we instead consider  the family $$
 E_D:y^2=x(x-(B-A)D)(x-BD),
 $$ 
 with $D\in\D^+(N)$ Thus  the role of $A$ is replaced by $B-A$, and the argument runs as before, leading to the same conclusion. 
\end{proof}

\section{Quadratic twists with partial two-torsion}\label{s:8}
To prove Theorem~\ref{theorem:mainpartt}, we consider instead the model
\begin{equation}\label{eq:model2}
E_D:y^2=x(x^2+ADx+BD^2),
\end{equation}
for integers $A,B$ such that $A^2-4B\notin\Q^2$.
For such curves, the only two-torsion points are the point at infinity and $(0,0)$, so
\[
E_D^*(\Z)=E_D(\Z)\setminus\{(0,0)\}=\{(x,y)\in\Z^2: y^2=x(x^2+ADx+BD^2),\ y\neq 0\}.
\]
The lower bound in Theorem~\ref{theorem:mainpartt} follows from Lemma~\ref{lemma:twistslb}, whereas the  upper bound will follow from the next result. 

\begin{theorem}\label{theorem:partialtors}
Let $E_D$ be given by \eqref{eq:model2}, for  $A,B\in\Z$ such that $A^2-4B\notin\Q^2$. 
If $B<0$ or $B\in\Q^2$, then
\[
\#\left\{D\in\D(N): E_D^*(\Z)\neq \varnothing\right\}
\ll N(\log N)^{-\frac{1}{8}}\log\log N.
\]
If $B>0$ and $B\notin\Q^2$, then
\[
\#\left\{D\in\D(N):\begin{array}{l}
xB\notin\Q^2 \text{ or }
\gcd(x,D)<N(\log N)^{-\frac{49}{4}}\\
\text{or }x<\exp(N(\log N)^{-\frac{99}{8}})\\
\text{for some }(x,y)\in E_D^*(\Z)
\end{array}
\right\}
\ll N(\log N)^{-\frac{1}{8}}\log\log N.
\]
The implied constants depend at most on $A$ and $B$.
\end{theorem}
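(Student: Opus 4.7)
The plan is to adapt the three-factor blueprint of Section~\ref{s:7} to the two-factor setting imposed by the irreducible quadratic $Q(x,\tilde D) = x^2 + A\tilde D x + B\tilde D^2$. First, the small-$\gcd(x,D)$ regime is handled as in the concluding paragraph of that section: translate to a short Weierstrass model via $(X,Y) = (9x-3AD,27y)$ and apply Lemma~\ref{lemma:smallgcd} with $K = 9N(\log N)^{-\kappa}$ for a suitably large $\kappa$. Since this substitution preserves $\gcd(x,D)$ up to an absolute factor, the problem reduces to bounding the number of $D\in\D^+(N)$ admitting $(x,y)\in E_D^*(\ZZ)$ with $g \coloneqq \gcd(x,D) \geq N(\log N)^{-\kappa}$, with negative $D$ treated symmetrically under $A\mapsto -A$.

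For such a point, writing $x = g\tilde x$, $D = g\tilde D$, $y = g^2\tilde y$ (using square-freeness of $D$), the equation becomes $g\tilde y^2 = \tilde x\cdot Q(\tilde x,\tilde D)$ with $\gcd(\tilde x,\tilde D) = 1$ and $\gcd(\tilde x,Q(\tilde x,\tilde D))\mid B$. Factor $\tilde x = G_1 y_1^2$, $Q(\tilde x,\tilde D) = G_2 y_2^2$ with $G_1,G_2$ positive square-free and $G_1G_2 = g(\gcd(G_1,G_2))^2$, and split $G_i = \gamma_i n_i$ with $\gamma_i = \gcd(G_i,B(A^2-4B))$ fixed. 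At primes $p\mid n_1$ coprime to $2B(A^2-4B)\tilde D$, the congruence $G_2 y_2^2 \equiv B\tilde D^2\pmod p$ gives $\bigl(\tfrac{G_2 B}{p}\bigr) = 1$; at primes $p\mid n_2$ in the same range, solvability of $Q\equiv 0\pmod p$ forces $p$ to split in $K = \QQ(\sqrt{A^2-4B})$, with a further class condition on $\tilde x -\alpha\tilde D$ modulo primes $\p\mid p$ of $\OO_K$, where $\alpha\in\OO_K$ is a root of $z^2+Az+B$.

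Packaging these local conditions into $(1+\chi)/2$ indicator products, expanding, and applying Theorem~\ref{theorem:charsum}, one obtains a main term plus an admissible error. The exclusion sets $\cJ_k$ separate the degenerate cases $\tilde D = 1$, $n_1 = 1$, and $n_2 = 1$; inspection of \ref{prop:single}--\ref{prop:double} reveals that the maximal admissible unlinked sets all force one of $B$, $B\tilde D$, or $xB$ to be a rational square. When $B<0$, sign constraints in the fibres of~\eqref{eq:run} confine the relevant $G_i y_i^2$ values to a bounded range, yielding a simultaneous Pell problem controlled by Lemma~\ref{lemma:simPell} in the manner of Lemma~\ref{lemma:negativepoints}. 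When $B\in\QQ^2$, writing $B = b^2$ the surviving configuration $xB\in\QQ^2$ becomes $x = Du^2$, and one performs an auxiliary 2-descent on the resulting factorisation analogous to Lemma~\ref{lemma:cyclic4}; a second application of Theorem~\ref{theorem:charsum} to the ensuing multi-variable sum then yields the $(\log N)^{-\frac{1}{8}+\epsilon}$ saving.

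The case $B>0$, $B\notin\QQ^2$ is structurally harder: the surviving configuration $xB\in\QQ^2$ introduces a second quadratic irrationality $\sqrt B$ independent of $\sqrt{A^2-4B}$, and the associated admissible unlinked set contributes a non-vanishing main term. Accordingly, the statement for this case restricts attention to integral points falling outside this configuration together with the small-$\gcd$ and extremely-large-$x$ regimes, the last absorbed through an explicit size cutoff $\exp(N(\log N)^{-\frac{99}{8}})$; under Conjecture~\ref{con} these exceptional clauses become vacuous when bridging to Theorem~\ref{theorem:mainpartt}. The main obstacle in the plan is the $B\in\QQ^2$ subcase: the second-level descent must reproduce the four-variable combinatorics of Lemma~\ref{lemma:cyclic4} precisely, and a careful enumeration of maximal admissible unlinked sets is required to extract the $\frac{1}{8}$-saving rather than the naive $\frac{1}{4}$.
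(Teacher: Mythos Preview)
Your plan for the generic large-$\gcd$ regime—factoring $g\tilde y^2 = \tilde x\,Q(\tilde x,\tilde D)$, extracting local conditions at primes dividing $n_1,n_2$, and feeding the resulting indicator products into Theorem~\ref{theorem:charsum}—matches the paper's Proposition~\ref{prop:partialgcd} and Lemma~\ref{lemma:partialgenericgcd}. The small-$\gcd$ reduction via Lemma~\ref{lemma:smallgcd} is also correct (the translation should be $(X,Y)=(9x+3AD,27y)$, not $9x-3AD$, but this is immaterial). One point of inaccuracy: in the partial-torsion application of Theorem~\ref{theorem:charsum} the main term does \emph{not} vanish; it is bounded directly by $\frac{N}{\tilde D}(\log N)^{-1/8}$ via Lemma~\ref{lemma:Shiu}, rather than being killed by exclusion sets as in Lemma~\ref{lemma:genericgcd}.

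The genuine gap is your treatment of the exceptional configuration $xB\in\QQ^2$ (Lemma~\ref{lemma:p2excep} in the paper), which you handle incorrectly in all three subcases.

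For $B<0$: you invoke Lemma~\ref{lemma:simPell}, but once Lemma~\ref{lemma:partialnegative} has disposed of the compact component, all remaining points have $x>0$, whence $xB<0$ cannot be a square. The exceptional case is vacuous; no Pell argument is needed.

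For $B\in\QQ^2$: your proposed second descent ``analogous to Lemma~\ref{lemma:cyclic4}'' does not work. In Lemma~\ref{lemma:cyclic4} the substitution $x=Du^2$ produced $(u^2-a^2)(u^2-b^2)=Dt^2$, a product of four \emph{linear} factors over $\QQ$, enabling a four-variable descent. Here $xB\in\QQ^2$ with $B=b^2$ gives $x=u^2$ (not $Du^2$), and the residual factor $u^4+ADu^2+BD^2$ is irreducible over $\QQ$ since $A^2-4B\notin\QQ^2$; there are no rational linear factors to descend on. The paper instead passes to $K=\QQ(\sqrt{A^2-4B})$, writes $\tilde x-\alpha\tilde D=\mu\xi^2$ with $\mu$ ranging over a finite set, and deduces an equation $\eta\eta'=\tilde D^2(A^2-4B)$ in $\QQ(\sqrt B)$. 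When $B$ is a square this is $\ZZ$, giving $O(3^{\omega(\tilde D)})$ divisors $\eta$; summing over $\tilde D\leq(\log N)^\kappa$ suffices.

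For $B>0$, $B\notin\QQ^2$: you note the cutoff $x<\exp(N(\log N)^{-99/8})$ but do not say how it is used. The mechanism is the same $\eta\eta'$ equation, now genuinely in $\OO_{\QQ(\sqrt B)}$: each of the $O(3^{\omega(\tilde D)})$ ideals $\eta\OO_{\QQ(\sqrt B)}$ has generators $\beta\epsilon^k$ with $\epsilon$ the fundamental unit, and $1<\eta\ll x$ bounds $k\ll\log x$. The $x$-cutoff is precisely what makes this unit count $\ll N(\log N)^{-\kappa-\tau}$ per $\tilde D$; without it the exceptional clause in the statement has no leverage.
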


Conjecture \ref{con} pertains to elliptic curves in short Weierstrass form. After a suitable  change of variables, the equation \eqref{eq:model2} defining $E_D$ can be transformed into an equation of the form 
$y^2=x^3+A'x+B'$, with $A',B'\in \ZZ$  such that $A'\ll D^2$ and $B'\ll D^3$. But then it follows from 
Conjecture \ref{con} that there exists constants $C>0$ and $\ve>0$ such that any  
$(x,y)\in E_D^*(\Z)$ satisfies $x<\exp(C N^{1-\ve})$, 
if $D\in \D(N)$. That 
Theorem~\ref{theorem:mainpartt} follows from 
Theorem~\ref{theorem:partialtors}
 is now obvious.

Since Lemma~\ref{lemma:smallgcd} will be enough to deal with points with small $\gcd(x,D)$, we will focus our attention on the case when $\gcd(x,D)$ is large.
\begin{prop}\label{prop:partialgcd}
Let $E_D$ be given by \eqref{eq:model2}, for  $A,B\in\Z$ such that $A^2-4B\notin\Q^2$.
Let $\kappa>0$. Then 
\[
\#\left\{D\in\D^+(N):
\begin{array}{l}
\gcd(x,D)\geq N(\log N)^{-\kappa}\text{ and }xB\notin\Q^2\\
\text{for some }(x,y)\in E_D^*(\Z)\end{array}\right\}
\ll N(\log N)^{-\frac{1}{8}}\log\log N,
\]
where implied constant depends at most on $A$, $B$  and $\kappa$.
\end{prop}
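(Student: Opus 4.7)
The plan is to adapt the 2-descent argument of Section~\ref{s:7} to the partial 2-torsion setting, exploiting the quadratic extension $K=\Q(\sqrt{A^2-4B})$ over which $E_D$ acquires full 2-torsion. Given $(x,y)\in E_D^*(\Z)$ with $g=\gcd(x,D)\geq N(\log N)^{-\kappa}$ and $xB\notin\Q^2$, I set $D=g\tilde D$, $x=g\tilde x$, $y=g^2\tilde y$, which is legitimate because $D$ is square-free. The defining equation becomes $g\tilde y^2=\tilde x(\tilde x^2+A\tilde D\tilde x+B\tilde D^2)$ with $\gcd(\tilde x,\tilde D)=1$, and since the gcd of the two right-hand factors divides $B$, I can write $\tilde x=G_1y_1^2$ and $\tilde x^2+A\tilde D\tilde x+B\tilde D^2=G_2y_2^2$ with $G_1,G_2$ square-free, $G_1G_2=g\delta^2$, and $\delta=\gcd(G_1,G_2)\mid B$. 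Sign issues are handled separately: if $B<0$, or if one of the $G_i$ is negative, the integral points lie on a bounded real arc and contribute only $O((\log N)^{O(1)})$ in the spirit of Lemma~\ref{lemma:negativepoints}.

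At generic primes $p\nmid 2B(A^2-4B)\tilde D$ one reads off the Selmer-type local conditions $p\mid G_1\Longrightarrow\leg{BG_2}{p}=1$ and $p\mid G_2\Longrightarrow\leg{A^2-4B}{p}=1$, the second being the splitting condition for $p$ in $K$. To reach $(\log N)^{-1/8}$ rather than the weaker bound these two characters alone afford, I would augment them with the descent over $K$: writing $\tilde x-\alpha\tilde D=\xi\eta^2$ for $\alpha$ a root of $t^2+At+B$, each split prime $p\mid G_2$ records a choice of $\OO_K$-prime $\fp$ above $p$ dividing $\xi$, and the $K$-Selmer condition at $\fp$ yields, via $\OO_K/\fp\cong\F_p$, an additional Legendre symbol condition on the remaining variables. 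The combined local factor at such $p$ takes the form $\tfrac14(1+\chi_K(p))(1+\chi'(p))$ for a quadratic character $\chi'$ depending on $G_1,\delta,\tilde D$, and an analogous secondary descent on the $G_1$-side is deployed to keep the weights $f_i$ uniform across all indices.

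Writing $n_1,n_2$ for the parts of $G_1,G_2$ coprime to $2B(A^2-4B)$ and expanding each $\tfrac12(1+\chi)$ as a divisor sum produces partitions of $n_1$ and $n_2$ into several coprime factors, recasting the count in the form required by Theorem~\ref{theorem:charsum}. I expect parameters $\lambda=\tfrac18$ and maximum unlinked size $M=8$, so that the error term in that theorem is $N(\log N)^{-1/8+\epsilon}/\tilde D$. The hypothesis $xB\notin\Q^2$ corresponds, via the descent dictionary, to avoiding the Selmer class of the 2-torsion point $(0,0)$; it is encoded as one of the exclusions~\ref{prop:one} in Theorem~\ref{theorem:charsum}, whose role is precisely to rule out the unique maximal unlinked admissible set and thereby force the main term to vanish. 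Summing the error over square-free $\tilde D\leq(\log N)^\kappa$ coprime to $2B(A^2-4B)$ introduces the $\log\log N$ factor via $\sum_{\tilde D}1/\tilde D\ll\log\log N$. The main obstacle will be arranging the combined $\Q$- and $K$-descents so that $\lambda=\tfrac18$ holds uniformly, and then verifying that every maximal unlinked set either violates~\ref{prop:single} or~\ref{prop:double} through the non-triviality of $\chi_K$ or $\chi'$, or else lies in the exclusion dictated by $xB\notin\Q^2$; this requires careful tracking of the fields $K_i(\sqrt{\alpha_i})$ attached to each $\chi_i$ and their mutual intersections via quadratic reciprocity.
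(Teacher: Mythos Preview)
Your setup---the factorisation over $\Q$, the handling of signs via a compact-component argument, and the extraction of an additional local condition at split primes $p\mid G_2$ via a prime $\fp\mid p$ of $K$---matches the paper closely. The gap is in how you apply Theorem~\ref{theorem:charsum}.

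You target $\lambda=\tfrac18$ and $M=8$ with a vanishing main term. To reach $\lambda=\tfrac18$ you would need three independent binary conditions at each prime dividing $n_1$ and $n_2$; at $p\mid n_2$ you produce at most two (splitting in $K$, plus the $\fp$-condition), and at $p\mid n_1$ there is only one, namely $\bigl(\tfrac{B\delta G_2}{p}\bigr)=1$. The ``analogous secondary descent on the $G_1$-side'' you invoke to manufacture a further condition there is not described and has no evident source in this problem. Without it, the $n_1$-indices carry $f_i(p)=\tfrac12$, forcing $\lambda\geq\tfrac12$, and with your proposed $M=8$ the error term of Theorem~\ref{theorem:charsum} becomes useless.

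The paper avoids this entirely by taking only four indices $\cI=\{10,12,20,21\}$, with $\lambda=\tfrac12$ and $M=2$, so the error term is already $O\bigl(N\tilde D^{-1}(\log N)^{-1/2+\epsilon}\bigr)$. The device you are missing is that the conditions ``$p$ splits in $K$'' and ``$\bigl(\tfrac{B}{p}\bigr)=1$'' are \emph{not} expanded into characters but are absorbed into the supports of the weight functions $f_{20}$ and $f_{21}$, while the $K$-character $\bigl(\tfrac{L/\Q}{\cdot}\bigr)$ is placed inside $\chi_{21}$. Consequently the main term in Theorem~\ref{theorem:charsum} does \emph{not} vanish: three of the four maximal unlinked pairs $\{10,20\},\{12,21\},\{20,21\}$ survive the admissibility checks, and their contributions are bounded directly by Lemma~\ref{lemma:Shiu}. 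It is precisely the thin supports of $f_{20},f_{21}$ (on primes of density $\tfrac12$ and $\tfrac14$) that make these surviving sums $O\bigl(N\tilde D^{-1}(\log N)^{-1/8}\bigr)$. The exclusion you correctly identify with $xB\notin\Q^2$ removes only the single pair $\{10,12\}$, whose contribution would otherwise be of order $N\tilde D^{-1}(\log N)^{1/2}$.
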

Moving to the two-division field $K\coloneqq\Q(\sqrt{A^2-4B})$ of $E_D$, note that 
we can rewrite the equation as
\[y^2=x(x-\alpha D)(x-\overline{\alpha} D),\]
where 
\[\alpha\coloneqq\frac{-A+ \sqrt{A^2-4B}}{2}\quad \text{ and }\quad
\overline{\alpha}\coloneqq\frac{-A-\sqrt{A^2-4B}}{2}.\]
Over $\R$, the curve $E_D$ has two connected components if $A^2-4B>0$ and one if $A^2-4B<0$. 
If $A^2-4B<0$, the value of  $x^2+ADx+BD^2$ is always positive and it follows that also $x>0$. If 
 $A^2-4B>0$, on the other hand,  then we must have  $x>-\max\{|\alpha|,|\bar\alpha|\}D$ for any real point on $E_D$.

Suppose $(x,y)\in E_D^*(\Z)$ and let $g\coloneqq\gcd(x,D)$. We shall follow the opening steps in the proof of Proposition \ref{prop:fulltwolarge}.
Write $x=g\tilde{x}$ and $D=g\tilde{D}$, so that $y=g^2\tilde{y}$ for some integer $\tilde{y}$, as before.
Then substituting this  back into the equation, we obtain
\[g\tilde{y}^2=
\tilde{x}(\tilde{x}^2+A\tilde{D}\tilde{x}+B\tilde{D}^2)
=\tilde{x}(\tilde{x}-\alpha \tilde{D})(\tilde{x}-\overline{\alpha}\tilde{D}).\]
Observe that since $\gcd(\tilde{x},\tilde{D})=1$, we have
\[\delta\coloneqq\gcd(\tilde{x},\tilde{x}^2+A\tilde{D}\tilde{x}+B\tilde{D}^2)\mid B.
\]
Thus  we can factor $\tilde{x}$ and $\tilde{x}^2+A\tilde{D}\tilde{x}+B\tilde{D}^2$ over $\Q$ as
\begin{align}
\tilde{x}&=g_1\delta y_1^2\label{eq:partial1}
\\
(\tilde{x}-\alpha \tilde{D})(\tilde{x}-\overline{\alpha}\tilde{D})=\tilde{x}^2+A\tilde{D}\tilde{x}+B\tilde{D}^2&=g_2\delta y_2^2,\label{eq:partial2}
\end{align}
where $g_1,g_2, y_1,y_2$ are integers such that $g_1g_2=g$ and $y_1y_2=\tilde{y}$.
Substituting~\eqref{eq:partial1} into~\eqref{eq:partial2}, we obtain
\begin{equation}\label{eq:hades}
(g_1\delta y_1^2)^2+A\tilde{D}g_1\delta y_1^2+B\tilde{D}^2=g_2\delta y_2^2.
\end{equation}

In the next result,  which is an analogue of Lemma \ref{lemma:negativepoints}, 
we bound the number of integral points that lie in the compact component when there are two connected components.
\begin{lemma}\label{lemma:partialnegative}
Fix $\kappa>0$. Let $A,B\in \ZZ$ such that $A^2-4B\notin\Q^2$ and $A^2-4B>0$. Then
\[\sum_{D\in\D^+(N)}\#\left\{(x,y)\in E_D^*(\Z):
\begin{array}{l}
x<\max\{|\alpha|,|\overline{\alpha}|\}D\text{ and }\\
\gcd(x,D)\geq N(\log N)^{-\kappa}
\end{array}\right\}\ll (\log N)^{2\kappa},\]
where implied constant depends at most on $A,B$ and $\kappa$.
\end{lemma}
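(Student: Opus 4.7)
The plan is to mirror the proof of Lemma~\ref{lemma:negativepoints}. Writing $g=\gcd(x,D)$, $x=g\tilde{x}$, $D=g\tilde{D}$ as in the passage leading up to~\eqref{eq:hades}, the hypothesis $g\geq N(\log N)^{-\kappa}$ combined with $D\leq N$ immediately gives $\tilde{D}\leq (\log N)^{\kappa}$. The key remaining input is to deduce from the inequality $x<\max\{|\alpha|,|\overline{\alpha}|\}D$ the matching bound $|\tilde{x}|\ll (\log N)^{\kappa}$.

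For this I will use the shape of the real locus. Since $A^2-4B>0$, the three numbers $0,\alpha D,\overline{\alpha}D$ are distinct reals, say $r_1<r_2<r_3$, and $E_D(\mathbb{R})=\{(x,y):x\in [r_1,r_2]\cup [r_3,\infty)\}$. All three $r_i$ satisfy $|r_i|\leq \max\{|\alpha|,|\overline{\alpha}|\}D$, so in particular $r_3\leq \max\{|\alpha|,|\overline{\alpha}|\}D$. Combining this with the existence of a real point and with the hypothesis $x<\max\{|\alpha|,|\overline{\alpha}|\}D$ forces
\[
-\max\{|\alpha|,|\overline{\alpha}|\}D\leq r_1\leq x<\max\{|\alpha|,|\overline{\alpha}|\}D,
\]
that is $|x|\ll D$, and hence $|\tilde{x}|\ll \tilde{D}\ll (\log N)^{\kappa}$, as required.

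With both $|\tilde{x}|,\tilde{D}\ll (\log N)^{\kappa}$, the number of admissible pairs $(\tilde{x},\tilde{D})$ is $O((\log N)^{2\kappa})$. To conclude, I will show that each pair gives rise to $O(1)$ triples $(x,y,D)$ in the sum. The identity $g\tilde{y}^2=\tilde{x}(\tilde{x}^2+A\tilde{D}\tilde{x}+B\tilde{D}^2)$, together with the facts that $g$ divides the square-free positive integer $D\in\D^+$ (hence is itself positive and square-free) and that $y\neq 0$ makes the right-hand side strictly positive, forces $g$ to coincide with the positive square-free part of $\tilde{x}(\tilde{x}^2+A\tilde{D}\tilde{x}+B\tilde{D}^2)$. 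Thus $g$ is determined by $(\tilde{x},\tilde{D})$, and with it $D=g\tilde{D}$ and $x=g\tilde{x}$ are determined; $y$ is then determined up to sign. Summing over admissible pairs yields the claimed $O((\log N)^{2\kappa})$ bound.

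I do not foresee a substantial obstacle: the argument is a direct analogue of Lemma~\ref{lemma:negativepoints}, the only modest difference being that in the partial-torsion model the cubic factorises as (linear)$\times$(quadratic) rather than into three linear pieces, which is immaterial since I count at the level of $(\tilde{x},\tilde{D})$ rather than with the individual factors. The one point to handle with a little care is the final recovery of $g$, where it is essential that $D\in\D^+$ is square-free and that $y\neq 0$; otherwise the argument is purely geometric plus a trivial divisor count.
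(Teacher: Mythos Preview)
Your proposal is correct and follows essentially the same approach as the paper: bound $\tilde{D}\ll(\log N)^{\kappa}$ from the gcd hypothesis, use the shape of the real locus to deduce $|\tilde{x}|\ll\tilde{D}$, count the $O((\log N)^{2\kappa})$ pairs $(\tilde{x},\tilde{D})$, and recover the remaining data from each pair. The only cosmetic difference is that the paper recovers $g_1,g_2$ separately from~\eqref{eq:partial1} and~\eqref{eq:partial2} (up to the $O(1)$ choices of $\delta\mid B$), whereas you recover $g$ directly as the square-free part of $\tilde{x}(\tilde{x}^2+A\tilde{D}\tilde{x}+B\tilde{D}^2)$; both are valid and equivalent.
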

\begin{proof}
For such points $(x,y)\in E_D^*(\Z)$ with $D=\gcd(x,D)\tilde{D}$, we have  
$$
|\tilde{x}|<\max\{|\alpha|,|\overline{\alpha}|\}\tilde{D}\ll(\log N)^{\kappa}.
$$
There are $\ll(\log N)^{2\kappa}$ choices of $\tilde{x}$ and $\tilde{D}$, which can be used to recover square-free $g_1$ and $g_2$ by~\eqref{eq:partial1} and~\eqref{eq:partial2}, up to $\delta$, since $y_1,y_2\neq 0$ for non-torsion points.
\end{proof}

When $A^2-4B<0$, the value of  $x^2+ADx+BD^2$ is always positive. Thus  $g_2$ and hence also $g_1$ must be positive. 
If $A^2-4B>0$, on the other hand, it is possible that both $g_1,g_2$ are negative.  This happens precisely when $x=g\tilde{x}=g_2g_1^2\delta y_1^2$ is negative, but such points are handled by Lemma~\ref{lemma:partialnegative}. Thus, in what follows,  we can restrict our attention to the case in which   $g_1$ and $g_2$ are both positive.

We collect from \eqref{eq:hades} the local  solubility conditions at the primes  dividing $g_1$ and $g_2$, but not  $B(A^2-4B)$. These may be written
\[
\begin{cases}\hfil
\leg{B\delta g_2}{p}=1&\text{if }p\mid g_1\text{ and }p\nmid B,\\
\leg{A^2-4B}{p}=1&\text{if }p\mid g_2\text{ and }p\nmid A^2-4B.
\end{cases}
\]
In the above we have used that $p\mid g_1g_2$ implies $p\nmid \tilde{D}$, because $D=g_1g_2\tilde{D}$ is square-free.

For each $p\mid g_2$ and $p\nmid B(A^2-4B)$, the condition $\leg{A^2-4B}{p}=1$ implies that $p$ splits in $K/\Q$.
From~\eqref{eq:partial2}, we see that there is a prime $ \fp$ in $\O_K$ above $p$ such that
$\tilde{x}\equiv \alpha\tilde{D}\bmod  \fp$,
so substituting this into~\eqref{eq:partial1} yields
$g_1\delta y_1^2\equiv \alpha\tilde{D}\bmod  \fp$, whence
\[\leg{g_1\delta\alpha\tilde{D}}{\fp}=1.\]
Since $p$ splits in $K/\Q$, we have
\[\leg{g_1\delta\tilde{D}}{\fp}=\leg{g_1\delta\tilde{D}}{p}\]
and
\[\leg{B}{p}=\leg{B}{\fp}=\leg{\alpha\overline{\alpha}}{\fp}=\leg{g_1\delta\alpha \tilde{D}}{\fp}\leg{g_1\delta\overline{\alpha}\tilde{D}}{\fp}.\]
Therefore, if $\leg{B}{p}=-1$, then $\tilde{x}\equiv \alpha\tilde{D}\bmod \fp$ is automatically satisfied by one of the two primes above $p$.
If instead $\leg{B}{p}=1$, then it is clear that $\leg{\alpha}{\fp}$ does not depend on the choice of prime above $p$. 
Define $L\coloneqq K(\sqrt{\alpha})$. For any prime $p$ that splits completely in $K(\sqrt{B})/\Q$, we define
\[\leg{L/\Q}{p}\coloneqq \leg{\alpha}{\fp},\]
viewed as an Artin symbol in $\Gal(L/\Q)$, taking values in $\Gal(L/K(\sqrt{B}))\cong\F_2$.

Next, we write 
\[g_1=n_1\cdot \gamma_1\quad \text{ and }\quad g_2=n_2\cdot \gamma_2,\]
where $\gamma_1=\gcd(g_1,2B(A^2-4B))$ and $\gamma_2=\gcd(g_2,2B(A^2-4B))$.
Let 
\[
\mathcal{S}_{d}\coloneqq\left\{n\in\D^+:2\nmid n,~\leg{d}{p}=1 \text{ for all }p\mid n\right\}.
\] Also define
\[\omega_{d}(n)\coloneqq\#\left\{p\mid n:\leg{d}{p}=1\right\}.\]
Fix a choice of  $\delta\mid B$, $\gamma_1\gamma_2\mid 2B(A^2-4B)$, and $\tilde{D}$.
Define 
\[
R_{21}=\tilde{D}\delta \gamma_1\quad \text{ and }\quad R_{12}=B\delta \gamma_2.
\]
We would like  to count the number of pairs $(n_1,n_2)$ such that $n_1n_2\gamma_1\gamma_2\tilde{D}\in\D^+(N)$, with $\gcd(n_1n_2,2B(A^2-4B))=1$ and $n_2\in \mathcal{S}_{A^2-4B}$, and such that 
\begin{equation}\label{eq:localpartial}
\begin{cases}\hfil
\leg{R_{12}n_2}{p}=1&\text{if }p\mid n_1,\\
\leg{L/\Q}{p}\leg{R_{21}n_1}{p}=1 &\text{if }p\mid n_2\text{ and }\leg{B}{p}=1.\\
\end{cases}\end{equation}
For $n_1\in\D^+$ and $n_2\in\mathcal{S}_{A^2-4B}$, we note that the expression
\begin{equation}\label{eq:indicatorpartial}
\prod_{p\mid n_1}\frac{1}{2}\left(1+\leg{R_{12}n_2}{p}\right)\prod_{\substack{p\mid n_2\\ \leg{B}{p}=1}}\frac{1}{2}\left(1+\leg{L/\Q}{p}\leg{R_{21}n_1}{p}\right)
\end{equation}
is equal to $1$ if~\eqref{eq:localpartial} is satisfied by $n_1, n_2$, and $0$ otherwise.

\begin{lemma}\label{lemma:partialgenericgcd}
Let $\kappa>0$.
Fix positive integers $\delta\mid B$, $\gamma_1\gamma_2\mid 2B(A^2-4B)$. Take a square-free positive integer $\tilde{D}\leq (\log N)^{\kappa}$ that is coprime to $\gamma_1\gamma_2$.
We have
\[
\#\left\{D\in\D^+(N):\begin{array}{l}
 n_1,n_2\geq 1,\ n_1n_2\gamma_1\gamma_2\tilde{D}=D,\
 \eqref{eq:localpartial}\text{ holds} \\
 \gcd(n_1n_2,2B(A^2-4B))=1\\
 n_2\neq 1\text{ if }R_{12}\in\Q^2
 \end{array}\right\}
\ll \frac{N}{\tilde{D}}(\log N)^{-\frac{1}{8}},
\]
where implied constant depends at most on $A,B$ and $\kappa$.
\end{lemma}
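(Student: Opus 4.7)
The plan is to apply Theorem~\ref{theorem:charsum} after expanding the indicator \eqref{eq:indicatorpartial} into a sum of Jacobi symbols, in the spirit of Lemma~\ref{lemma:genericgcd} and Lemma~\ref{lemma:cyclic4}. Since the second factor of \eqref{eq:indicatorpartial} only concerns primes $p \mid n_2$ with $\leg{B}{p}=1$, I would first split $n_2 = n_2^+ n_2^-$: primes of $n_2^+$ split completely in the biquadratic extension $K(\sqrt{B})/\Q$ (equivalently $\leg{B}{p}=1$), while primes of $n_2^-$ split in $K$ but are inert in $K(\sqrt B)/K$. The Artin symbol $\leg{L/\Q}{p}$ is then well-defined precisely on primes of $n_2^+$. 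Expanding each factor $\tfrac{1}{2}(1+\chi(p))$ in \eqref{eq:indicatorpartial} multiplicatively and regrouping yields a Jacobi symbol sum
\[
R(N)=\sum_{(n_\u)}\prod_{\u\in\cI}f_\u(n_\u)\chi_\u(n_\u)\prod_{\u,\v\in\cI}\leg{n_\u}{n_\v}^{\Phi(\u,\v)}
\]
of the form required by Theorem~\ref{theorem:charsum}, where $\cI$ indexes the resulting pieces, $\chi_\u$ encodes the characters $\leg{R_{12}}{\cdot}$, $\leg{R_{21}}{\cdot}$ and $\leg{L/\Q}{\cdot}$ via the data $K_\u\in\{\Q,K,K(\sqrt B)\}$ and $\alpha_\u$, and the weights $f_\u$ are $2^{-\omega}$-type factors restricted to the relevant congruence conditions via the modulus $c_\u$.

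To achieve the exponent $-1/8$ in the error $(M-1)\lambda-1+\epsilon$ of Theorem~\ref{theorem:charsum}, I aim for $M=8$ and $\lambda=1/8$. Since the naive five-variable decomposition (two pieces of $n_1$, two of $n_2^+$, one of $n_2^-$) produces too small an $M$, I would refine it by exploiting the identity $\tfrac{1}{2}(1+\chi\chi')=\tfrac{1}{4}(1+\chi)(1+\chi')+\tfrac{1}{4}(1-\chi)(1-\chi')$ applied to conditions that are a product of two characters, notably $\leg{R_{12}n_2}{p}=\leg{R_{12}n_2^+}{p}\leg{n_2^-}{p}$ in the condition on $n_1$. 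This doubles the number of pieces and brings the structure in line with Lemma~\ref{lemma:cyclic4}. With this refinement, each weight $f_\u$ should average to $1/8$ at the relevant primes (combining a $1/4$ factor from a double expansion with the density $1/4$ of primes splitting in $K(\sqrt B)$, or analogous combinations), and a careful link-analysis should give $M=8$. The exclusion of $n_2=1$ when $R_{12}\in\Q^2$ would be encoded via $\cJ_1=\{\text{indices associated with }n_1\}$, so that the condition $\prod_{i\notin\cJ_1}D_i\neq 1$ translates to $n_2\neq 1$.

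The main obstacle is the final combinatorial verification: enumerating the maximal unlinked sets of the refined $\Phi$ and checking via properties~\ref{prop:single} and~\ref{prop:double} of Theorem~\ref{theorem:charsum} that every admissible maximal unlinked set is either inadmissible (because some $\chi_\u$ is non-trivial, forcing a character relation that fails in $K$, $K(\sqrt B)$ or $L$) or else coincides with the degenerate set $\cJ_1$ corresponding to $n_2=1$, which is admissible precisely when $R_{12}\in\Q^2$ and is then excluded by $\cJ_1$. The check is parallel to the arguments in Lemmas~\ref{lemma:genericgcd} and~\ref{lemma:cyclic4}, but is delicate here because the characters come from the tower $\Q\subset K\subset K(\sqrt B)\subset L$ of abelian extensions rather than from $\Q$ alone, so conditions \ref{prop:single} and \ref{prop:double} must be tracked across several field extensions. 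Once this is in hand, Theorem~\ref{theorem:charsum} yields $R(N)\ll N(\log N)^{-1/8+\epsilon}/\tilde D$, and the $\epsilon$-loss is absorbed by routine sharpening.
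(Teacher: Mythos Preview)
Your approach differs substantially from the paper's, and contains a genuine gap.

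The paper uses only a four-variable decomposition $\cI=\{10,12,20,21\}$ with $\lambda=\tfrac12$ and $M=2$. The error term from Theorem~\ref{theorem:charsum} is then $O\bigl(\tfrac{N}{\tilde D}(\log N)^{-1/2+\epsilon}\bigr)$, which is harmless. The crucial point is that the main term does \emph{not} vanish: three admissible maximal unlinked pairs $\{10,20\}$, $\{12,21\}$, $\{20,21\}$ survive (only $\{10,12\}$ is killed by the hypothesis $n_2\neq 1$ when $R_{12}\in\Q^2$). These surviving sums are bounded directly by Lemma~\ref{lemma:Shiu}. For instance, the $\{10,20\}$-contribution is $\sum_{n_{10}n_{20}\leq N/\tilde D} 2^{-\omega(n_{10})-\omega_B(n_{20})}$ with $n_{20}\in\cS_{A^2-4B}$; the relevant multiplicative weight averages to $\tfrac78$ over primes, and Shiu's bound yields $\tfrac{N}{\tilde D}(\log N)^{-1/8}$ exactly, with no $\epsilon$-loss. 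The other two pairs give smaller exponents.

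Your plan to push to $M=8$, $\lambda=\tfrac18$ runs into two obstacles. First, achieving $\lambda=\tfrac18$ on the $n_1$-pieces is not possible as you describe: $n_1$ carries a single local condition and no splitting restriction, so any expansion gives weight $2^{-k}$ with $k$ the number of characters, and there is no natural way to reach $\tfrac18$ here. The identity $\tfrac12(1+\chi\chi')=\tfrac14(1+\chi)(1+\chi')+\tfrac14(1-\chi)(1-\chi')$ is a \emph{sum} of two products, not a refinement into more independent variables; it does not fit the single-sum framework of Theorem~\ref{theorem:charsum} and does not lower $\lambda$. Second, even granting $M=8$ and $\lambda=\tfrac18$, the error in Theorem~\ref{theorem:charsum} would be $(\log N)^{-1/8+\epsilon}$, and the $\epsilon$ is not removable by ``routine sharpening'': it originates in the treatment of small variables in \S6.5 and is intrinsic to that argument. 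The lemma requires the clean exponent $-\tfrac18$, which the paper obtains precisely because the dominant contribution comes from the \emph{main term}, estimated via Shiu, rather than from the error term.

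The insight you are missing is that Theorem~\ref{theorem:charsum} is being used here only to isolate the diagonal contributions, not to annihilate them; the actual saving of $(\log N)^{-1/8}$ comes from the density of primes satisfying the conditions encoded in $f_{20}$, exploited through Lemma~\ref{lemma:Shiu}.
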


\begin{proof}
Let $R(N)$ denote the quantity that is to be estimated. 
Summing~\eqref{eq:indicatorpartial} over $(n_1,n_2)$ such that $n_2\neq 1$ if $R_{12}\in\Q^2$, and then expanding the sum, we easily arrive at the expression
\begin{align*}
R(N)
&=
\sum_{\substack{n_{10},n_{12},n_{20},n_{21}\\n_{20}n_{21}\in\mathcal{S}_{A^2-4B}\\n_{21}\in\mathcal{S}_{B}}}\frac{1}{2^{\omega(n_{10}n_{12})+ \omega_B(n_{20}n_{21})}}\leg{R_{12}n_{20}n_{21}}{n_{12}}\leg{L/\Q}{n_{21}}\leg{R_{21}n_{10}n_{12}}{n_{21}},
\end{align*}
where the sum is over all positive integers $n_{10},n_{12},n_{20},n_{21}$ such that 
$$
n_{10}n_{12}n_{20}n_{21}\gamma_1\gamma_2\tilde{D}\in\D^+(N),
$$ 
with $\gcd(n_{10}n_{12}n_{20}n_{21},2B(A^2-4B))=1$, $n_{20}\in \mathcal{S}_{A^2-4B}$, $n_{21}\in\mathcal{S}_B\cap\mathcal{S}_{A^2-4B}$, and additionally $n_{20}n_{21}\neq 1$ if $R_{12}\in\Q^2$.

We plan to apply Theorem~\ref{theorem:charsum} to estimate $R(N)$. In order to facilitate this we  define
$$
R_{10}=R_{20}=1.
$$
Then, we shall  apply Theorem~\ref{theorem:charsum} with the following parameters. Firstly, we shall take 
\[\cI=\{10,12,20,21\},\quad \cJ=
\begin{cases}
\hfil \{10,12\}&\text{if }R_{12}\text{ is a square,}\\
\hfil \varnothing &\text{ otherwise},
\end{cases}\]
and 
\[\Phi(kl,ij)=\begin{cases}
 1&\text{if }k\neq i\text{ and }j\neq 0,\\
 0&\text{otherwise}.
\end{cases}
\]
The relevant arithmetic functions are 
\begin{align*}
f_{10}(n)=f_{12}(n)&=
\begin{cases}
2^{-\omega(n)} &  \text{ if $\gcd(n,2B(A^2-4B))=1$,}\\
0 &  \text{ otherwise,}
\end{cases}\\
f_{20}(n)&=
\begin{cases}
2^{-\omega_B(n)}&\text{if }n\in\mathcal{S}_{A^2-4B} \text{ and } \gcd(n,2B(A^2-4B))=1,\\
0&\text{otherwise,}
\end{cases}\\
f_{21}(n)&=\begin{cases}
2^{-\omega_B(n)}&\text{if }n\in\mathcal{S}_B\cap \mathcal{S}_{A^2-4B} \text{ and } \gcd(n,2B(A^2-4B))=1,\\
0&\text{otherwise.}
\end{cases}
\end{align*}
The relevant charaters are 
\begin{align*}
\chi_{10}(\cdot)=
\chi_{20}(\cdot)=1, \quad 
\chi_{12}(\cdot)=\left(\frac{R_{12}}{\cdot}\right), \quad 
\chi_{21}(\cdot)=\left(
\frac{L/\QQ}{\cdot}\right)
\left(\frac{R_{21}}{\cdot}\right),
\end{align*}
so that $K_{10}=K_{20}=K_{12}=\Q$, $K_{21}=K$, $\alpha_{10}=\alpha_{20}=1$, $\alpha_{12}=R_{12}$ and $\alpha_{21}=\alpha R_{21}$,
But then we may take 
$\lambda=\frac{1}{2}$ and 
$$
c_{10}=c_{12}=1,\quad c_{20}=c_{21}=2B(A^2-4B).
$$
We easily check that the  maximal unlinked sets are
\[\{10,12\},\{10,20\},\{12,21\},\{20,21\}.\]
Taking $M=2$, we see that the error term in Theorem~\ref{theorem:charsum} becomes $O(\frac{N}{\tilde{D}}(\log N)^{-\frac{1}{2}+\epsilon})$.

The sum in the main term in Theorem~\ref{theorem:charsum} is taken over all $\mathcal{U}$ satisfying \ref{prop:one}--\ref{prop:double}. Thus  we need to check which maximal unlinked sets are admissible (so that they satisfy \ref{prop:single} and \ref{prop:double}) and yet are not contained in $\mathcal{J}$.
Since $c_{12}=1$, it follows from \ref{prop:single} that 
the  set $\{10,12\}$ is only admissible if 
$R_{12}$ is a square. 
Therefore, the sum in the main term is only over $\cU\in\{\{10,20\},\{12,21\},\{20,21\}\}$. 
(Indeed, if $R_{12}$ is not a square then $\{10,12\}$ is not admissible and it shouldn’t appear in the main term;  alternatively, if $R_{12}$ is a square then $\{10,12\}$ is  admissible  but  contained in 
$\mathcal{J}$.)
Taking $\ve=\frac{5}{8}$, we may
 deduce that 
 $$
R(N)=M(N)+
O\left(\frac{N}{\tilde{D}}(\log N)^{-\frac{1}{8}}
\right),
$$
where
\begin{align*}
M(N)&\ll \sum_{\cU}\sum_{(D_i)_{i\in\cU}}
 \prod_{i\in\cU} f_i(D_i)\\
&\ll \sum_{\substack{n_{10},n_{20}\\ n_{10}n_{20}\leq N/\tilde{D}\\ n_{20}\in\mathcal{S}_{A^2-4B}}}\frac{1}{2^{\omega(n_{10})+\omega_B(n_{20})}}
+\hspace{-0.2cm}
\sum_{\substack{n_{12},n_{21}\\ n_{12}n_{21}\leq N/\tilde{D}\\ n_{21}\in\mathcal{S}_{A^2-4B}\cap\mathcal{S}_B}}
\hspace{-0.1cm}
\frac{1}{2^{\omega(n_{12})+\omega_B(n_{21})}}
+
\hspace{-0.2cm}
\sum_{\substack{n_{20},n_{21}\\ n_{20}n_{21}\leq N/\tilde{D}\\ n_{20}n_{21}\in\mathcal{S}_{A^2-4B}\\ n_{21}\in\mathcal{S}_B}}\hspace{-0.1cm}
\frac{1}{2^{\omega_B(n_{20}n_{21})}}.
\end{align*}
On appealing to Lemma \ref{lemma:Shiu}, we easily deduce that 
\begin{align*}
M(N)&\ll
 \frac{N}{\tilde{D}}\left((\log N)^{-\frac{1}{8}}+(\log N)^{-\frac{1}{4}}+(\log N)^{-\frac{3}{8}}\right)
\ll \frac{N}{\tilde{D}}(\log N)^{-\frac{1}{8}},
\end{align*}
which completes the proof of the lemma.
\end{proof}

Let us comment briefly on the  condition 
 $n_2\neq 1$  if $R_{12}\in\Q^2$, appearing in the counting function $R(N)$ in the proof of Lemma 
\ref{lemma:partialgenericgcd}. Dropping this condition would amount to taking  $\mathcal{J}$ to be the empty set in the proof. But then we would obtain a contribution from the admissible set $\{10,12\}$, which takes the shape
$
\sum_{n_{10}n_{12}\leq N/\tilde{D}} 2^{-\omega(n_{10}n_{12})}.
$
This sum has order $\frac{N}{\tilde{D}}\sqrt{\log N}$, which is much larger than the upper bound in 
Lemma~\ref{lemma:partialgenericgcd}.

\begin{proof}[Proof of Proposition~\ref{prop:partialgcd}]
Lemma~\ref{lemma:partialnegative} allows us to restrict to points $(x,y)\in E^*_D(\Z)$ that satisfy $x>0$. Next, we
sum the bound from Lemma~\ref{lemma:partialgenericgcd} over $\tilde{D} \leq(\log N)^{\kappa}$, and over all choices of $\delta, \gamma_1,\gamma_2$. 
The integral points $(x,y)\in E_D^*(\Z)$ that have not been handled satisfy $n_2= 1$ and $R_{12}\in\Q^2$, hence $Bx=Bgg_1\delta y_1^2=Bg_2\delta(g_1y_1)^2=R_{12}(g_1y_1)^2$ is a square. This proves Proposition~\ref{prop:partialgcd}.
\end{proof}

\subsection{Exceptional points}
To complete the proof of Theorem~\ref{theorem:partialtors}, it remains to deal with the set of exceptional points.
\begin{lemma}\label{lemma:p2excep}
Let $\kappa, \tau>0$. 
If $B<0$ or 
$B\in \QQ^2$, then 
\[
\sum_{D\in\D^+(N)}\#\left\{(x,y)\in E^*_D(\Z):
\gcd(x,D)\geq N(\log N)^{-\kappa}\text{ and }xB\in\Q^2\right\}
\ll (\log N)^{2\kappa}.
\]
If $B>0$ and $B\not\in \QQ^2$, then
\[
\sum_{D\in\D^+(N)}\#\left\{(x,y)\in E^*_D(\Z):
\begin{array}{l}
\gcd(x,D)\geq N(\log N)^{-\kappa}\\
xB\in\Q^2\\
x<\exp(N(\log N)^{-\kappa-\tau})
\end{array}
\right\}
\ll N(\log N)^{-\tau}(\log\log N)^{\frac{1}{2}}.
\]
The implied constants depend at most on $A,B$ and $\kappa$.
\end{lemma}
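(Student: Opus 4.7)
The plan is to split into three sub-cases determined by the sign and shape of $B$, in each of them exploiting $xB \in \Q^2$ together with the elliptic equation to translate the problem into a bounded Diophantine counting problem.

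Case $B<0$: here $xB \in \Q^2$ with $y \neq 0$ and $B<0$ forces $x<0$, and since $A^2-4B>0$ the curve $E_D(\R)$ has a bounded component, to which every such point must belong. Thus $|x| \leq |\overline{\alpha}|D$, and combined with $\gcd(x, D) \geq N(\log N)^{-\kappa}$ this forces both $|\tilde{x}|$ and $\tilde{D}$ to be $O((\log N)^\kappa)$, giving $O((\log N)^{2\kappa})$ admissible pairs $(\tilde{x}, \tilde{D})$. Each pair recovers the square-free parts $g_1, g_2$ arising in \eqref{eq:partial1}--\eqref{eq:partial2} up to $O(1)$ choices of $\delta \mid B$, and hence determines $(x, y, D)$, exactly as in Lemma~\ref{lemma:partialnegative}.

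Case $B = b^2 \in \Q^2$ with $B>0$: then $xB \in \Q^2$ forces $x = v^2$ for some nonzero integer $v$, and the equation becomes $w^2 = v^4 + ADv^2 + b^2 D^2$ with $w = y/v \in \Z$. Completing the square yields
\begin{equation*}
(2v^2 + AD - 2w)(2v^2 + AD + 2w) = (A^2 - 4B) D^2.
\end{equation*}
Since $D$ is square-free, $g \coloneqq \gcd(v, D) = \gcd(x, D) \geq N(\log N)^{-\kappa}$; writing $v = g\tilde{v}$, $D = g\tilde{D}$, $w = g\tilde{w}$ and dividing the preceding identity by $g^2$ gives the same factorisation with right-hand side $(A^2-4B)\tilde{D}^2$, where $\tilde{D} \leq (\log N)^\kappa$. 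This admits $O(d(\tilde{D}^2))$ factorisations, each forcing $g\tilde{v}^2 \ll (\log N)^{2\kappa}$ (since both factors are nonzero integers with bounded product), and for each such value there are $O((\log N)^\epsilon)$ choices of $(g, \tilde{v})$. Summing gives $O((\log N)^{\kappa + \epsilon})$, comfortably within the claimed $O((\log N)^{2\kappa})$.

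Case $B>0$, $B \notin \Q^2$: writing $B = b_1 b_2^2$ with $b_1 > 1$ square-free, $xB \in \Q^2$ forces $x = b_1 v^2$ and $y = b_1 v w$ with $w^2 = b_1 v^4 + ADv^2 + b_2^2 D^2$. Multiplying by $4b_1$ and completing the square yields the Pell-type equation
\begin{equation*}
U^2 - b_1 W^2 = (A^2 - 4B)D^2, \qquad U = 2 b_1 v^2 + AD, \quad W = 2w.
\end{equation*}
Extracting $g = \gcd(v, D)$ as before and dividing by $g^2$ gives a Pell equation of the same shape with modulus $(A^2-4B)\tilde{D}^2$ and $\tilde{D} \leq (\log N)^\kappa$. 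Since $b_1 > 0$ is a non-square, $\Q(\sqrt{b_1})$ has a fundamental unit $\varepsilon_0 > 1$, and the solutions of such a Pell equation fall into at most $O(d(\tilde{D}^2))$ orbits under $\varepsilon_0^{\Z}$. The assumption $x < \exp(N(\log N)^{-\kappa-\tau})$ forces $|U|/g \ll \exp(N(\log N)^{-\kappa-\tau})$, so each orbit contributes at most $O(N(\log N)^{-\kappa-\tau}/\log \varepsilon_0)$ admissible solutions. The hard part is the final summation in $\tilde{D}$: the naive bound $\sum_{\tilde{D} \leq (\log N)^\kappa} d(\tilde{D}^2) \ll (\log N)^\kappa (\log\log N)^2$ only yields $N(\log N)^{-\tau}(\log\log N)^2$, whereas the stated exponent is $\tfrac{1}{2}$; obtaining $(\log\log N)^{1/2}$ will require exploiting the arithmetic of Pell orbits more carefully, for instance by Cauchy--Schwarz together with a higher-moment estimate for the orbit count, or by discarding $\tilde{D}$ that contribute no genuine orbit at all.
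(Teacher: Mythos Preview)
Your treatment of the first two cases ($B<0$ and $B\in\Q^2$) is correct and in fact more elementary than the paper's: the paper works uniformly over $K=\Q(\sqrt{A^2-4B})$, writing $\tilde x-\alpha\tilde D=\mu\xi^2$ and descending to $\Q(\sqrt B)$, whereas you go straight to the factorisation over~$\Z$. (Small remarks: in the $B<0$ case the relevant bound is $|x|\le|\alpha|D$ rather than $|\overline\alpha|D$, and in the $B=b^2$ case the decomposition $m=g\tilde v^2$ with $g$ square-free is in fact \emph{unique}, so your $O((\log N)^\varepsilon)$ is really $O(1)$; neither affects the outcome.)

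In the third case ($B>0$, $B\notin\Q^2$) your reduction to the Pell equation $U^2-b_1W^2=(A^2-4B)\tilde D^2$ is correct, and so is your orbit count $O(3^{\omega(\tilde D)})$ times $O(N(\log N)^{-\kappa-\tau})$ solutions per orbit. The gap is in the final summation over~$\tilde D$. Your proposed remedies (Cauchy--Schwarz, higher moments of the orbit count) are not what is needed: the point is an \emph{arithmetic restriction} on~$\tilde D$ that you have overlooked. Reducing your identity $w^2=b_1v^4+ADv^2+b_2^2D^2$ modulo a prime $p\mid\tilde D$ with $p\nmid B$ (so $p\mid D$, $p\nmid v$ since $p\nmid g'=\gcd(v,D)$) gives $w^2\equiv b_1v^4\pmod p$, whence $\bigl(\tfrac{b_1}{p}\bigr)=1$, i.e.\ $\bigl(\tfrac{B}{p}\bigr)=1$. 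Thus $\tilde D$ is supported on primes of density $\tfrac12$, and Lemma~\ref{lemma:Shiu} (or Wirsing) yields
\[
\sum_{\substack{\tilde D\le(\log N)^\kappa\\ p\mid\tilde D\Rightarrow (\frac{B}{p})\neq -1}}3^{\omega(\tilde D)}\ll (\log N)^{\kappa}(\log\log N)^{1/2},
\]
which combined with the $N(\log N)^{-\kappa-\tau}$ factor gives exactly the claimed $N(\log N)^{-\tau}(\log\log N)^{1/2}$. This is precisely how the paper closes the argument.
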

\begin{proof}
Lemma~\ref{lemma:partialnegative} allows us to restrict to points $(x,y)\in E^*_D(\Z)$ that satisfy $x>0$, so suppose that $g_1,g_2$ are positive integers in the notation of \eqref{eq:partial2}.
Since $Bx=Bgg_1\delta y_1^2=Bg_2\delta(g_1y_1)^2$ is a square, we deduce that $g_2$ is the squarefree part of $B/\delta$ and $B> 0$. 

Recall that $\gcd(\tilde{x},\tilde{D})=1$. Hence
the greatest common divisor of the ideals $(\tilde{x}-\alpha \tilde{D})$ and 
$(\tilde{x}-\overline{\alpha}\tilde{D})$ must divide 
$2\sqrt{A^2-4B}$.
But then, in the light of  \eqref{eq:partial2}, we may write 
\[\tilde{x}-\alpha \tilde{D}=\mathfrak{a}\mathfrak{b}^2,\]
for some ideals $\mathfrak{a}\mid 2B\sqrt{A^2-4B}$  and $\mathfrak{b}$ of $\O_K$.
Fix a representative for each ideal class of $\O_K$. Take $\mathfrak{c}$ to be the representative for the ideal class of $\mathfrak{b}$. Then $\mathfrak{b}/\mathfrak{c}$ must be a principal fractional ideal.
Therefore we may take $\mu\in \O_K$ and $\xi\in K$ such that $\mu\O_K=\mathfrak{a}\mathfrak{c}^2$ and $\xi\O_K=\mathfrak{b}\mathfrak{c}^{-1}$.  Note that, given $A$ and $B$, there are only finitely many possible $\mu$.
Then
\begin{equation}\label{eq:firstp}\tilde{x}-\alpha \tilde{D}=\mu\xi^2.\end{equation}
Taking conjugates in $K/\Q$, we obtain
\begin{equation}\label{eq:secondp}\tilde{x}-\overline{\alpha }\tilde{D}=\overline{\mu}\overline{\xi}^2.\end{equation}
Putting this back into \eqref{eq:partial2}, we see that
 $\overline{\mu}\mu(\overline{\xi}\xi)^2=g_2\delta y_2^2\in B\cdot \Q^2$. 
Therefore $\Q(\sqrt{\overline{\mu}\mu})=\Q(\sqrt{B})$.
Taking the difference of \eqref{eq:firstp} and \eqref{eq:secondp}, we have
\[\overline{\mu}\overline{\xi}^2-\mu\xi^2=\alpha \tilde{D}-\overline{\alpha }\tilde{D}=\tilde{D}\sqrt{A^2-4B}.\]
Squaring gives
\[(\overline{\mu}\overline{\xi}^2+\mu\xi^2)^2-4\overline{\mu}\mu(\overline{\xi}\xi)^2=\tilde{D}^2(A^2-4B).\]
Let $\eta=|\overline{\mu}\overline{\xi}^2+\mu\xi^2|+2\sqrt{\overline{\mu}\mu}|\overline{\xi}\xi|$ and $\eta'=|\overline{\mu}\overline{\xi}^2+\mu\xi^2|-2\sqrt{\overline{\mu}\mu}|\overline{\xi}\xi|$, which are both in $\O_{\Q(\sqrt{B})}$ since $\overline{\mu}\overline{\xi}^2,\mu\xi^2\in\O_K$ implies that $\overline{\mu}\overline{\xi}^2+\mu\xi^2\in\Z$ and $\overline{\mu}\mu(\overline{\xi}\xi)^2\in\Z$. Given $\tilde{D}$ and some $\delta\mid B$, each $(\eta,\eta')$ can only correspond to at most two pairs of $(x,D)$. Indeed, observe that $(\eta-\eta')^2=16\overline{\mu}\mu(\overline{\xi}\xi)^2=16g_2\delta y_2^2$, so \eqref{eq:partial2} allows us to recover $\tilde{x}$ as one of the two solutions to the quadratic equation, and hence $g_1$ from \eqref{eq:partial1}. Therefore it suffices to bound the number of $\eta$ that satisfy the  equation
\[\eta\eta'=\tilde{D}^2(A^2-4B)\]
over $\Q(\sqrt{B})$.

First suppose $B>0$ is a square and note that $\tau(d^2)\leq 3^{\omega(d)}$, for any $d\in \mathcal{D}^+$. We deduce that $\eta\in\Z$ divides $(A-4B^2)\tilde{D}^2$, so there are $O( 3^{\omega(\tilde{D})})$ choices for  $\eta$. Summing over all $\tilde{D}\leq (\log N)^{\kappa}$, the contribution is bounded by $\ll (\log N)^{2\kappa}$, by Lemma \ref{lemma:Shiu}.

Suppose next that $B>0$ is not a square. In this case $\eta'$ is the conjugate of $\eta$ in $\Q(\sqrt{B})$. 
There are $O (3^{\omega(\tilde{D})})$  choices of ideal $\mathfrak{d}=\eta\O_{\Q(\sqrt{B})}$ with norm $\tilde{D}^2|A^2-4B|$. Fixing the smallest generator $\beta$ of $\mathfrak{d}$ such that $\beta>1$, we see that $\eta$ must be of the form $\beta\epsilon^k$, where $k\geq 0$ is an integer and $\epsilon$ denotes the fundamental unit of $\Q(\sqrt{B})$.
By assumption $1<\eta\ll x+\tilde{D}\ll \exp(N(\log N)^{-\kappa-\tau})$. Thus  there are $O(N(\log N)^{-\kappa-\tau})$ possible $\eta$ given each $\mathfrak{d}$. We claim that any $p\mid \tilde{D}$ satisfies $\leg{B}{p}=1$ or $p\mid B$.
Indeed, if $p\mid \tilde{D}$ and $p\nmid B$,  then it follows from  \eqref{eq:partial2} that 
$g_2\delta$ is a square modulo $p$, which in turn implies that $B$ is a square modulo  $p$, since 
the first paragraph of the proof ensures that $Bg_2\delta$ is a square.
Putting everything together, the total contribution is found to be 
\[\ll N(\log N)^{-\kappa-\tau}\sum_{\substack{\tilde{D}\leq (\log N)^{\kappa}\\p\mid\tilde{D}\Rightarrow \leg{B}{p}\neq -1}}3^{\omega(\tilde{D})}\ll
N(\log N)^{-\tau}(\log\log N)^{\frac{1}{2}},
\]
by Lemma \ref{lemma:Shiu}.
\end{proof}

\subsection{Conclusion}

\begin{proof}[Proof of Theorem~\ref{theorem:partialtors}]
We apply Proposition~\ref{prop:partialgcd} and Lemma~\ref{lemma:p2excep} with $\kappa=\frac{49}{4}$ and $\tau=\frac{1}{8}$. We are then left with the points with $\gcd(x,D)<N(\log N)^{-\kappa}$, which we will handle with Lemma~\ref{lemma:smallgcd}.
Transforming the integral points $(x,y)\in E^*_D(\Z)$ to the integral points $(X,Y)=(9x+3AD,27y)\in\Z^2$ on the short Weierstrass model 
$$
Y^2=X^3+ 27 (3B   -  A^2) x+27A(2 A^2 - 9B),
$$ 
allows us to apply Lemma~\ref{lemma:smallgcd} with $K=9N(\log N)^{-\kappa}$. It follows that
\[
\#\left\{D\in\D^+(N):
\begin{array}{l}
\gcd(x,D)<N(\log N)^{-\kappa}\\\text{for some }(x,y)\in E_D^*(\Z)\end{array}\right\}
\ll N(\log N)^{-\frac{1}{2}\kappa+6}.
\]
This shows that the contribution from those $D\in\D^+(N)$ fits into the upper bound.

For the contribution from $D\in\D^-(N)$, we simply replace $A$ by $-A$ and consider instead $E_D:y^2=x(x^2-ADx+BD^2)$ with $D\in\D^+(N)$.
 \end{proof}

\end{document}